\theoremstyle{plain}
\newtheorem*{maincons}{Main Construction}
\newtheorem{thm}{Theorem}[section]
\newtheorem{lem}[thm]{Lemma}
\newtheorem{cor}[thm]{Corollary}
\newtheorem{cons}[thm]{Construction}
\theoremstyle{remark}
\newtheorem{rem}[thm]{Remark}
\newcommand{\m}{\phantom{-}}
\newcommand{\x}{\negmedspace}
\newcommand{\Z}{\mathbb{Z}}
\newcommand{\R}{\mathbb{R}}
\newcommand{\C}{\mathbb{C}\mkern1mu}
\renewcommand{\H}{\mathbb{H}\mkern1mu}
\newcommand{\Ca}{\mathbb{O}\mkern 1mu}
\newcommand{\CP}{\mathbb{C\mkern1mu P}}
\newcommand{\Sph}{\mathbb{S}}
\DeclareMathOperator{\diag}{diag}
\DeclareMathOperator{\tr}{trace}
\DeclareMathOperator{\id}{id}
\newcommand{\tp}{^{\mathrm{t}}}
\newcommand{\qi}{\mathbf{i}}
\newcommand{\qj}{\mathbf{j}}
\newcommand{\qk}{\mathbf{k}}
\newcommand{\imat}{{\mathchoice
{\mathrm 1\mskip-4.2mu\mathrm l}{\mathrm 1\mskip-4.2mu\mathrm l}
{\mathrm 1\mskip-3.9mu\mathrm l}{\mathrm 1\mskip-4.0mu\mathrm l}}}
\newcommand{\cond}{\;\vert\;}
\newcommand{\bmat}{\left(\begin{smallmatrix}}
\newcommand{\emat}{\end{smallmatrix}\right)}
\newcommand{\bsmat}{\bigl(\begin{smallmatrix}}
\newcommand{\esmat}{\end{smallmatrix}\bigr)}
\newcommand{\Bsmat}{\Bigl(\begin{smallmatrix}}
\newcommand{\Esmat}{\end{smallmatrix}\Bigr)}
\newcommand{\bbsmat}{\biggl(\begin{smallmatrix}}
\newcommand{\eesmat}{\end{smallmatrix}\biggr)}
\newcommand{\BBsmat}{\Biggl(\begin{smallmatrix}}
\newcommand{\EEsmat}{\end{smallmatrix}\Biggr)}
\newcommand{\abs}[1]{\vert #1\vert}
\newcommand{\Abs}[1]{\bigl\vert #1\bigr\vert}
\newcommand{\SO}{\mathrm{SO}}
\newcommand{\SU}{\mathrm{SU}}
\newcommand{\U}{\mathrm{U}}
\newcommand{\Syp}{\mathrm{Sp}}
\newcommand{\Gtwo}{\mathrm{G}_2}
\newcommand{\Spin}{\mathrm{Spin}}
\newcommand{\asyp}{\mathfrak{sp}}
\newcommand{\asu}{\mathfrak{su}}
\newcommand{\aso}{\mathfrak{so}}
\newcommand{\ag}{\mathfrak{g}}
\newcommand{\ah}{\mathfrak{h}}
\newcommand{\ak}{\mathfrak{k}}
\newcommand{\comm}{[\;\cdot\;,\;\cdot\;]}
\DeclareMathOperator{\Real}{Re}
\DeclareMathOperator{\Imag}{Im}
\newcommand{\llangle}{\langle\mkern-3mu\langle}
\newcommand{\rrangle}{\rangle\mkern-3mu\rangle}
\DeclareMathOperator{\power}{\downarrow}
\begin{document}
\title[The commutator of quaternions]{On the commutator of unit quaternions\\ and the numbers $12$ and $24$}
\author{Thomas P\smash{\"u}ttmann}
\address{Ruhr-Universit\"at Bochum\\Fakult\"at f\"ur Mathematik\\
        D-44780 Bochum\\Germany}
\email{Thomas.Puettmann@rub.de}

\begin{abstract}
The quaternions are non-commutative. The deviation from commutativity is encapsulated
in the commutator of unit quaternions. It is known that the $k$-th power of the
commutator is null-homotopic if and only if $k$ is divisible by $12$.
The main purpose of this paper is to construct a concrete null-homotopy of the $12$-th
power of the commutator. Subsequently, we construct free $\mathbb{S}^3$-actions
on $\mathbb{S}^7\times\mathbb{S}^3$ whose quotients are exotic $7$-spheres and
give a geometric explanation for the order 24 of the stable homotopy groups
$\pi_{n+3}(\mathbb{S}^n)$. Intermediate results of perhaps independent interest are a
construction of the octonions emphasizing the inclusion
$\mathrm{SU}(3)\subset \Gtwo$, a detailed study of Duran's geodesic boundary map construction,
and explicit formulas for the characteristic maps of the bundles $\Gtwo\to\Sph^6$ and
$\Spin(7)\to \Sph^7$.
\end{abstract}

\subjclass[2010]{Primary 55Q40; Secondary 53C22, 57T20, 57R55}

\maketitle

%
%

\section{Introduction}
The quaternions form a non-commutative normed division algebra.
Samelson \cite{samelson} and G.\,W.~Whitehead \cite{whitehead} proved that the quaternions are not even homotopically commutative. More precisely, the commutator
\begin{gather*}
  \comm : \Sph^3\times\Sph^3 \to \Sph^3, \quad
  (a,b) \mapsto [a,b] := aba^{-1}b^{-1}
\end{gather*}
of unit quaternions generates the homotopy group $\pi_6(\Sph^3)\approx \Z_{12}$
(see \cite{serre}, \cite{rokhlin}, \cite{james}). It follows that the $k$-th power
\begin{gather*}
  (a,b) \mapsto [a,b]^k =   aba^{-1}b^{-1}aba^{-1}b^{-1}\ldots aba^{-1}b^{-1}
\end{gather*}
is null-homotopic if and only if $k$ is divisible by $12$, and that the map
\begin{gather*}
  (a,b)\mapsto [a,b^k] = ab^k a^{-1}b^{-k}
\end{gather*}
is null-homotopic if and only if $k$ is divisible by $12$ (see \cite{whitehead}).
 
\begin{maincons}
We construct two concrete homotopies
\begin{gather*}
  \Sph^3\times \Sph^3\times [0,1] \to \Sph^3
\end{gather*}
that deform $\comm^{12}$ and $(a,b) \mapsto [a,b^{12}]$, respectively, to the constant map to $1$.
\end{maincons}

\smallskip\enlargethispage{1cm}

The twelve H-space structures on $\Sph^3$ can be represented by the multiplications
$(a,b) \mapsto ab[a,b]^k$ for $k = \{0,\ldots, 11\}$ (see \cite{arkowitz}).
A trivial consequence of our construction is a concrete deformation between
$(a,b) \mapsto ab[a,b]^{12}$ and the standard multiplication on~$\Sph^3$.
Other, less trivial, consequences are based on the following subsequent construction:
Let $\Syp(2)$ denote the group of unitary quaternionic $2\times 2$ matrices and
let $p_{1,2}:\Syp(2) \to \Sph^7$ denote the projection to the first/second column,
respectively. We fix a suitable identification of $\Sph^7$ with the unit octonions.
Let $\power^j$ denote the selfmap of $\Sph^7$ that sends each unit octonion to
its $j$-th power.
\begin{cons}
\label{sptwocons}
Using the null-homotopies of the Main Construction we construct maps
$\chi_j:\Sph^7\to \Syp(2)$ such that $p_1\circ \chi_j = \power^{12j}$.
This yields a concrete identification $\Z \to \pi_7(\Syp(2))$.
\end{cons}

Via the generalized Gromoll-Meyer construction in \cite{dpr} we then obtain concrete exotic free $\Sph^3$-actions on $\Sph^7\times\Sph^3$. The existence of such actions was established by
Hilton and Roitberg in 1968 \cite{hilton}. Our construction owes to the reinvestigation of this
phenomenon in \cite{barros}.

Concretely, let $\chi_{j,2} : \Sph^7 \to \Sph^7$ be an abbreviation for $p_2\circ\chi_j$ and let
$\llangle u,v\rrangle = \bar u\tp v$ denote the standard Hermitian product on the quaternionic vector space $\H^2$.
For a unit quaternion $q \in \Sph^3$, a quaternionic vector $u\in\H^2$, and a unit quaternion $r \in\Sph^3$ set
\begin{gather*}
  q\star_j (u,r) = \bigl( qu\bar q, \llangle \chi_{j,2}(qu\bar q), q \chi_{j,2}(u)\rrangle r\bigr).
\end{gather*}
Note that $q\star_0 (u,r) = \bigl( qu\bar q, qr\bigr)$ since $\chi_0$ is the constant map to the unit matrix in $\Syp(2)$. All other actions $\star_j$ are not isometric with respect to the standard metric on $\Sph^7\times\Sph^3$.
\begin{thm}
\label{exoticaction}
The action $\star_j$ is a free $\Sph^3$-action on $\Sph^7\times\Sph^3$ whose quotient space
is the homotopy $7$-sphere $\Sigma^7_{12j}$ in the notation of \cite{dpr}.
All these homotopy $7$-spheres form a subgroup isomorphic to $\Z_7$ in the group of
orientation preserving diffeomorphism classes of homotopy $7$-spheres
$\Theta_7 \approx \Z_{28}$.
\end{thm}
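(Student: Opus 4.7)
The proof has three parts: $\star_j$ is a group action, it is free, and the quotients $\Sigma^7_{12j}$ form a $\Z_7$-subgroup of $\Theta_7\approx\Z_{28}$.

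\emph{Action.} Write $q\star_j(u,r)=(qu\bar q,\phi_j(u,q)\,r)$ with $\phi_j(u,q)=\llangle\chi_{j,2}(qu\bar q),q\chi_{j,2}(u)\rrangle$. Associativity of $\star_j$ reduces to the cocycle identity $\phi_j(u,q_1q_2)=\phi_j(q_2u\bar{q_2},q_1)\,\phi_j(u,q_2)$. The plan is to establish it using that $\chi_j$ lands in $\Syp(2)$ and that the componentwise $\Sph^3$-conjugation on $\H^2$ is an algebra automorphism of the octonions, so that $p_1\circ\chi_j=\power^{12j}$ is equivariant: $(qu\bar q)^{12j}=qu^{12j}\bar q$. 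The matrices $\chi_j(qu\bar q)$ and $\diag(q,q)\chi_j(u)\diag(\bar q,\bar q)$ then agree in their first column, so
\begin{gather*}
\chi_j(qu\bar q)=\diag(q,q)\,\chi_j(u)\,\diag\bigl(\bar q,\bar q\,\psi_j(u,q)\bigr)
\end{gather*}
for a unique $\psi_j(u,q)\in\Sph^3$. Substituting this into $\phi_j$ and using $\Abs{\chi_{j,2}(u)}=1$ collapses the Hermitian product to $\phi_j(u,q)=\overline{\psi_j(u,q)}\,q$, and the cocycle identity for $\phi_j$ is then equivalent to the one for $\psi_j$, which follows at once by iterating the displayed equivariance and comparing second columns.

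\emph{Freeness and quotient.} If $q\star_j(u,r)=(u,r)$, then $qu\bar q=u$ and $\phi_j(u,q)=1$. Off the exceptional locus where the imaginary parts of $u_1,u_2$ are real-proportional, the stabilizer of $u$ under conjugation is $\{\pm 1\}$; the case $q=-1$ is excluded because the equivariance formula yields $\psi_j(u,-1)=1$, hence $\phi_j(u,-1)=-1$. On the exceptional locus, the explicit second column of $\chi_j(u)$ supplied by Construction~\ref{sptwocons} turns $\phi_j(u,q)=1$ into an equation of the form $\overline{\xi_j(u)}\,q\,\xi_j(u)=1$, again forcing $q=1$ (as in the instructive case $u=e_1$, where $\chi_{j,2}(e_1)=(0,\xi_j)$ for some $\xi_j\in\Sph^3$). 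Once $\star_j$ is recognized as a free $\Sph^3$-action, its formula is by construction the twisted Gromoll-Meyer action of \cite{dpr} associated to $\chi_j$, and the condition $p_1\circ\chi_j=\power^{12j}$ identifies the quotient with $\Sigma^7_{12j}$ in the notation of \cite{dpr}.

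\emph{The subgroup $\Z_7$.} By \cite{dpr}, the assignment $k\mapsto[\Sigma^7_k]$ descends to a group homomorphism $\Z\to\Theta_7\approx\Z_{28}$. The subgroup $\{[\Sigma^7_{12j}]:j\in\Z\}$ is therefore the image of $12\Z$ in $\Z_{28}$, of order $28/\gcd(12,28)=7$, and hence isomorphic to $\Z_7$. The main obstacle in the plan is freeness on the exceptional conjugation locus, since this rests essentially on the concrete form of $\chi_j$ produced by the null-homotopies of the Main Construction.
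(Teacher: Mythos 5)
The paper proves this theorem by a one-line observation that your proposal circles around but never leverages: the formula for $\star_j$ is, by construction, the transport of the Gromoll--Meyer action $q\star(u,v)=(qu\bar q,qv)$ on $E^{10}_{12j}$ across the explicit bundle trivialization
\begin{gather*}
T:\Sph^7\times\Sph^3\to E^{10}_{12j},\qquad (u,r)\mapsto\bigl(u,\chi_{j,2}(u)\,r\bigr),
\end{gather*}
whose inverse is $(u,v)\mapsto\bigl(u,\llangle\chi_{j,2}(u),v\rrangle\bigr)$. That $T$ lands in $E^{10}_{12j}$ uses exactly $p_1\circ\chi_j=\power^{12j}$: since $\chi_j(u)\in\Syp(2)$, the columns $\chi_{j,1}(u)=\power^{12j}(u)$ and $\chi_{j,2}(u)$ are orthonormal, so $\llangle\power^{12j}(u),\chi_{j,2}(u)\,r\rrangle=0$. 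Because $T$ is an equivariant diffeomorphism onto $E^{10}_{12j}$, the action $\star_j=T^{-1}\circ\star\circ T$ is automatically a well-defined, free $\Sph^3$-action with quotient diffeomorphic to $E^{10}_{12j}/\Sph^3=\Sigma^7_{12j}$; one then only needs the cited result of \cite{dpr} plus the arithmetic $\gcd(12,28)=4$ to identify the subgroup as $\Z_{28/4}=\Z_7$. There is nothing to verify about cocycles, stabilizers, or the exceptional conjugation locus.

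Your proposal instead tries to establish the group-action and freeness properties of $\star_j$ directly from the formula. The cocycle argument is essentially correct (the relation $\bar q_1\psi_j(u,q_1q_2)=\psi_j(u,q_2)\,\bar q_1\,\psi_j(q_2u\bar q_2,q_1)$ obtained by iterating your equivariance identity is indeed equivalent, after conjugating and right-multiplying by $q_2$, to the $\phi_j$-cocycle, though calling it ``the cocycle identity for $\psi_j$'' is imprecise --- it is a twisted cocycle). But the freeness step has a genuine gap, which you flag yourself: on the exceptional locus, where the conjugation stabilizer of $u$ is a circle or all of $\Sph^3$, the equation $\phi_j(u,q)=\bar a_1 q a_1+\bar a_2 q a_2=1$ with $(a_1,a_2)=\chi_{j,2}(u)$ is not in general of the form $\bar\xi q\xi=1$ (that shape only appears when one of $a_1,a_2$ vanishes, as at $u=e_1$), and resolving it requires detailed information about $\chi_{j,2}$ along the whole locus. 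Nothing in Construction~\ref{sptwocons} hands you that information in a usable closed form. Since you conclude with ``its formula is by construction the twisted Gromoll--Meyer action of \cite{dpr}'' --- which is precisely the paper's starting point --- the fix is to invert your presentation: begin by exhibiting $\star_j$ as the $T$-conjugate of the free $\star$-action, at which point the well-definedness, freeness, and quotient identification all come for free and the entire cocycle/stabilizer analysis becomes unnecessary.
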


\smallskip

Another application concerns the stable homotopy groups $\pi_{n+3}(\Sph^n)$, $n\ge 5$.
It is well-known (see e.g. \cite{hu}) that these groups are cyclic of order $24$ generated by the suspensions
$\Sigma^{n-4} h$ of the Hopf map
\begin{gather*}
  h : \Sph^7 \to \Sph^4, \quad \bmat u\\ v\emat \mapsto \bmat \abs{u}^2-\abs{v}^2\\
   2\bar u v\emat.
\end{gather*}
Here, $u$, $v$ are quaternions with $\abs{u}^2+\abs{v}^2 = 1$. 

\begin{cons}
Employing the maps $\chi_j :\Sph^7\to \Syp(2)$ we obtain simple
explicit homotopies between $\Sigma (h \circ \power^{12j})$ and its natural homotopy
inverses $\Sigma^{-1}(h\circ \power^{12j})$ given by reversing the direction of the suspension coordinate.
\end{cons}
In fact, this construction is short enough to perform it right away: The map
\begin{gather*}
  H_j : \Sph^7 \times [0,\tfrac{\pi}{2}] \to \Sph^7, \quad
  H_j (x,t) = \cos t \cdot \chi_{j,1}(x) + \sin t \cdot \chi_{j,2}(x).
\end{gather*}
yields a well-defined homotopy between $\chi_{j,1} = \power^{12j}$ and $\chi_{j,2}$
since corresponding values of $\chi_{j,1}$ and $\chi_{j,2}$ are always perpendicular in $\H^2$.
Now,
\begin{gather}
\label{sptwoid}
  h\bmat c\\ d\emat = \bmat \abs{c}^2 - \abs{d}^2\\ 2 c\bar d\emat
   = \bmat \abs{b}^2 - \abs{a}^2\\ -2a\bar b\emat = - h\bmat a\\ b\emat \quad
   \text{if } \bmat a & c \\ b & d\emat \in \Syp(2).
\end{gather}
Hence, $h \circ H_j$ is a homotopy between $h \circ \power^{12j}$ and $-h\circ\power^{12j}$.
This homotopy induces a homotopy $\Sigma(h \circ H_j)$ between the suspensions
$\Sigma(h\circ \power^{12j})$ and $\Sigma(-h\circ \power^{12j})$.
Now, $\id_{\Sph^5}$ is homotopic to $-\id_{\Sph^5}$ by a block matrix that consists of three
$2\times 2$ rotation matrices. The concatenation of the homotopies deforms
$\Sigma(h\circ \power^{12j}):\Sph^8\to\Sph^5$ to the map
\begin{gather*}
  -\id\circ\Sigma(-h\circ \power^{12j}) = \Sigma^{-1}(h\circ \power^{12j}).
\end{gather*}

\smallskip

The organization of the paper is explained at the end of the next section, which provides
an outline of our Main Construction.

\bigskip

\section{Outline of the Main Construction}
The rough combinatorics of our construction is perhaps not surprising: $12 = 3 \cdot 2 \cdot 2$
where the factor $3$ is geometrically related to the order of the homotopy group $\pi_6(\Gtwo)$,
one factor $2$ comes from killing the double of the fourth suspension of the
Hopf fibration $\Sph^3\to \Sph^2$, and the other factor $2$ from killing the double of the
single suspension of the Hopf fibration $\Sph^3\to \Sph^2$.
The detailed deformations, however, are far from beeing obvious.

\smallskip

The crucial geometric tool is a particular form of the boundary map in the exact homotopy
sequence of fiber bundles. This particular form was first used by Duran~\cite{duran}.
Given a fiber bundle $F\cdots E \to B$ and a map $\alpha: \Sph^k \to B$ one uses
horizontal lifts to define a concrete map $\partial_{E\to B}(\alpha) : \Sph^{k-1}\to F$
that induces the boundary map $\pi_k(B)\to \pi_{k-1}(F)$.
We investigate this construction in connection with suspensions of maps and
powers of spheres and obtain some statements that are specific to this way of lifting.

\smallskip

We now outline our Main Construction. The first step is rather elementary.
\begin{cons}
\label{homeq}
We construct a continuous map $\mu:\Sph^3\times \Sph^3 \to \Sph^6$ and
two homotopies $\Sph^3\times\Sph^3\times [0,1]\to \Sph^3$ that deform
$\comm^{12}$ and $(a,b) \mapsto [a,b^{12}]$, respectively, to the composition
$\bigl(\partial_{\Syp(2)\to\Sph^7}(\id)\bigr)^{12} \circ \mu$.
\end{cons}
Thus it remains to construct a null-homotopy of the $12$-th power of the characteristic map
$\partial_{\Syp(2)\to\Sph^7}(\id)$.
Note that a homotopic rational version of this characteristic map was proven to generate
$\pi_6(\Sph^3)$ by Borel and Serre \cite{serre}.

\smallskip

The central second step is an application of our results about
the horizontal lifting construction in section\,\ref{lifts}.
We apply them to the commutative diagram
\begin{gather*}
  \begin{CD}
    \Sph^3 @>>> \Gtwo @>>>  V_{7,2} \\
     @VVV    @VVV @|\\
    \Syp(2) @>>> \Spin(7) @>>> V_{7,2}\\
    @VVV     @VV{\pi}V\\
    \Sph^7 @= \Sph^7
  \end{CD}
\end{gather*}
Identify $\Sph^7$ with the unit sphere in the octonions $\Ca$ and let $V_{7,2}$
denote the Stiefel manifold of orthonormal $2$-frames in $\R^7 \approx \Imag\Ca$.
Let $e_1, e_2$ denote a fixed orthonormal $2$-frame in $\R^7 \approx \Imag\Ca$.
Set
\begin{gather*}
  \kappa : \Sph^7 \to V_{7,2}, \quad
  a \mapsto (\bar a e_1 a, \bar a e_2 a).
\end{gather*}

\begin{thm}
\label{boundarymaps}
We have
\begin{gather*}
  \bigl(\partial_{\Syp(2)\to\Sph^7}(\id)\bigr)^6
  = \partial_{\Gtwo\to V_{7,2}}(\kappa\circ \power^2),
\end{gather*}
where $\power^2$ denotes the octonionic squaring map.
\end{thm}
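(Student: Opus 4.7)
The plan is to use the commutative diagram to transfer the right-hand side into a computation along the middle row $\Syp(2)\to\Spin(7)\to V_{7,2}$ and then to identify the result with the sixth power along the left column.

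First, by the naturality of Duran's horizontal-lift construction relative to the bundle morphism from the top row $\Sph^3\to\Gtwo\to V_{7,2}$ to the middle row $\Syp(2)\to\Spin(7)\to V_{7,2}$ (encoded by the leftmost two vertical arrows, which cover the identity on $V_{7,2}$), one obtains $\iota\circ\partial_{\Gtwo\to V_{7,2}}(\kappa\circ\power^2)=\partial_{\Spin(7)\to V_{7,2}}(\kappa\circ\power^2)$, where $\iota\colon\Sph^3\hookrightarrow\Syp(2)$ is the leftmost vertical map. In particular, the right-hand side lands in $\iota(\Sph^3)\subset\Syp(2)$, and it remains to compute $\partial_{\Spin(7)\to V_{7,2}}(\kappa\circ\power^2)$ and, after projection back to $\Sph^3$, to compare it with $(\partial_{\Syp(2)\to\Sph^7}(\id))^6$.

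Second, I would invoke the power-map formula from section\,\ref{lifts}: since $\power^2$ has degree~$2$ on $\Sph^7$, the horizontal lift of $\kappa\circ\power^2$ through the middle row is a two-fold iterate of the horizontal lift of $\kappa$, contributing a multiplicative factor of $2$ in the fiber. It then remains to identify this iterate, projected into the sub-fiber $\Sph^3$, with the cube of $c:=\partial_{\Syp(2)\to\Sph^7}(\id)$. This factor of $3$ is the one foreshadowed in the outline of section~2 and reflects the fact that $\kappa$, when lifted through the middle row and compared with a horizontal lift of the identity through the left column, picks up the generator of $\pi_6(\Gtwo)\approx\Z_3$, which itself projects to three copies of $c$ via the column inclusion.

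The main obstacle I expect is verifying the factor of $3$ at the map level rather than merely in homotopy. This requires a concrete computation of the horizontal lift of $\kappa$ using the octonionic identities and the $\SU(3)\subset\Gtwo$ construction emphasized in the abstract, together with a careful choice of compatible Riemannian metrics so that Duran's horizontal distributions on $\Sph^7=\Spin(7)/\Gtwo$ and $\Sph^7=\Syp(2)/\Sph^3$ align as required for the naturality in the first step and the iterate comparison in the second to hold on the nose.
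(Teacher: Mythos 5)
Your Step~1 rests on a naturality claim that is not justified and, as stated, not the mechanism the paper uses. You write that post-composing $\partial_{\Gtwo\to V_{7,2}}(\kappa\circ\power^2)$ with the inclusion $\iota\colon\Sph^3\hookrightarrow\Syp(2)$ yields $\partial_{\Spin(7)\to V_{7,2}}(\kappa\circ\power^2)$ because the bundle morphism covers the identity on $V_{7,2}$. But Duran's $\partial$ is built from \emph{horizontal} lifts, and horizontality is a metric (Ehresmann connection) condition; a bundle morphism covering the identity on the base does not automatically carry one horizontal distribution into the other. For that you would need the orthogonal complement of $\asu(2)$ inside $\ag_2$ to lie in the orthogonal complement of $\asyp(2)$ inside $\spin(7)$, and nothing in your write-up verifies this. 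The paper sidesteps this entirely: it does not compare horizontal lifts with respect to the two projections onto $V_{7,2}$ at all.

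The actual engine is Lemma~\ref{doublehor} applied to the left \emph{column} $\Syp(2)\subset\Spin(7)\to\Sph^7$ of the diagram, not the top and middle rows. One lifts the geodesics $\gamma_v$ of $\Sph^7$ horizontally once into $\Spin(7)$ and once into $\Syp(2)$ (both with respect to the fibrations over $\Sph^7$), and Lemma~\ref{doublehor} asserts that the quotient curve $\tilde\delta_v(t)=\tilde\gamma_v^{\Spin(7)}(t)^{-1}\cdot\tilde\gamma_v^{\Syp(2)}(t)$ lies in $\Gtwo$ and is horizontal for $\Gtwo\to V_{7,2}$. The explicit formula of Lemma~\ref{spinlift} gives $\tilde\gamma_v^{\Spin(7)}(3t)=(C_{\gamma_v(t)},\dots)$, whence $\tilde\gamma_v^{\Spin(7)}(6\pi)=\imat$ and $\tilde\delta_v(6\pi)=\tilde\gamma_v^{\Syp(2)}(6\pi)=(\partial_{\Syp(2)\to\Sph^7}(\id)(v))^6$. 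The same formula shows that the projected curve is $\tilde\delta_v(6t)\cdot e_i=\overline{\gamma_v(2t)}\,e_i\,\gamma_v(2t)$, i.e.\ $\kappa\circ\power^2\circ\gamma_v(t)$. This is where $\power^2$ enters: it is an artifact of the reparametrization $t\mapsto 3t$ built into the $\Spin(7)$ lift, not a ``degree~$2$ contributes a factor~$2$'' argument. Your Step~2 misapplies Corollary~\ref{powerid}: that identity concerns $\partial(\power^k)$ for $G\to\Sph^n$ and does not transport to a map of the form $\kappa\circ\power^2$ into a Stiefel manifold, because $\power^2$ acts on the source sphere, not on the base. Likewise, the ``factor of $3$'' you invoke is never produced by a separate homotopy-group argument; the exponent $6$ appears in one stroke as the period of $\tilde\gamma_v^{\Spin(7)}$. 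The missing idea is precisely Lemma~\ref{doublehor} together with the concrete Moufang-identity lift of Lemma~\ref{spinlift}.
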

This identity is obtained from the identity
$\bigl(\partial_{\Spin(7)\to\Sph^7}(\id)\bigr)^6 = \imat$,
which in turn is intimately related to the geometric presentation of $\pi_6(\Gtwo)\approx \Z_3$
discovered by Chaves and Rigas \cite{rigas}.

\smallskip

The third step is the most technical one. Toda, Saito, and Yokota \cite{saito}
showed that the map $\kappa$ generates the homotopy group $\pi_7(V_{7,2}) \approx \Z_4$
(see also \cite{rigchav} for relations between $\kappa$ and certain Hopf maps).
Note that the exact homotopy sequence of the fibration $\Sph^5\to V_{7,2}\to\Sph^6$
contains the part
\begin{gather*}
  \Z_2 \approx \pi_7(\Sph^5) \to \pi_7(V_{7,2}) \to \pi_7(\Sph^6) \approx \Z_2.
\end{gather*}
Thus, the first column of the map $\kappa\circ\power^2$ is null-homotopic.

\begin{cons}
\label{kappadeform}
We construct a concrete homotopy
\begin{gather*}
  \kappa\circ \power^2 \sim (N,\Sigma^2\tau).
\end{gather*}
Here, $N$ denotes the constant map from $\Sph^7$ to the northpole $e_1$ of $\Sph^6$ and
$\Sigma^2\tau$ is the double suspension of a specific map $\tau: \Sph^5 \to \Sph^3$.
\end{cons}

The construction of this homotopy involves two steps. We first recognize that
the map $\kappa$ actually consists of two perpendicular variants of the fibration
$\Sph^7 \to \CP^3$ composed with the cut locus collapse $\CP^3\to \Sph^6$.
We deform $\kappa$ concretely to a map $h$ that consists of two perpendicular variants
of the fourth suspension of the Hopf fibration $h_1: \Sph^3 \to \Sph^2$.
In the second step we deform $h\circ \power^2$ to the map $(N,\Sigma^2\tau)$.
It is essential here that the second column is the suspension of a map.
The precise form of $\tau$ is actually not important, yet an explicit formula for this map
could be obtained.

\begin{cons}
\label{homlift}
Lifting the deformation curves of the homotopy in Construction\,\ref{kappadeform}
horizontally yields a homotopy
\begin{gather*}
  \partial_{\Gtwo\to V_{7,2}}(\kappa\circ \power^2)
 \sim A_0\cdot \partial_{\Gtwo\to V_{7,2}}(N,\Sigma^2\tau)
\end{gather*}
where $A_0$ is a specific matrix in $\SU(3)\subset \Gtwo$ that identifies the fiber
over the point $(e_1,-e_2)$ with the fiber over the point $(e_1,e_2)$.
\end{cons}

For the intuition of the reader we now supply the following commutative diagram
\begin{gather*}
  \begin{CD}
    \SU(2) @= \Sph^3 \\
     @VVV    @VVV\\
    \SU(3) @>>> \Gtwo @>>> \Sph^6\\
    @VVV     @VV{\pi}V @|\\
    \Sph^5 @>>> V_{7,2} @>>> \Sph^6.
  \end{CD}
\end{gather*}

\begin{lem}
\begin{gather*}
  \partial_{\Gtwo\to V_{7,2}}(N,\Sigma^2\tau)
  = \partial_{\SU(3)\to \Sph^5}(\Sigma^2\tau).
\end{gather*}
\end{lem}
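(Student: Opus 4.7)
The key observation is that the map $(N,\Sigma^2\tau):\Sph^7\to V_{7,2}$ takes values in the subspace $\{e_1\}\times \Sph^5$, which is precisely the fiber $\pi^{-1}(e_1)$ of $\pi:V_{7,2}\to\Sph^6$. Under the identification of $\SU(3)\subset\Gtwo$ with the stabilizer of $e_1$, this restriction identifies the principal $\Sph^3$-bundle $\Gtwo\to V_{7,2}$ over $\pi^{-1}(e_1)$ with the principal $\Sph^3$-bundle $\SU(3)\to \Sph^5$ on the left column of the diagram. My plan is therefore to show that for each $x\in\Sph^6$, the path in $V_{7,2}$ that Duran's construction extracts from $(N,\Sigma^2\tau)$ stays inside this $\Sph^5$-fiber, and that its $\Gtwo$-horizontal lift stays inside $\SU(3)\subset\Gtwo$ and agrees there with the $\SU(3)$-horizontal lift of the same path viewed as a curve in $\Sph^5$. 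Equality of the two boundary maps then follows point by point on $\Sph^6$.

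The reduction to a Lie-algebra statement goes as follows. Equip $\Gtwo$ with a bi-invariant metric and $\SU(3)$ with its restriction. Since $\Sph^6=\Gtwo/\SU(3)$ is a symmetric space, one has the orthogonal reductive decomposition $\ag_2=\asu(3)\oplus\mathfrak{m}$ with $\mathfrak{m}\cong T_{e_1}\Sph^6\cong e_1^\perp\subset\Imag\Ca$. Using the chain $\asu(2)\subset\asu(3)\subset\ag_2$, the horizontal space at $\imat$ for $\Gtwo\to V_{7,2}$ decomposes orthogonally as
\[
  \ag_2\ominus\asu(2) \;=\; \bigl(\asu(3)\ominus\asu(2)\bigr)\oplus\mathfrak{m}.
\]
Under the derivative of $\Gtwo\to V_{7,2}$ at $\imat$, the summand $\mathfrak{m}$ maps isomorphically onto those tangent vectors at $(e_1,e_2)$ that move the first frame vector, while $\asu(3)\ominus\asu(2)$ maps isomorphically onto $T_{(e_1,e_2)}\Sph^5$ inside $T_{(e_1,e_2)}V_{7,2}$. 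Hence any vector tangent to the $\Sph^5$-fiber has its unique horizontal preimage inside $\asu(3)\ominus\asu(2)$, which is simultaneously the horizontal space at $\imat$ for $\SU(3)\to\Sph^5$.

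By left invariance this matching of horizontal spaces persists along the entire $\SU(3)$-subbundle, so the ODE defining the $\Gtwo$-horizontal lift of a curve inside $\Sph^5\subset V_{7,2}$ starting at $\imat$ coincides with the ODE defining its $\SU(3)$-horizontal lift. Consequently the two lifts are equal, and each path produced by Duran's construction for $(N,\Sigma^2\tau)$ yields the same endpoint in $\Sph^3$ whether computed in $\Gtwo\to V_{7,2}$ or in $\SU(3)\to\Sph^5$. I expect the only mildly delicate point to be giving a clean, self-contained verification that the $\mathfrak{m}$-component of the horizontal preimage vanishes as soon as the downstairs vector is tangent to the $\Sph^5$-fiber; once this is in hand the rest of the argument is a short diagram chase.
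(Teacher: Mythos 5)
Your argument is correct and matches the paper's reasoning: the paper does not spell out this lemma separately but relies on exactly the same observation in Section~\ref{kappatwo}, where it notes that the horizontal lifts of the curves produced by $\alpha_1 = (N,\Sigma^2\tau)$ stay inside $\SU(3)\subset\Gtwo$ ``since the first column is constant.'' Your decomposition $\ag_2\ominus\asu(2) = (\asu(3)\ominus\asu(2))\oplus\mathfrak{m}$, together with the observation that any downstairs vector tangent to $\pi^{-1}(e_1)$ has $X(e_1)=0$ and hence lies in $\asu(3)\ominus\asu(2)$, is precisely the Lie-algebra justification of that claim (the only wrinkle being that $\mathfrak{m}$ need not map to vectors moving \emph{only} the first frame vector, but your argument only uses that it meets the fiber tangent space trivially, which is correct).
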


We then apply the Eckmann-Kervaire identity
$\partial(\alpha\circ \Sigma\beta)  = \partial(\alpha)\circ\beta$, which also holds for the
specific way of lifting that we use in this paper (see Lemma\,\ref{susplift}).
\begin{lem}
\begin{gather*}
  \partial_{\SU(3)\to \Sph^5}(\Sigma^2\tau)
  = \partial_{\SU(3)\to\Sph^5}(\id_{\Sph^5}) \circ\Sigma\tau.
\end{gather*}
\end{lem}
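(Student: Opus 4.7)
The plan is to obtain this identity as a direct instance of the Eckmann--Kervaire formula
\[
  \partial(\alpha \circ \Sigma\beta) = \partial(\alpha) \circ \beta,
\]
which (for the horizontal-lift version of the boundary map used throughout the paper) has already been announced as Lemma~\ref{susplift}. Accordingly, I would carry out three short steps. First, specialize to the fibration $\SU(2)\to\SU(3)\to\Sph^5$ and make the substitution $\alpha = \id_{\Sph^5}$ and $\beta = \Sigma\tau$. Second, rewrite the left-hand side of Lemma~\ref{susplift}: since $\tau:\Sph^5\to\Sph^3$ gives $\beta=\Sigma\tau:\Sph^6\to\Sph^4$ and $\Sigma\beta = \Sigma^2\tau:\Sph^7\to\Sph^5$, the composition $\alpha\circ \Sigma\beta$ is literally $\id_{\Sph^5}\circ \Sigma^2\tau = \Sigma^2\tau$. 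Third, read off the right-hand side: $\partial(\alpha)\circ\beta = \partial_{\SU(3)\to\Sph^5}(\id_{\Sph^5})\circ \Sigma\tau$, which is exactly the asserted formula.

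Before invoking Lemma~\ref{susplift}, I would briefly confirm that the dimensions line up, since the statement routes maps through several spheres. The left-hand side $\partial_{\SU(3)\to\Sph^5}(\Sigma^2\tau)$ is a map $\Sph^6 \to \SU(2) = \Sph^3$, while the right-hand side factors as $\Sph^6 \xrightarrow{\Sigma\tau} \Sph^4 \xrightarrow{\partial(\id_{\Sph^5})} \Sph^3$, so both sides have the same source $\Sph^6$ and the same target $\Sph^3$, and no further reindexing is required.

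The main obstacle is already absorbed into Lemma~\ref{susplift}, so no new work is needed at this point. For orientation I would sketch the geometric content: parametrizing $\Sph^{k+1}$ as the unreduced suspension of $\Sph^k$, the horizontal lift used to define $\partial(\Sigma^2\tau)$ sweeps, over each $x \in \Sph^6$, the meridian of $\Sph^5$ through $\Sigma\tau(x)\in \Sph^4\subset\Sph^5$; by definition this is the same meridian whose horizontal lift computes $\partial(\id_{\Sph^5})$ at the point $\Sigma\tau(x)$. Making this identification precise is exactly the content of Lemma~\ref{susplift}; with that lemma in hand, the proof of the present identity reduces to the one-line substitution above.
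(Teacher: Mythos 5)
Your proposal is correct and follows exactly the route the paper intends: the lemma is a direct application of Lemma~\ref{susplift} with $\alpha = \id_{\Sph^5}$ and $\beta = \Sigma\tau$, and your dimension bookkeeping is sound.
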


The map $\partial_{\SU(3)\to\Sph^5}(\id)$ is the suspension of the
Hopf fibration $\Sph^3 \to \Sph^2$. Null-homotopies of twice this map
are classical. The simplest null-homotopy $H_{\SU(3)\to\Sph^5}$ of
$\bigl(A_0 \cdot \partial_{\SU(3)\to\Sph^5}(\id)\bigr)^2$ in precisely this
form is given in \cite{unitary} (see Theorem\,\ref{suhom}).

\begin{thm}
\label{finalthm}
The concatenation of the null-homotopy $H_{\SU(3)\to\Sph^5}$
with the homotopies of the previous statements provides a null-homotopy
\begin{gather*}
  \bigl(\partial_{\Syp(2)\to\Sph^7}(\id)\bigr)^{12}
  \sim \bigl(A_0\cdot \partial_{\SU(3)\to\Sph^5}(\id)\bigr)^2 \circ\Sigma\tau \sim 1.
\end{gather*}
\end{thm}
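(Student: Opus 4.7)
The proof is essentially a bookkeeping exercise: all the substantive constructions have been carried out in the preceding statements, and what remains is to chain them together while tracking pointwise powers in the fiber $\Sph^3$ at the appropriate places.

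My plan is as follows. First I would square the identity of Theorem~\ref{boundarymaps} pointwise in the group $\Sph^3$ to obtain
\[
 \bigl(\partial_{\Syp(2)\to\Sph^7}(\id)\bigr)^{12}
 = \bigl(\partial_{\Gtwo\to V_{7,2}}(\kappa \circ \power^2)\bigr)^2
\]
as an equality of maps $\Sph^6 \to \Sph^3$. Next I would apply Construction~\ref{homlift}, squaring the resulting homotopy pointwise in $\Sph^3$ (i.e.\ replacing a homotopy $H(x,t)$ by $H(x,t) \cdot H(x,t)$), to deform this to $\bigl(A_0 \cdot \partial_{\Gtwo\to V_{7,2}}(N,\Sigma^2\tau)\bigr)^2$. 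I would then invoke the two lemmas that follow Construction~\ref{homlift}: the first is an equality replacing $\partial_{\Gtwo\to V_{7,2}}(N,\Sigma^2\tau)$ by $\partial_{\SU(3)\to\Sph^5}(\Sigma^2\tau)$, and the second (Eckmann--Kervaire) rewrites this as $\partial_{\SU(3)\to\Sph^5}(\id_{\Sph^5}) \circ \Sigma\tau$. Since pointwise powers in $\Sph^3$ commute with precomposition on the right, $(f \circ g)^2 = f^2 \circ g$, the combined effect is
\[
 \bigl(A_0 \cdot \partial_{\Gtwo\to V_{7,2}}(N,\Sigma^2\tau)\bigr)^2
 = \bigl(A_0 \cdot \partial_{\SU(3)\to\Sph^5}(\id_{\Sph^5})\bigr)^2 \circ \Sigma\tau.
\]

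Finally I would append the null-homotopy $H_{\SU(3)\to\Sph^5}$ of $\bigl(A_0 \cdot \partial_{\SU(3)\to\Sph^5}(\id)\bigr)^2$ provided by Theorem~\ref{suhom}, composed on the right with $\Sigma\tau$. Concatenating all the homotopies produced above then yields the desired null-homotopy of $\bigl(\partial_{\Syp(2)\to\Sph^7}(\id)\bigr)^{12}$. The only point requiring real care — and the closest thing to an obstacle — is verifying that the pointwise squaring in $\Sph^3$ is compatible with the prefactor $A_0$, which encodes the identification of the fibers of $\Gtwo\to V_{7,2}$ over $(e_1, e_2)$ and $(e_1, -e_2)$; once this convention is consistently tracked across all the intermediate fiber identifications, every step either factors through the fiber multiplication in $\Sph^3$ or through precomposition on the right, and the concatenation goes through in a direct and completely formal manner.
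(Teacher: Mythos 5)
Your proposal is correct and matches what the paper does: the theorem is proved by concatenating the homotopies from Theorem~\ref{boundarymaps}, Construction~\ref{homlift}, the two intermediate lemmas, and Theorem~\ref{suhom}, using the pointwise group structure of the fiber $\Sph^3\subset\SU(3)$ to square the homotopy of Construction~\ref{homlift}, and the observation that $(f\circ g)^2 = f^2\circ g$ to pull $\Sigma\tau$ outside the square. The ``care with $A_0$'' you flag is handled automatically, since the square of $A_0\cdot\partial_{\SU(3)\to\Sph^5}(\id)$ is taken in $\SU(3)$ and Theorem~\ref{suhom} null-homotopes exactly that expression.
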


The rest of the paper is organized as follows: In section\,\ref{oct} we provide a definition
for the octonionic multiplication based on the complex cross product.
For our concrete computations in the following sections this definition is much
more convenient then the standard definition.
In section\,\ref{lifts} we study the interplay of suspensions, powers of spheres and Duran's
form of the boundary map in the exact homotopy sequence. In section\,\ref{examples} we compute
the characteristic maps of bundles that belong to some transitive actions on spheres
using horizontal lifts.
Moreover, we use the results of section\,\ref{lifts} to prove Theorem\,\ref{boundarymaps}.
In section\,\ref{Scommutator} we perform Construction\,\ref{homeq} and in section\,\ref{kappatwo}
we perform Construction\,\ref{kappadeform} and Construction\,\ref{homlift}.
In section\,\ref{exotic} we perform the Construction\,\ref{sptwocons} and
Theorem\,\ref{exoticaction}.

\bigskip

\section{Octonionic multiplication via the complex cross product}
\label{oct}
We define the {\em complex cross product} of two vectors $z,w\in\C^3$ by
\begin{gather*}
  z\times w = \Bsmat \bar z_2\bar w_3 - \bar z_3\bar w_2\\
    \bar z_3\bar w_1 - \bar z_1\bar w_3\\
    \bar z_1\bar w_2 - \bar z_2\bar w_1\Esmat.
\end{gather*}
Let $\llangle z,w\rrangle = \bar z_1 w_1 + \bar z_2 w_2 + \bar z_3 w_3$ denote
the hermitian inner product on $\C^3$.
Given two vectors $\bsmat z_0\\ z\esmat$, $\bsmat w_0\\ w\esmat \in \C\times \C^3$
we set
\begin{gather*}
  \bsmat z_0\\ z\esmat \cdot \bsmat w_0\\ w\esmat
    := \bsmat z_0 w_0 - \llangle z,w\rrangle \\ \bar z_0 w + w_0 z + z \times w\esmat.
\end{gather*}

\begin{thm}
\label{octonionicmultiplication}
With this product, $\C\times \C^3$ is isomorphic to the octonions.
\end{thm}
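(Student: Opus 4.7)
The strategy is to verify that $\C\times\C^3$, equipped with the given product and the standard Euclidean norm $|(z_0,z)|^2 := |z_0|^2 + |z|^2$ (where $|z|^2 = \llangle z,z\rrangle$), is a real normed composition algebra of dimension $8$ with two-sided identity. By Hurwitz's classification theorem, the only such algebras over $\R$ are $\R$, $\C$, $\H$, and $\Ca$, so the dimension count then forces an isomorphism with the octonions.

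The routine checks come first. The product formula is manifestly $\R$-bilinear (but not $\C$-bilinear, because of the conjugate $\bar z_0$ and the conjugates built into the cross product), and direct substitution shows that $(1,0)$ is a two-sided identity. Defining a conjugation by $\overline{(z_0,z)} := (\bar z_0, -z)$, both $x\bar x$ and $\bar x x$ evaluate to $(|x|^2, 0)$, using only that $z\times z = 0$, which is immediate from the antisymmetry of the cross product formula in its two arguments.

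The heart of the argument is multiplicativity of the norm, and this reduces to a Lagrange-type identity for the complex cross product:
\begin{equation*}
  |\llangle z,w\rrangle|^2 + |z\times w|^2 = |z|^2 |w|^2.
\end{equation*}
I would verify this by componentwise expansion: the six off-diagonal terms $\bar z_i z_j w_i \bar w_j$ in $|\llangle z,w\rrangle|^2$ coincide (after using commutativity of scalar multiplication) with the six mixed terms appearing with a minus sign in $|z\times w|^2$, and they cancel exactly, leaving the symmetric sum $\sum_{i,j}|z_i|^2|w_j|^2 = |z|^2|w|^2$.

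Granted this identity, norm multiplicativity of the full product is a direct expansion. Two cancellations make it work cleanly. First, the hermitian orthogonalities $\llangle z, z\times w\rrangle = \llangle w, z\times w\rrangle = 0$, again immediate from antisymmetry, eliminate every mixed term involving $z\times w$ in $|\bar z_0 w + w_0 z + z\times w|^2$. Second, the surviving cross term $2\Real\llangle \bar z_0 w, w_0 z\rrangle = 2\Real(z_0 w_0\,\overline{\llangle z,w\rrangle})$ cancels precisely with $-2\Real(\bar z_0 \bar w_0 \llangle z,w\rrangle)$ arising from $|z_0 w_0 - \llangle z,w\rrangle|^2$. The placement of $\bar z_0$ (rather than $z_0$) in the second component of the defining formula is exactly what makes this second cancellation work; this is essentially the only delicate point in the proof. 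Once norm multiplicativity holds, zero divisors are automatically ruled out and Hurwitz's theorem concludes the argument.
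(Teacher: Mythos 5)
Your proposal is correct and follows essentially the same path as the paper: both establish norm multiplicativity via the Lagrange-type identity $|z\times w|^2 = |z|^2|w|^2 - |\llangle z,w\rrangle|^2$ (the paper's Lemma~\ref{crossproductnorm}) and then appeal to the classification of normed division/composition algebras to conclude. The paper additionally multiplies the equation $a\cdot x = w$ on the left by the conjugate $\overline{a} = (\bar a_0, -a)$ to exhibit the inverse explicitly, whereas you extract invertibility as a formal consequence of norm multiplicativity plus $x\bar x = \bar x x = |x|^2$; this is the same computation packaged slightly differently and not a genuinely distinct argument.
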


The proof will be a consequence of the following properties of the complex cross product, all of which can be verified easily.
\begin{lem}
For all $y,z,w \in \C^3$ we have
\begin{gather*}
  \llangle z, z\times w\rrangle = 0 \quad \text{and} \quad
  y \times (z\times w) = \llangle y,w\rrangle z - \llangle y,z\rrangle w.
\end{gather*}
\end{lem}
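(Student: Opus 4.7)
Both identities are coordinate computations; I would verify them by direct expansion rather than by any abstract identity-chase.

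For the orthogonality $\llangle z, z\times w\rrangle = 0$, expanding using the definition of the Hermitian product and the explicit formula for $z\times w$ gives
\begin{gather*}
  \bar z_1(\bar z_2\bar w_3 - \bar z_3\bar w_2) + \bar z_2(\bar z_3\bar w_1 - \bar z_1\bar w_3) + \bar z_3(\bar z_1\bar w_2 - \bar z_2\bar w_1).
\end{gather*}
After distributing, the six triple-product monomials cancel in pairs, so the sum is zero. (The same argument of course shows $\llangle w, z\times w\rrangle = 0$.)

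For the triple-product identity, the key bookkeeping observation is that the outer cross product $y\times(\,\cdot\,)$ conjugates the entries of its second argument, and $z\times w$ already carries conjugates on $z$ and $w$; hence $y\times(z\times w)$ depends on $\bar y$, $z$, and $w$, which matches the dependence of $\llangle y,w\rrangle z - \llangle y,z\rrangle w$ on the right hand side. Concretely, I would compute the first component:
\begin{gather*}
  \bar y_2\bigl(\overline{\bar z_1\bar w_2 - \bar z_2\bar w_1}\bigr) - \bar y_3\bigl(\overline{\bar z_3\bar w_1 - \bar z_1\bar w_3}\bigr)
  = z_1(\bar y_2 w_2 + \bar y_3 w_3) - w_1(\bar y_2 z_2 + \bar y_3 z_3).
\end{gather*}
Adding and subtracting $\bar y_1 w_1 z_1$ completes the two Hermitian sums to $z_1\llangle y,w\rrangle - w_1\llangle y,z\rrangle$, which is the first component of the right hand side. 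The second and third components follow by cyclically permuting the indices $(1,2,3)$, since the cross product is defined by such a cyclic shift of the pattern $\bar z_i\bar w_j - \bar z_j\bar w_i$.

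The main obstacle is purely notational: one must track where conjugates appear and disappear, given that the cross product is conjugate-linear in each slot while $\llangle\,\cdot\,,\,\cdot\,\rrangle$ is conjugate-linear only in the first slot. Consequently, a standard trilinearity argument is unavailable, and the cyclic-symmetry shortcut described above is the chief economy over a brute computation of all three components.
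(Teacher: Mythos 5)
Your proof is correct, and it matches the paper's (implicit) approach: the paper simply asserts that the lemma "can be verified easily," i.e.\ by direct coordinate computation, which is what you carry out. Your cyclic-symmetry observation is a sound economy, since both sides of the triple-product identity are indeed invariant under simultaneous cyclic permutation of the index $(1,2,3)$.
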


\begin{lem}
If $z,w \in \C^3$ are unit vectors with $\llangle z,w\rrangle = 0$  then $z\times w$
is the unique vector in $\C^3$ such that the complex $3\times 3$-matrix
$(z,w,z\times w)$ is contained in $\SU(3)$.
\end{lem}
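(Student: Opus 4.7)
The plan is to verify that $M := (z, w, z\times w)$ meets the three requirements for membership in $\SU(3)$: pairwise Hermitian-orthogonal columns, each column of unit norm, and $\det M = 1$. Orthogonality is almost immediate. The previous lemma gives $\llangle z, z\times w\rrangle = 0$. Because the defining formula is antisymmetric in its two arguments ($w\times z = -z\times w$, clear from inspection), applying the same identity with the roles of $z$ and $w$ swapped yields $\llangle w, z\times w\rrangle = -\llangle w, w\times z\rrangle = 0$ as well.

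The key auxiliary step I would establish is the determinant identity
\begin{gather*}
  \llangle v, z \times w\rrangle = \overline{\det(v, z, w)} \qquad \text{for all } v \in \C^3,
\end{gather*}
which is just the cofactor expansion of the right-hand side along its first row, rewritten through the defining formula of the cross product. Applied with $v = z$ and with the second slot of the cross product on the left replaced by $z\times w$, it gives
\begin{gather*}
  \overline{\det(z, w, z\times w)} = \llangle z, w\times(z\times w)\rrangle.
\end{gather*}
The second identity of the previous lemma (with $y = w$) simplifies $w\times(z\times w) = \llangle w,w\rrangle z - \llangle w,z\rrangle w$ to $z$, so the right-hand side equals $\llangle z, z\rrangle = 1$ and therefore $\det M = 1$. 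Since $M$ has pairwise Hermitian-orthogonal columns, $|\det M|$ equals the product of their norms, so $\|z\times w\| = 1$ follows at once. Hence $M \in \SU(3)$.

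For uniqueness, the Hermitian orthogonal complement of $\spann_\C(z,w)$ in $\C^3$ is one-complex-dimensional, and its unit vectors form a circle $\{e^{i\theta} v_0\}$. Because $\det(z, w, e^{i\theta} v_0) = e^{i\theta}\det(z,w,v_0)$ and this determinant has modulus $1$, the condition $\det = 1$ pins down $\theta$ uniquely. I expect the only mildly subtle step to be the determinant identity above, since it is the bridge that turns the algebraic identities of the preceding lemma into the geometric statement about $\SU(3)$; once it is in hand, the rest is a clean repackaging, and no separate norm computation of $z\times w$ is required.
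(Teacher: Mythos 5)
Your proof is correct and complete. The paper itself offers no argument for this lemma --- it is one of the facts about the complex cross product described as ``can be verified easily'' --- so there is no proof in the paper to compare against. That said, your route is a genuinely nice one: instead of computing the norm $\abs{z\times w}$ directly (which is what the paper's later Lemma on $\abs{z\times w}^2 = \abs{z}^2\abs{w}^2 - \abs{\llangle z,w\rrangle}^2$ provides, but which is stated \emph{after} the lemma you are proving), you introduce the identity $\llangle v, z\times w\rrangle = \overline{\det(v,z,w)}$ and derive $\det M = 1$ first, getting the unit-norm statement for free from $\abs{\det M} = \prod \lVert\text{columns}\rVert$. All the individual steps check out: antisymmetry of $\times$ gives $\llangle w, z\times w\rrangle = 0$; the determinant identity is a correct cofactor expansion (note the cross product in this paper is conjugate-bilinear, which is exactly why the conjugate appears on the determinant); the triple-product formula reduces $w\times(z\times w)$ to $z$ under the orthonormality hypothesis; and the uniqueness argument via the circle $e^{i\theta}v_0$ and the determinant phase is sound. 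This is a self-contained argument that respects the paper's logical ordering and is arguably cleaner than a direct coordinate verification.
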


\begin{cor}
For all $A\in \SU(3)$ and $z,w\in \C^3$ we have
\begin{gather*}
 (A\cdot z)\times (A\cdot w) = A\cdot (z\times w).
\end{gather*}
\end{cor}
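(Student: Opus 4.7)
The plan is to verify the identity first in the special case where $z,w$ form an orthonormal pair, using the uniqueness characterization of $z\times w$ provided by the preceding lemma, and then to bootstrap to arbitrary $z,w\in\C^3$ by exploiting the fact that both sides are conjugate bilinear in $(z,w)$.

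For orthonormal $z,w$: since $A\in\SU(3)$ preserves the hermitian product $\llangle\cdot,\cdot\rrangle$, the pair $Az,Aw$ is again orthonormal. The preceding lemma guarantees that $(z,w,z\times w)\in\SU(3)$, and left-multiplication by $A$ shows that $(Az,Aw,A(z\times w))$ is a product of two $\SU(3)$ matrices, hence lies in $\SU(3)$. Applying the uniqueness clause of the preceding lemma to the orthonormal pair $Az,Aw$ then forces $A(z\times w) = (Az)\times(Aw)$.

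For general $z,w\in\C^3$: the defining formula makes $(z,w)\mapsto z\times w$ visibly antilinear in each slot, so both $(z,w)\mapsto A(z\times w)$ and $(z,w)\mapsto(Az)\times(Aw)$ are sesquilinear in $(z,w)$. Expanding in the standard basis $e_1,e_2,e_3$ of $\C^3$, the full identity reduces to checking $A(e_i\times e_j)=(Ae_i)\times(Ae_j)$ for all $i,j\in\{1,2,3\}$. For $i\ne j$ the pair $e_i,e_j$ is orthonormal and the previous paragraph applies; for $i=j$ both sides vanish, since $v\times v=0$ for every $v\in\C^3$, as is immediate from the defining formula. There is no serious obstacle here; the only point requiring attention is that, because $\times$ is antilinear rather than complex linear, the extension from basis vectors to arbitrary inputs uses the sesquilinear rather than bilinear structure of $\times$.
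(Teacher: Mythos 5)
Your proof is correct. The paper itself gives no explicit argument for this corollary (it declares the lemmas and corollary in this section "can be verified easily" and moves on), so there is no official proof to compare against, but your two-step derivation — uniqueness from the preceding lemma in the orthonormal case, then extension to arbitrary $z,w$ by conjugate bilinearity after expanding in the standard basis and using antisymmetry to kill the diagonal terms — is a sound way to deduce the statement as a genuine corollary of that lemma, which is clearly what the paper's labeling intends. One terminological nit: the cross product is conjugate-linear in \emph{both} slots, so "conjugate bilinear" (which you use first) is the right word; "sesquilinear" usually means linear in one slot and conjugate-linear in the other, so avoid it here. You could also shorten the bootstrap by noting $\overline{z\times w}$ is the ordinary real-form cross product, so the identity $(Mz)\times(Mw)=(\det M)(M^{-1})^{\mathrm t}(z\times w)$ together with $\det A=1$ and $(A^{-1})^{\mathrm t}=\bar A$ gives the claim in one line, but your route is equally valid and stays closer to the paper's lemma.
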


\begin{lem}
\label{crossproductnorm}
For all $z,w\in \C^3$ we have
\begin{gather*}
  \abs{z\times w}^2 = \abs{z}^2\abs{w}^2 - \abs{\llangle z,w\rrangle}^2.
\end{gather*}
\end{lem}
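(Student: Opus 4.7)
The plan is to reduce to the case where $z,w$ form an orthonormal pair, where the identity is automatic from the preceding lemma (since then $(z,w,z\times w) \in \SU(3)$ forces $|z\times w| = 1$ and simultaneously the right-hand side equals $1-0 = 1$).

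First, I would record the sesquilinearity of the cross product: the formula shows $z\times w$ is conjugate-linear in each argument, so $(\lambda z)\times w = \bar\lambda\,(z\times w)$ and $z\times(\mu w) = \bar\mu\,(z\times w)$, and $z\times z = 0$ by inspection. Both sides of the asserted identity are then easily seen to transform by $|\lambda|^2$ under $z\mapsto\lambda z$ (and similarly for $w$), so the identity is homogeneous of bidegree $(2,2)$ in $(|z|,|w|)$. This lets me assume $z,w$ are unit vectors (the zero cases being trivial).

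Now write $w = \alpha z + \beta w'$ with $\alpha = \llangle z,w\rrangle$, $w' \perp z$, $|w'|=1$, and $|\alpha|^2 + |\beta|^2 = 1$. By sesquilinearity and $z\times z = 0$,
\begin{gather*}
  z\times w \;=\; \bar\alpha\,(z\times z) + \bar\beta\,(z\times w') \;=\; \bar\beta\,(z\times w').
\end{gather*}
Since $z$ and $w'$ are orthonormal, the previous lemma yields $(z,w',z\times w') \in \SU(3)$, so in particular $|z\times w'| = 1$. Therefore $|z\times w|^2 = |\beta|^2 = 1 - |\alpha|^2 = |z|^2|w|^2 - |\llangle z,w\rrangle|^2$, which is the claim.

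There is essentially no obstacle here; the only tiny thing to verify en route is the conjugate-linearity and vanishing $z\times z = 0$, both immediate from the explicit component formula. An alternative purely computational route — expanding $|z\times w|^2 = \sum_{i<j} |\bar z_i\bar w_j - \bar z_j\bar w_i|^2$ and matching it termwise against $|z|^2|w|^2 - |\llangle z,w\rrangle|^2$ — also works but is less illuminating; the reduction via $\SU(3)$-equivariance is both shorter and consistent with the paper's emphasis on the $\SU(3)$-structure underlying the octonionic multiplication.
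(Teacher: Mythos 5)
Your proof is correct, and it takes a genuinely different route from the one the paper has in mind. The paper groups Lemma~\ref{crossproductnorm} with the other cross-product identities under the blanket remark that ``all of which can be verified easily,'' i.e.\ by direct expansion of $\abs{z\times w}^2$ componentwise and comparison with $\abs{z}^2\abs{w}^2-\abs{\llangle z,w\rrangle}^2$ (the complex Lagrange identity). You instead exploit conjugate-bilinearity and $z\times z=0$ to reduce to a unit orthonormal pair, then invoke the preceding $\SU(3)$ lemma. Your reduction is clean and legitimate given the paper's ordering of the lemmas. One small caveat worth being aware of: a direct verification of the $\SU(3)$ lemma already has to establish $\abs{z\times w'}=1$ for orthonormal $z,w'$, which is exactly the special case you are quoting; so your argument does not eliminate the underlying computation, it relocates it into the earlier lemma. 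That is perfectly fine as a matter of logic (no circularity, since the $\SU(3)$ lemma precedes this one), but it means the conceptual gain is in organization rather than in avoided work. Also, a minor bookkeeping point: with $\alpha=\llangle z,w\rrangle$ and $w-\alpha z\ne 0$, your $\beta=\abs{w-\alpha z}$ is a nonnegative real, so $\abs{\alpha}^2+\beta^2=1$ is what you actually use; writing $\abs{\beta}^2$ is harmless but slightly misleading. The degenerate case $w=\alpha z$ should be noted separately (both sides vanish), which your ``zero cases being trivial'' remark does cover.
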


\begin{proof}[Proof of Theorem\,\ref{octonionicmultiplication}]
Given an equation of form
\begin{gather*}
  \bsmat a_0\\ a\esmat \cdot \bsmat z_0\\ z\esmat = \bsmat w_0\\ w\esmat
\end{gather*}
we multiply both sides by $\bsmat \bar a_0\\ -a\esmat$ from the left. Using
the previous lemmas we obtain
\begin{gather*}
  (\abs{a_0}^2 + \abs{a}^2)\bsmat z_0\\ z\esmat =
  \bsmat \bar a_0\\ -a\esmat \bsmat w_0 \\ w \esmat.
\end{gather*}
Hence, $\frac{1}{\abs{a_0}^2 + \abs{a}^2}\bsmat \bar a_0\\ -a\esmat$ is the unique
left inverse of $\bsmat a_0\\ a\esmat\neq 0$. Similarly we see that it is also the unique
right inverse. Using Lemma\,\ref{crossproductnorm}
a straightforward computation shows
\begin{gather*}
  \Abs{\bsmat z_0\\ z\esmat \cdot \bsmat w_0\\ w\esmat}^2
  = (\abs{z_0}^2 + \abs{z}^2) (\abs{w_0}^2 + \abs{w}^2).
\end{gather*}
All in all, $\C\times \C^3$ is a normed division algebra.
\end{proof}

\begin{rem}
The two maximal subgroups of the automorphism group $\Gtwo$ of the octonions
are $\SU(3)$ and $\SO(4)$. The inclusion $\SU(3) \subset \Gtwo$ is emphasized by
our construction of the octonions via the complex cross product. 
The inclusion $\SO(4)\subset\Gtwo$, on the other hand, is emphasized in the standard construction of the octonions using pairs of quaternions with the product
\begin{gather*}
  (u_1,v_1)\cdot (u_2,v_2) = (u_1u_2 - \bar v_2 v_1, v_2 u_1 + v_1 \bar u_2).
\end{gather*}
The inclusion $\SO(4)\subset\Gtwo$ is then given by the action
$(q_1,q_2)\cdot (u,v) = (q_1u\bar q_1,q_2 v \bar q_1)$
of two unit quaternions on the octonion $(u,v)$.
A concrete isomorphism between the two realizations of the octonions is given by
\begin{gather*}
  \bsmat z_0\\ z\esmat = \bsmat x_0 + i y_0 \\ x + i y\esmat \mapsto
    (x_0 + x_1 \qi + x_2 \qj + x_3 \qk, y_0 + y_1 \qi + y_2 \qj + y_3 \qk).
\end{gather*}
\end{rem}

\bigskip

\section{Horizontal lifts, suspensions, and powers of spheres}
\label{lifts}
In this section we study the interplay of suspensions, powers of spheres and the
horizontal lifting form of the boundary map in the exact homotopy sequence.

\subsection{Powers of spheres and suspensions}
The {\em $k$-th power of the sphere $\Sph^n \subset \R \times \R^n$} is in polar
coordinates defined by
\begin{gather*}
  \power^k \bsmat \cos t\\ v\sin t\esmat = \bsmat \cos kt \\ v \sin kt\esmat.
\end{gather*}
This definition yields real analytic maps $\Sph^n \to \Sph^n$ that generalize the
algebraically defined $k$-th powers of the unit spheres in the normed division algebras.

\begin{rem}
The degree of $\power^k$ is $k$ if $n$ is odd. If $n$ is even, the degree of
$\power^k$ is $1$ if $k$ is odd and $0$ if $k$ is even. This is classical.
For a generalization of the $k$-powers to cohomogeneity one
manifolds and a unified computation of the degree see \cite{puttmann}.
\end{rem}

Given a map $\rho : \Sph^{n-1} \to \Sph^{m-1}$,
view $\Sph^n\subset \R\times\R^n$ and $\Sph^m \subset \R\times\R^m$
as suspensions of $\Sph^{n-1}$ and $\Sph^{m-1}$ and let
\begin{gather*}
  \Sigma\rho : \Sph^n\to \Sph^m, \quad
  \bsmat x\\ v\esmat \mapsto \bsmat x \\ \abs{v} \rho(v/\abs{v})\esmat
\end{gather*}
denote the {\em suspension} of $\rho$.

\begin{lem}
\label{spherecommute}
We have the trivial identity
\begin{gather*}
  (\Sigma\rho)\circ \power^k\, = \,\power^k \circ \, \Sigma\rho.
\end{gather*}
\end{lem}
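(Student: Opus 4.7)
The plan is a direct verification by expanding in polar coordinates. Any point of $\Sph^n$ can be written as $\bsmat \cos t\\ v\sin t\esmat$ with $v \in \Sph^{n-1}$ and $t\in[0,\pi]$, and I would simply apply the two compositions to this general point and read off that they produce identical formulas.

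The key observation is that the two maps affect disjoint pieces of the polar decomposition: $\power^k$ acts only on the angular coordinate, sending $t$ to $kt$ while preserving the direction $v$, whereas $\Sigma\rho$ acts only on the direction $v$, sending it to $\rho(v)$ while preserving the angle. Concretely, $\power^k\bsmat \cos t\\ v\sin t\esmat = \bsmat \cos kt\\ v\sin kt\esmat$ directly from the definition, and a further application of the suspension formula to this point yields $\bsmat \cos kt\\ \rho(v)\sin kt\esmat$. Reversing the order, $\Sigma\rho\bsmat \cos t\\ v\sin t\esmat = \bsmat \cos t\\ \rho(v)\sin t\esmat$ from the suspension formula, and then $\power^k$ produces the same expression $\bsmat \cos kt\\ \rho(v)\sin kt\esmat$. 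Hence both compositions coincide pointwise.

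The only subtlety is consistency at the poles $t\in\{0,\pi\}$, where the unit vector $v$ is not uniquely determined in the polar representation. However, at those points $v\sin t = \rho(v)\sin t = 0$, so both $\power^k$ and $\Sigma\rho$ fix the poles independently of the chosen $v$, and the polar formulas extend continuously to all of $\Sph^n$. I expect no genuine obstacle in the proof — as the lemma's phrasing announces, the identity is a formal consequence of the definitions, since $\power^k$ and $\Sigma\rho$ operate on orthogonal aspects of the polar structure and therefore commute without any deeper input.
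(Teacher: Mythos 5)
The paper states this lemma without proof, so there is no proof to compare against; your plan of unrolling both compositions in polar coordinates is what the author presumably intends as the ``trivial'' verification. However, your central step hides a genuine gap. You claim that applying $\Sigma\rho$ to $\bsmat \cos kt\\ v\sin kt\esmat$ yields $\bsmat \cos kt\\ \rho(v)\sin kt\esmat$, i.e.\ you read the polar suspension formula with angle parameter $kt$. But the suspension is \emph{defined} by the Cartesian rule $\Sigma\rho\bsmat x\\ u\esmat = \bsmat x\\ \abs{u}\,\rho(u/\abs{u})\esmat$. With $u = v\sin kt$ and $\sin kt < 0$ --- which happens for a set of $t\in(0,\pi)$ of positive measure as soon as $k\ge 2$ --- one has $\abs{u} = -\sin kt$ and $u/\abs{u} = -v$, so the actual output is $\bsmat\cos kt\\ -\sin kt\cdot\rho(-v)\esmat$, which agrees with your claimed expression only when $\rho(-v) = -\rho(v)$. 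Your ``key observation'' that $\power^k$ touches only the angle while $\Sigma\rho$ touches only the direction is precisely what fails: when $kt$ passes an odd multiple of $\pi$, $\power^k$ silently negates the direction vector when one returns to standard polar form.

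Concretely, take $\rho:\Sph^1\to\Sph^1$ to be the (even) circle squaring map and $k=2$; at the point with polar angle $t = 3\pi/4$ and direction $v$, the map $(\Sigma\rho)\circ\power^2$ lands at $(0,\rho(-v))$ while $\power^2\circ\Sigma\rho$ lands at $(0,-\rho(v))$, and these are antipodal. So the stated identity holds in full generality only when $\rho$ is odd, or on the region $\sin kt\ge 0$; the issue you flag at the poles is not the real difficulty. Note that in the paper's own use of this lemma ($\rho = \Sigma h_1$ with $h_1$ the Hopf map, and $h_1$ is even, so $\Sigma h_1$ is not odd) the two sides are both null-homotopic but not equal as maps --- a subtlety the paper glosses over as well. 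A repaired proof would either restrict to odd $\rho$, or replace the equality by a homotopy, or observe that the two formulas agree only after composing one side with the appropriate direction-reversing reflection on the lower hemisphere.
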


If $m = 2\ell$ and $k = 2j$ are even then the degree of the map $\power^k$
on the right hand side is zero.
Hence, $(\Sigma\rho)\circ \power^k$ is null-homotopic.
We describe an explicit null-homotopy in the following. Set
\begin{gather*}
  H_1: \Sph^n \times [0,1] \to \Sph^{2\ell}, \quad
  \bigl(\bsmat \cos t\\ v\sin t\esmat, s\bigr) \mapsto
  \begin{cases}
    \bsmat \cos 2jt\\ \rho(v)\sin 2jt\esmat, & \text{for $t \le \pi/2$,}\\
    \bsmat \cos 2jt\\ A(s)\cdot \rho(v) \sin 2jt\esmat, & \text{for $t \ge \pi/2$.}
  \end{cases}
\end{gather*}
Here, $A(s)\in \SO(2\ell)$ is a path from $\imat$ to $-\imat$, for example given by
$\ell$ copies of a standard $2\times 2$ rotation matrix. The homotopy $H_1$
deforms $(\Sigma\rho)\circ \power^{2j}$ to the map
\begin{gather*}
  \bsmat \cos t\\ v\sin t\esmat \mapsto  \bsmat\cos 2jt\\ \rho(v)\,\abs{\sin 2jt}\esmat.
\end{gather*}
Now set
\begin{gather*}
  H_2 : \Sph^n \times [0,1] \to \Sph^{2\ell}, \quad
  \bigl(\bsmat \cos t\\ v\sin t\esmat, s\bigr) \mapsto
  \bsmat x(s,t)\\  \rho(v) \sqrt{1-x(s,t)^2}\esmat
\end{gather*}
where $x(s,t) = s +(1-s)\cos 2jt$. The following is now immediate.

\begin{lem}
\label{spherenull}
The concatenation of the homotopies $H_1$ and $H_2$ deforms
$(\Sigma\rho)\circ \power^{2j}$ to the constant map to the north pole of
$\Sph^{2\ell}$.
\end{lem}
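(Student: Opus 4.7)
The statement assembles the two pieces just constructed: the content is to check that $H_1$ and $H_2$ are well-defined continuous homotopies, that $H_1(\cdot,1)=H_2(\cdot,0)$, and then to concatenate them in the standard way.

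I would begin by verifying that $H_1$ is a well-defined continuous map $\Sph^n\times[0,1]\to\Sph^{2\ell}$. Its two branches are specified on the closed hemispheres $\{t\le\pi/2\}$ and $\{t\ge\pi/2\}$ of $\Sph^n$, so continuity reduces to checking agreement on the equator $\{t=\pi/2\}$. There $\sin 2jt=\sin j\pi=0$, so both branches collapse to the single point $(\cos j\pi,0)\in\Sph^{2\ell}$ independently of $s$, $v$, and $A(s)$. At $s=0$ the path $A(s)$ is at $\imat$, so both branches evaluate to $(\cos 2jt,\rho(v)\sin 2jt)$; by Lemma~\ref{spherecommute} this is $(\Sigma\rho)\circ\power^{2j}$. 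At $s=1$ we have $A(1)=-\imat$, and a direct inspection of the two branches on the two hemispheres yields $H_1(\cdot,1):(\cos t,v\sin t)\mapsto(\cos 2jt,\rho(v)|\sin 2jt|)$.

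Next I would verify $H_2$. For $s\in[0,1]$ and $t\in[0,\pi]$ the number $x(s,t)=s+(1-s)\cos 2jt$ lies in $[2s-1,1]\subset[-1,1]$, so $\sqrt{1-x(s,t)^2}\ge 0$ is real and the given expression lands on $\Sph^{2\ell}$. Continuity at the poles of $\Sph^n$, where $v$ is not uniquely determined, is automatic, since the coefficient $\sqrt{1-x(s,t)^2}$ in front of $\rho(v)$ depends only on $(s,t)$ and vanishes in the image whenever the polar representative $v$ becomes ambiguous. At $s=0$, $x(0,t)=\cos 2jt$ and $H_2(\cdot,0)=(\cos 2jt,\rho(v)|\sin 2jt|)$, matching the terminal map of $H_1$. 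At $s=1$, $x(1,t)\equiv 1$ and the second coordinate vanishes identically, so $H_2(\cdot,1)$ is the constant map to the north pole $(1,0)\in\Sph^{2\ell}$.

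The proof concludes by combining the two homotopies via $H(x,s)=H_1(x,2s)$ for $s\in[0,1/2]$ and $H(x,s)=H_2(x,2s-1)$ for $s\in[1/2,1]$; the result is a continuous deformation from $(\Sigma\rho)\circ\power^{2j}$ to the constant map to the north pole. The only real obstacle is the matching of the branches of $H_1$ on the equator and the identification of $H_1(\cdot,1)$ with $H_2(\cdot,0)$; both hinge on the elementary identity $\sin(j\pi)=0$ together with a bookkeeping of the sign of $\sin 2jt$ on each hemisphere.
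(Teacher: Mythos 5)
Your identification of $H_1(\cdot,1)$ with $H_2(\cdot,0)$ is the one place that does not hold up for general $j$, and the ``bookkeeping of the sign of $\sin 2jt$ on each hemisphere'' you allude to is exactly where it breaks. On the northern hemisphere $\{t\le\pi/2\}$ the terminal map of $H_1$ is, by definition, $(\cos 2jt,\,\rho(v)\sin 2jt)$; equating this with $H_2(\cdot,0)=(\cos 2jt,\,\rho(v)\abs{\sin 2jt})$ requires $\sin 2jt\ge0$ for all $t\in[0,\pi/2]$. Since $2jt$ ranges over $[0,j\pi]$ there, this holds only for $j\le1$. For $j\ge2$ the sign of $\sin 2jt$ oscillates on each hemisphere and the two maps genuinely differ: for $j=2$, $t=\pi/3$ one gets $H_1(\cdot,1)=(-\tfrac{1}{2},\,-\tfrac{\sqrt3}{2}\rho(v))$ but $H_2(\cdot,0)=(-\tfrac{1}{2},\,+\tfrac{\sqrt3}{2}\rho(v))$. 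Hence the concatenation you write down is not a well-defined homotopy for $j\ge2$.

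You share this gap with the paper, which asserts the same terminal map for $H_1$ and then declares the lemma ``immediate''; the error is not one you introduced. It is harmless for the paper because the only subsequent application is $j=1$ (the null-homotopy of $\power^2\circ\Sigma^2 h_1$ in section\,\ref{kappatwo}), and in that case your sign analysis is correct and the concatenation works. If the lemma is to be proved for arbitrary $j$, one should modify $H_1$ so that $A(s)$ acts on every second interval between consecutive zeros of $\sin 2jt$, that is on $\{m\pi/(2j)\le t\le(m+1)\pi/(2j)\}$ for odd $m$, rather than only on the southern hemisphere; at the endpoints of these intervals $\sin 2jt=0$, so continuity is automatic, and the terminal map is then genuinely $(\cos 2jt,\,\rho(v)\abs{\sin 2jt})$ for every $j$, after which the rest of your argument goes through unchanged.
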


Similarly, for even $m=2l$ and odd $k= 2j+1$ one can explicitly deform
$(\Sigma\rho)\circ \power^{2j+1}$ to $\Sigma\rho$.

\smallskip

\subsection{Horizontal lifts}
We now explore Duran's specific form of the boundary map in
the exact homotopy sequence of a smooth fiber bundle
$F \cdots E \stackrel{\pi}\longrightarrow B$.
Assume that $E$ is equipped with an Ehresmann connection\footnote{An Ehresmann
connection is a complete horizontal distribution. In all our examples we fix a Riemannian
metric on the compact manifold $E$ such that $E\to B$ is a Riemannian submersion for
some Riemannian metric on $B$.
The horizontal space at a point $x\in E$ is then given by the orthogonal complement of
the tangent space to the fiber $F_x$. The Riemannian metric on $B$ is irrelevant in our
following constructions.}.
Suppose we are given a map $\alpha : \Sph^k \to B$.
Let $N = (1,0,\ldots,0)$ denote the north pole of $\Sph^k\subset \R^{k+1}$,
let $\Sph^{k-1}$ be the set of all vectors $v \in T_N \Sph^k$ with $\abs{v} = 1$,
and let $\gamma_v$ denote the geodesic of $\Sph^k$ with $\gamma_v(0) = N$
and $\dot \gamma_v(0) = v$. Fix a point $p \in E$.
Lift the curve $\alpha\circ\gamma_v$ in $B$ horizontally to a curve
$\widetilde{\alpha\circ\gamma}_v$ in $E$ with $\widetilde{\alpha\circ\gamma}_v(0) = p$.
Since $\gamma_v(\pi)$ is the south pole $S$ of $\Sph^k$ the end point
$\widetilde{\alpha\circ\gamma}_v(\pi)$ of the lifted curve is contained in the
fiber $F_{\vert \alpha(S)} = \pi^{-1}(\alpha(S))$ for any unit tangent vector $v$.
Set
\begin{gather*}
  \partial(\alpha): \Sph^{k-1} \to F_{\vert \alpha(S)} \subset E, \qquad
  v \mapsto \widetilde{\alpha\circ\gamma}_v(\pi).
\end{gather*}

\begin{lem}[see \cite{duran}]
The assignment $\alpha \mapsto \partial(\alpha)$ induces the boundary map
\begin{gather*}
  \pi_k(B) \to \pi_{k-1}(F)
\end{gather*}
in the exact homotopy sequence of the fibration $F \cdots E \to B$.
\end{lem}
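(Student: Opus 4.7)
The plan is to identify the horizontal-lift construction $\partial(\alpha)$ with the standard description of the connecting homomorphism $\pi_k(B)\to\pi_{k-1}(F)$ obtained by lifting a based disk map.

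First I would exhibit $\Sph^k$ as a quotient of the closed ball $\overline{D^k(\pi)}\subset T_N\Sph^k$ via the exponential map at the north pole, $\exp_N(tv) = \gamma_v(t)$ for $v\in\Sph^{k-1}\subset T_N\Sph^k$ and $t\in[0,\pi]$. This map collapses the boundary sphere $\partial D^k(\pi)$ to the single point $S$ and is a diffeomorphism on the open ball. The composition $\alpha\circ\exp_N\colon\overline{D^k(\pi)}\to B$ is thus a based disk map sending $\partial D^k(\pi)$ to $\{\alpha(S)\}$ and representing the same class as $\alpha$ in $\pi_k(B)$.

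Next I would lift $\alpha\circ\exp_N$ to $E$ by setting
\[
  \widetilde{\alpha\circ\exp_N}(tv) := \widetilde{\alpha\circ\gamma}_v(t),
\]
that is, by following the horizontal lift of $\alpha\circ\gamma_v$ starting at the chosen basepoint $p\in F_{\alpha(N)}$. Completeness of the Ehresmann connection guarantees this lift exists for all $t\in[0,\pi]$, and continuous dependence of the horizontal-lift ODE on initial data gives continuity in $v$; at $t=0$ every radial lift equals $p$ irrespective of $v$, so the map is continuous at the center as well. By construction, its restriction to $\partial D^k(\pi)$ takes values in $F_{\alpha(S)}$ and coincides with $\partial(\alpha)\colon\Sph^{k-1}\to F_{\alpha(S)}$.

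This is precisely the standard recipe for the boundary map: represent $[\alpha]$ by a based disk map, lift it to $E$ using the homotopy lifting property so that the boundary lands in the fiber over the basepoint, and take the homotopy class of this boundary in $F$. The horizontal geodesic lift is just one continuous such disk lift, and any two continuous lifts of the same based disk map that agree at the center are homotopic rel.\ center (the two lifts differ by a vertical correction, which is contractible along the fiber), so their boundary restrictions are homotopic in $F_{\alpha(S)}$. A homotopy $H\colon\Sph^k\times[0,1]\to B$ between representatives lifts by the same radial procedure to a homotopy between the corresponding boundaries, showing that $\alpha\mapsto\partial(\alpha)$ descends to a well-defined map on homotopy groups. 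The main technical point is the continuity of the radial horizontal lift at the center of the disk, where polar coordinates degenerate; it is resolved by the observation that all radial lifts share the initial value $p$. Once this is in hand, identification with the classical connecting homomorphism of the long exact sequence of a fibration becomes routine.
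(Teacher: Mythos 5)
Your argument follows the same route as the paper's proof, which simply cites Bredon for the standard identification of the connecting homomorphism via lifts of based disk maps and notes that the horizontal lift is one valid choice of lift. One caveat: the parenthetical justification for lift-independence (``the two lifts differ by a vertical correction, which is contractible along the fiber'') is imprecise and not literally correct for a general fiber $F$; the standard argument instead applies the homotopy lifting property to the pair $(D^k\times[0,1],\ D^k\times\{0,1\}\cup\{c\}\times[0,1])$, which is what the cited reference does.
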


\begin{proof}
This is one of the many equivalent topological constructions of the boundary map,
see e.\,g.~\cite{bredon}, page 452 for an appropriate reference.
The specific point here is that the lifting is performed horizontally.
\end{proof}

The Eckmann-Kervaire identity (see \cite{kervaire}) holds also for our specific way of lifting:
\begin{lem}
\label{susplift}
We have
\begin{gather*}
  \partial(\alpha\circ \Sigma\beta) = \partial(\alpha)\circ \beta.
\end{gather*}
\end{lem}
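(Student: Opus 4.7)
The plan is to show that both sides produce the same map $\Sph^{k-1}\to F$ by verifying that $\Sigma\beta$ converts the meridian geodesics of $\Sph^k$ based at the north pole into the meridian geodesics of $\Sph^n$, in a $\beta$-equivariant way. Once the projected curves in $B$ agree, the uniqueness of horizontal lifts with a prescribed starting point in $E$ forces the lifts to agree, and evaluating at $t=\pi$ yields the identity.

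First I would fix conventions so the domains and codomains line up: with $\beta:\Sph^{k-1}\to\Sph^{n-1}$, we have $\Sigma\beta:\Sph^k\to\Sph^n$ and $\alpha:\Sph^n\to B$, so both sides are maps $\Sph^{k-1}\to F$. The target fibers coincide because $\Sigma\beta$ carries the south pole $S$ of $\Sph^k$ to the south pole of $\Sph^n$, whence $(\alpha\circ\Sigma\beta)(S) = \alpha(\Sigma\beta(S))$ and horizontal lifts from the same base point $p$ land in the same fiber.

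The central observation is a one-line computation. Identify a unit vector $v\in T_N\Sph^k$ with a point $w\in\Sph^{k-1}\subset\R^k$ via $v=(0,w)$, so that $\gamma_v(t) = (\cos t, w\sin t)$. By the defining formula of the suspension,
\begin{gather*}
(\Sigma\beta)\bigl(\gamma_v(t)\bigr) = \bigl(\cos t, \sin t\cdot\beta(w)\bigr) = \gamma_{\beta(v)}(t),
\end{gather*}
where on the right $\beta(v)$ denotes the unit vector $(0,\beta(w))\in T_N\Sph^n$ (the endpoint identifications at $t=0,\pi$ hold by continuity). Consequently $(\alpha\circ\Sigma\beta)\circ\gamma_v$ and $\alpha\circ\gamma_{\beta(v)}$ are identical curves in $B$, so their horizontal lifts starting at $p\in E$ coincide in $E$, and in particular at $t=\pi$ one obtains
\begin{gather*}
\partial(\alpha\circ\Sigma\beta)(v) = \widetilde{(\alpha\circ\Sigma\beta)\circ\gamma}_v(\pi) = \widetilde{\alpha\circ\gamma}_{\beta(v)}(\pi) = \partial(\alpha)(\beta(v)).
\end{gather*}

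There is no genuine obstacle here; the content of the lemma is essentially built into the definition of $\Sigma\beta$. The only point requiring care is the bookkeeping between a unit tangent vector $v=(0,w)$ at the north pole and the corresponding point $w$ of the equatorial sphere to which the suspension formula applies, which ensures that the formal symbol $\beta(v)$ in $\partial(\alpha)\circ\beta$ agrees with the direction along which the image geodesic emanates from the north pole of $\Sph^n$.
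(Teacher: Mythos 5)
Your proof is correct and takes essentially the same approach as the paper: you verify the identity $(\Sigma\beta)\circ\gamma_v = \gamma_{\beta(v)}$ (which the paper leaves as ``straightforward to verify'') and then invoke uniqueness of horizontal lifts. You have merely spelled out the computation that the paper elides.
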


\begin{proof}
It is straightforward to verify that
$\alpha\circ\Sigma\beta\circ\gamma_v(t) = \alpha\circ\gamma_{\beta(v)}$.
\end{proof}

\smallskip

Now consider the case where a connected compact Lie group $G \subset \SO(m)$
acts transitively on a manifold $B$. Suppose that $\SO(m)$ is equipped with a biinvariant
Riemannian metric. This induces a biinvariant metric on the subgroup $G$.
Let $\imat$ denote the unit element in $G$, set $p = \pi(\imat)$,
and let $H = G_p$ denote the isotropy group at $p$. Then $H \cdots G \to B$
is a principal fiber bundle and we can apply the construction above.
Assume moreover that a subgroup $K \subset G$ also acts transitively on $B$.
The isotropy group $K_p$ is equal to $K\cap H$. We have a commutative diagram
\begin{gather*}
  \begin{CD}
    K\cap H @>>> H @>>> H / (K\cap H)\\
     @VVV    @VVV @|\\
    K @>>> G @>>> G/K\\
    @VV{\pi}V     @VV{\pi}V\\
    B @= B
  \end{CD}
\end{gather*}
where the equality $H/(K\cap H) = G/K$ holds since $H/(K\cap H)$ is included in
$G/K$ and both spaces have the same dimension. Note that the base manifold $B$
usually inherits different left invariant Riemannian metrics from $K$ and $G$
by Riemannian submersion but this is irrelevant for the lifting construction.

\begin{lem}
\label{doublehor}
If $\gamma : [0,T] \to B$ is a curve with $\gamma(0) = \pi(\imat)$ and $\tilde\gamma^K$,
$\tilde\gamma^G$ are the unique horizontal lifts of $\gamma$ with
$\tilde\gamma^K(0) = \tilde\gamma^G(0) = \imat$ then
\begin{gather*}
  \tilde\delta(t) = \tilde\gamma^G (t)^{-1} \cdot \tilde\gamma^K(t)
\end{gather*}
defines a curve in the fiber $\pi^{-1}(\pi({\imat})) = H$ and this curve is horizontal with respect to
the Riemannian submersion $H \to H / (K\cap H)$.
\end{lem}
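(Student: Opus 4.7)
\textbf{Proof plan for Lemma \ref{doublehor}.}
The plan is to verify the two assertions separately: first that $\tilde\delta(t)$ lands in $H$, which is essentially formal, and then that $\tilde\delta$ is horizontal, which is where the biinvariance of the metric is used in an essential way. For the first claim I would simply apply $\pi$ to $\tilde\delta(t)$: writing $\pi(g) = g\cdot p$, the two lifts satisfy $\tilde\gamma^G(t)\cdot p = \tilde\gamma^K(t)\cdot p = \gamma(t)$, so $\pi(\tilde\delta(t)) = \tilde\gamma^G(t)^{-1}\gamma(t) = p$, giving $\tilde\delta(t) \in \pi^{-1}(p) = H$. In particular $\tilde\delta^{-1}\dot{\tilde\delta}$ takes values in $\ah$.

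Next I would compute the left logarithmic derivative of $\tilde\delta$. Setting $X_G = (\tilde\gamma^G)^{-1}\dot{\tilde\gamma}^G$ and $X_K = (\tilde\gamma^K)^{-1}\dot{\tilde\gamma}^K$, a direct differentiation of $\tilde\delta = (\tilde\gamma^G)^{-1}\tilde\gamma^K$ yields
\begin{gather*}
  \tilde\delta^{-1}\dot{\tilde\delta} = -\Ad(\tilde\delta^{-1})(X_G) + X_K.
\end{gather*}
Horizontality of $\tilde\gamma^G$ in $G\to G/H = B$ with respect to the biinvariant metric translates into $X_G \perp \ah$ inside $\ag$, and horizontality of $\tilde\gamma^K$ in $K\to K/(K\cap H)=B$ translates into $X_K \perp (\ak\cap \ah)$ inside $\ak$ (and hence also inside $\ag$, since $\ak\cap\ah\subset\ak$).

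Finally, since $\ah$ is equipped with the restriction of the Ad-invariant inner product, horizontality of $\tilde\delta$ in $H\to H/(K\cap H)$ reduces to showing that $\langle \tilde\delta^{-1}\dot{\tilde\delta}, Y\rangle = 0$ for every $Y \in \ak\cap\ah$. From the decomposition above the $X_K$ term contributes nothing, and by Ad-invariance
\begin{gather*}
 \langle \Ad(\tilde\delta^{-1})(X_G), Y\rangle = \langle X_G, \Ad(\tilde\delta)(Y)\rangle.
\end{gather*}
Because $\tilde\delta(t) \in H$ preserves $\ah$ under the adjoint action, $\Ad(\tilde\delta)(Y) \in \ah$, and $X_G\perp\ah$ kills this pairing as well. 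This shows $\tilde\delta^{-1}\dot{\tilde\delta} \in \ah$ is orthogonal to $\ak\cap\ah$, which is precisely the horizontality condition for $H\to H/(K\cap H)$.

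The one point that requires care, and is the real content of the lemma, is the Ad-invariance step in the last paragraph: one needs biinvariance of the ambient metric on $\SO(m)$ both to translate horizontality into an algebraic perpendicularity condition on the left logarithmic derivatives and to slide $\Ad(\tilde\delta)$ across the inner product. Everything else is bookkeeping with the product rule and with the fact that the two lifts agree under $\pi$.
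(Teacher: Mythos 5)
Your proof is correct and follows essentially the same route as the paper's: apply $\pi$ to get $\tilde\delta(t)\in H$, take the left logarithmic derivative, split it as $-\Ad(\tilde\delta^{-1})(X_G) + X_K$, and use biinvariance together with $\Ad(\tilde\delta)(\ah)=\ah$ to show each term is perpendicular to $\ak\cap\ah$. The only difference is cosmetic: you spell out the Ad-invariance pairing explicitly, whereas the paper states directly that the conjugated term is perpendicular to all of $\ah$.
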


\begin{proof}
Since $\tilde\gamma^K(t)$ and $\tilde\gamma^G(t)$ are lifts of $\gamma$, we have
$\gamma(t) = \tilde\gamma^K(t)\cdot p = \tilde\gamma^G(t)\cdot p$.
Hence $\delta(t) \in H$. Moreover,
\begin{gather*}
  \dot{\tilde\delta}(t) = - \tilde\gamma^G (t)^{-1}\cdot \dot{\tilde\gamma}^G (t)
  \cdot \tilde\gamma^G (t)^{-1} \cdot
  \tilde\gamma^K(t) + \tilde\gamma^G(t)^{-1} \cdot \dot{\tilde\gamma}^K(t)
\end{gather*}
and
\begin{gather*}
  \tilde\delta(t)^{-1} \cdot \dot{\tilde\delta}(t) =
   - \tilde\delta(t)^{-1} \cdot \tilde\gamma^G (t)^{-1} \cdot
   \dot{\tilde\gamma}^G (t) \cdot \tilde\delta(t)
   +  \tilde\gamma^K (t)^{-1} \cdot \dot{\tilde\gamma}^K (t).
\end{gather*}
Since $\tilde\gamma^K$ is horizontal, the second summand is perpendicular to
the Lie algebra $\ak\cap\ah$ of $K\cap H$. Since $\tilde\delta(t)\in H$ and $\tilde\gamma^G$
is horizontal, the first summand is perpendicular to the Lie algebra $\ah$ of $H$.
Hence, $\tilde\delta(t)^{-1} \cdot \dot{\tilde\delta}(t)$
is perpendicular to $\ah\cap \ak$ and $\tilde\delta$ is horizontal.
\end{proof}

\begin{cor}
For any map $\alpha : \Sph^k \to B$ the two maps $\partial_{G\to B}(\alpha)$
and $\partial_{K\to B}(\alpha)$ are homotopic within $\pi^{-1}(\alpha(S)) \approx H$
by the homotopy
\begin{gather*}
  (v,s) \mapsto \widetilde{\alpha\circ\gamma}_v^G(\pi) \cdot
  \widetilde{\alpha\circ\gamma}_v^G(s)^{-1} \cdot
  \widetilde{\alpha\circ\gamma}_v^K(s).
\end{gather*}
Here, $S = (-1,0,\ldots,0)$ denotes the south pole of $\Sph^k$.
\end{cor}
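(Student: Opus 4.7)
The plan is to verify directly that the explicit formula defines a continuous map $\Sph^{k-1}\times[0,\pi]\to\pi^{-1}(\alpha(S))$ whose values at $s=0$ and $s=\pi$ recover $\partial_{G\to B}(\alpha)$ and $\partial_{K\to B}(\alpha)$, respectively. The heart of the matter has already been done in Lemma~\ref{doublehor}; the corollary is essentially a packaging of that lemma, shifted by left translation so that the path lies in $\pi^{-1}(\alpha(S))$ rather than in $H$.

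First I would set, for each fixed $v\in\Sph^{k-1}$, $\beta(s):=\widetilde{\alpha\circ\gamma}_v^G(s)$ and $\beta'(s):=\widetilde{\alpha\circ\gamma}_v^K(s)$. Both curves project to $\alpha\circ\gamma_v$ and start at $\imat$, so $\pi(\beta(s))=\pi(\beta'(s))=\alpha(\gamma_v(s))$ for all $s$. Under the identification of $\pi:G\to B$ with $g\mapsto g\cdot p$, the fiber through any $g_0\in G$ is the left coset $g_0H$, so $\beta(s)^{-1}\beta'(s)\in H$ for every $s$. Consequently
\begin{gather*}
  \beta(\pi)\cdot\beta(s)^{-1}\cdot\beta'(s)\in\beta(\pi)\cdot H=\pi^{-1}(\alpha(S)),
\end{gather*}
which establishes that the proposed formula really takes values in the claimed fiber.

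Next I would check the endpoint conditions. At $s=0$ the middle factor is $\imat$ and the formula reduces to $\beta(\pi)=\partial_{G\to B}(\alpha)(v)$. At $s=\pi$ the outer factor $\beta(\pi)$ cancels against $\beta(\pi)^{-1}$, leaving $\beta'(\pi)=\partial_{K\to B}(\alpha)(v)$. Continuity in $(v,s)$ follows from the continuous (in fact smooth) dependence of horizontal lifts on initial data and parameters for a fixed Ehresmann connection, combined with continuity of $\alpha$ and of the group operations in $G$.

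I do not anticipate a real obstacle: the only substantive point is that $\beta(s)^{-1}\beta'(s)$ lies in $H$ for all intermediate $s$, and this is immediate from the fact that both $\beta$ and $\beta'$ are lifts of the same base curve. The stronger horizontality statement in Lemma~\ref{doublehor} is not logically required for the corollary as stated, but it pins down the connecting curve as the canonical horizontal lift inside $H\to H/(K\cap H)=G/K$, which is likely the feature that will be exploited in later sections.
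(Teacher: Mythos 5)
Your proof is correct and matches the paper's implicit reasoning: the corollary is stated as an immediate consequence of Lemma~\ref{doublehor}, and your verification (fiber containment via $\beta(s)^{-1}\beta'(s)\in H$, endpoint evaluation, continuity) is exactly what justifies it. Your observation that only the membership $\tilde\delta(s)\in H$ — and not the full horizontality established in the lemma — is needed for the homotopy statement itself is accurate; the horizontality is what makes the connecting curve canonical, which is exploited in later sections rather than here.
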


Assume now more specifically that the compact Lie group $G\subset \SO(n+1)$
acts transitively on $\Sph^n$. The isotropy group $G_N$ of the north pole $N$
is denoted by $H$. As above let $\gamma_v$ denote the geodesic of $\Sph^n$
with $\gamma_v(0) = N$ and $\dot \gamma_v(0) = v$, $\abs{v} = 1$.
Clearly $\gamma_v(t+\pi) = -\gamma_v(t)$ and $\dot\gamma_v(k\pi) = (-1)^k\cdot v$.
Note that the biinvariant metric on $G$ does not necessarily induce a constant curvature metric
on $\Sph^n$ via Riemannian submersion $H\cdots G \to \Sph^n$.
Hence, $\gamma_v$ should actually just be called a curve.
Let $\tilde\gamma_v$ be the unique horizontal lift of $\gamma_v$
with $\tilde\gamma_v(0) = \imat$.
Then $\dot{\tilde\gamma}_v(0) = (v,\ast)$ is a matrix in the Lie algebra $\ag$ of $G$
whose first column is $v$.

\begin{lem}
We have $\tilde\gamma_v(t+\pi) = \tilde\gamma_v(t) \cdot \tilde\gamma_v(\pi)$,
in particular, $\tilde\gamma_v(k\pi) = \tilde\gamma_v(\pi)^k$.
\end{lem}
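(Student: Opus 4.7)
The plan is to set $c(t) := \tilde\gamma_v(t) \cdot \tilde\gamma_v(\pi)$ and verify that $c$ is a horizontal lift of the curve $t \mapsto \gamma_v(t+\pi)$ starting at $\tilde\gamma_v(\pi)$; uniqueness of horizontal lifts then forces $c(t) = \tilde\gamma_v(t+\pi)$. The initial condition $c(0) = \tilde\gamma_v(\pi)$ is immediate, and the projection check is a one-line computation exploiting the fact that $G \subset \SO(n+1)$ acts linearly on $\R^{n+1}$: $\pi(c(t)) = \tilde\gamma_v(t)\tilde\gamma_v(\pi) \cdot N = \tilde\gamma_v(t)\cdot(-N) = -\gamma_v(t) = \gamma_v(t+\pi)$.

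For horizontality at the point $c(t)$, I would write $\dot{\tilde\gamma}_v(t) = \tilde\gamma_v(t) \cdot Y(t)$ with $Y(t) \in \ah^\perp$; differentiating $c$ gives $\dot c(t) = c(t) \cdot \Ad(\tilde\gamma_v(\pi)^{-1}) Y(t)$, so one must show that $\Ad(\tilde\gamma_v(\pi)^{-1})$ preserves $\ah^\perp$. Because $\Ad$ is orthogonal for the biinvariant metric, this reduces to showing that $\tilde\gamma_v(\pi)$ normalizes $H$. I expect this normalizer statement to be the sole substantive step in the proof.

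The argument is purely linear-algebraic: $H = G_N$ fixes the vector $N \in \R^{n+1}$, and by linearity of the $\SO(n+1)$-action it automatically fixes $-N$ as well, so $G_{-N} = H$. On the other hand, since $\tilde\gamma_v(\pi) \cdot N = -N$, conjugation of stabilizers yields $G_{-N} = \tilde\gamma_v(\pi)\, H\, \tilde\gamma_v(\pi)^{-1}$, so $\tilde\gamma_v(\pi) \in N_G(H)$ as desired. The ``in particular'' clause is then an immediate induction on $k$: substituting $t = k\pi$ into the first identity gives $\tilde\gamma_v((k+1)\pi) = \tilde\gamma_v(\pi)^k \cdot \tilde\gamma_v(\pi) = \tilde\gamma_v(\pi)^{k+1}$.
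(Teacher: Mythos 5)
Your proposal is correct and follows essentially the same route as the paper: set $c(t) = \tilde\gamma_v(t)\cdot\tilde\gamma_v(\pi)$, check it projects to $\gamma_v(t+\pi)$ with $c(0)=\tilde\gamma_v(\pi)$, reduce horizontality to the fact that $\tilde\gamma_v(\pi)$ normalizes $H$, and deduce that from $G_{-N}=H$ (via linearity) combined with conjugacy of stabilizers. The only differences are cosmetic — you spell out the orthogonality of $\Ad$ with respect to the biinvariant metric and the two-sided inclusion $G_{-N}=H$, which the paper leaves implicit.
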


\begin{proof}
Since $\tilde\gamma_v$ is horizontal,
$\tilde\gamma_v(t)^{-1}\cdot \dot{\tilde\gamma}_v(t)$
is perpendicular to the Lie algebra $\ah$ of $H$.
Now consider the curve
\begin{gather*}
  \sigma(t) = \tilde\gamma_v(t) \cdot \tilde\gamma_v(\pi).
\end{gather*}
Since
\begin{gather*}
  \sigma(t) = (\gamma_v(t),\ast) \cdot \bsmat -1 & \m 0 \\ \m 0 & \m \ast \esmat
  = (-\gamma_v(t),\ast) = (\gamma_v(t+\pi),\ast),
\end{gather*}
the curve $\sigma(t)$ is a lift of $\gamma_v(t+\pi)$ with $\sigma(0) = \gamma_v(\pi)$.
Moreover,
\begin{gather*}
  \sigma(t)^{-1} \cdot \dot \sigma(t) =  \tilde\gamma_v(\pi)^{-1} \cdot
   \bigl(\tilde\gamma_v(t)^{-1}\cdot \dot{\tilde\gamma}_v(t) \bigr)
      \cdot \tilde\gamma_v(\pi).
\end{gather*}
Since $\gamma_v(\pi) = \bmat -1\\ 0\emat$ and the isotropy group at $\bmat -1\\ 0\emat$
is also $H$, the lift $\tilde\gamma_v(\pi)$ normalizes $H$. Hence, $\sigma$ is horizontal.
By uniqueness, $\sigma(t) = \tilde\gamma_v(t+\pi)$.
\end{proof}

\begin{cor}
\label{powerid}
We have
\begin{gather*}
  \partial(\power^k) = \partial(\id_{\Sph^n})^k.
\end{gather*}
where $\power^k : \Sph^n \to \Sph^n$ denotes the $k$-th power of $\Sph^n$.
\end{cor}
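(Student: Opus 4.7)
The plan is to reduce the claim to the iteration formula $\tilde\gamma_v(k\pi) = \tilde\gamma_v(\pi)^k$ supplied by the preceding lemma. The only additional ingredient is the observation that $\power^k \circ \gamma_v$ is simply a reparameterization of $\gamma_v$.

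First I would unpack the left-hand side. By the definition of $\partial$, the value $\partial(\power^k)(v)$ is the endpoint at parameter $t=\pi$ of the horizontal lift of $\power^k\circ\gamma_v$ starting at $\imat$. In the polar description $\gamma_v(t) = \bsmat \cos t\\ v\sin t\esmat$ used in the subsection on powers of spheres one has $\power^k\circ \gamma_v(t) = \gamma_v(kt)$, so the curve to be lifted is exactly $\gamma_v$ traversed $k$ times as fast.

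Next I would invoke reparameterization invariance of horizontality: since the horizontal distribution is a condition on tangent directions rather than on speeds, $t\mapsto \tilde\gamma_v(kt)$ is again horizontal and starts at $\imat$. Uniqueness of horizontal lifts therefore forces it to be the horizontal lift of $\power^k\circ \gamma_v$, and evaluation at $t=\pi$ gives $\partial(\power^k)(v) = \tilde\gamma_v(k\pi)$. Applying the preceding lemma yields $\tilde\gamma_v(k\pi) = \tilde\gamma_v(\pi)^k = \bigl(\partial(\id_{\Sph^n})(v)\bigr)^k$, which is precisely the value of $\partial(\id_{\Sph^n})^k$ at $v$.

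I do not expect any serious obstacle: the real work is the iteration identity of the previous lemma, and the remainder is bookkeeping of reparameterization. The only point worth a brief verification is that both sides indeed land in the correct fiber, namely $\pi^{-1}(\gamma_v(k\pi)) = \pi^{-1}((-1)^k N)$; this follows because $\tilde\gamma_v(\pi)$ sends $N$ to $-N$, so its $k$-th power sends $N$ to $(-1)^k N$, consistent with the fiber dictated by $\power^k(S)$ on the other side.
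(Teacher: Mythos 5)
Your proof is correct and is exactly the argument the paper has in mind: the paper states this as an immediate corollary of the preceding lemma without writing out the details, and you have supplied precisely those details (the reparameterization observation $\power^k\circ\gamma_v(t)=\gamma_v(kt)$, the invariance of horizontality under reparameterization, uniqueness of horizontal lifts, and the identity $\tilde\gamma_v(k\pi)=\tilde\gamma_v(\pi)^k$). The closing fiber-consistency check is a nice touch but not strictly necessary.
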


\begin{lem}
\label{equiv}
The characteristic map $\partial(\id_{\Sph^n}) : \Sph^{n-1} \to G_S = H$
is equivariant with respect to the $H$ action on $\Sph^{n-1}$ and the adjoint action on $H$.
\end{lem}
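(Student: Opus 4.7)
The plan is to exhibit the conjugate $\sigma(t) := h\cdot\tilde\gamma_v(t)\cdot h^{-1}$ as the horizontal lift $\tilde\gamma_{hv}$, so that evaluating at $t=\pi$ yields the desired equivariance
\begin{gather*}
  \partial(\id_{\Sph^n})(hv) \,=\, \sigma(\pi) \,=\, h\cdot\partial(\id_{\Sph^n})(v)\cdot h^{-1}.
\end{gather*}

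To identify $\sigma$ with $\tilde\gamma_{hv}$ I would check three things. First, $\sigma(0)=\imat$, which is immediate. Second, $\sigma$ is horizontal: a direct computation gives
\begin{gather*}
  \sigma(t)^{-1}\dot\sigma(t) \,=\, \Ad_h\bigl(\tilde\gamma_v(t)^{-1}\dot{\tilde\gamma}_v(t)\bigr),
\end{gather*}
and the inner expression lies in $\ah^\perp$ by horizontality of $\tilde\gamma_v$. Since the metric on $G$ is biinvariant, $\Ad_h$ acts as an isometry of $\ag$; as $\Ad_h$ preserves the subalgebra $\ah$, it preserves $\ah^\perp$ as well. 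Third, $\sigma$ projects to $\gamma_{hv}$: using $h^{-1}\cdot N = N$ one has
\begin{gather*}
  \pi(\sigma(t)) \,=\, h\,\tilde\gamma_v(t)\cdot N \,=\, h\cdot\gamma_v(t),
\end{gather*}
and since $h\in\SO(n+1)$ is an isometry of the round sphere, the curve $h\cdot\gamma_v$ is the geodesic with initial point $N$ and initial velocity $hv$, i.e.\ $\gamma_{hv}$. Uniqueness of the horizontal lift of a given base curve through $\imat$ then forces $\sigma = \tilde\gamma_{hv}$.

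I do not expect a serious obstacle: the whole argument is driven by uniqueness of horizontal lifts, and everything else is a direct consequence of the Lie-theoretic setup. The only point requiring care is the identity $\Ad_h(\ah^\perp) = \ah^\perp$, which rests squarely on the biinvariance of the metric on $G$ fixed at the outset of this subsection together with the obvious fact that $\Ad_H$ preserves $\ah$.
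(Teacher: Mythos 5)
Your proof is correct and follows essentially the same route as the paper's: both identify $h\,\tilde\gamma_v\,h^{-1}$ with $\tilde\gamma_{hv}$ by uniqueness of the horizontal lift through $\imat$, where horizontality of the conjugated curve comes from $\Ad_h$ being an isometry of $\ag$ (biinvariance) that preserves $\ah$ and hence $\ah^{\perp}$. Your write-up just makes explicit at the Lie-algebra level what the paper compresses into the observation that conjugation by $h$ is a bundle isometry because $G_{hq}=hG_qh^{-1}$.
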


\begin{proof}
The curves $\tilde\gamma_{hv}(t)$ and $h\tilde\gamma_v(t)h^{-1}$ are both horizontal,
the first one by definition, the second one because the isotropy group $G_{hp}$ is equal
to $hG_p h^{-1}$. Both curves start at the unit matrix $\imat$ and have initial velocity $hv$.
Hence they are identical. Evaluation at time $\pi$ yields the statement.
\end{proof}

\bigskip

\section{Identification of some characteristic maps}
\label{examples}
In this section we derive explicit formulas for the characteristic maps of the principal bundles
\begin{gather*}
  \SU(n)\cdots\SU(n+1)\to \Sph^{2n+1},\\
  \Syp(n)\cdots\Syp(n+1)\to\Sph^{4n+3},\\
  \SU(3)\cdots\Gtwo\to\Sph^6,\quad\text{ and }\quad
  \Gtwo\cdots\Spin(7)\to\Sph^7
\end{gather*}
using the horizontal lifting construction of section\,\ref{lifts}. We identify our maps
with maps previously given in the literature in other contexts. Finally, we prove
Theorem\,\ref{boundarymaps}. Note that the characteristic maps of the principal bundles
\begin{gather*}
  \SO(n+1) \to \Sph^{n},\quad
  \U(n+1) \to \Sph^{2n+1}, \quad
  \Syp(n+1) \to \Sph^{4n+3}
\end{gather*}
were already computed before 1950 by a different method, namely, by analyzing how the
bundle is glued from two local trivializations over the equatorial sphere (see \cite{steenrod}).
Both methods produce homotopic maps. Using horizontal lifts, however,
these maps come in a form that is much more suitable for our purposes.
In particular, they obey Corollary\,\ref{powerid}. Moreover, the special unitary case
was not considered in \cite{steenrod} and in this case we obtain the null-homotopy
$H_{\SU(3)\to\Sph^5}$ that is essential in our main construction.

\subsection{The characteristic map of the principal bundle $\SU(n+1)\to \Sph^{2n+1}$}
The characteristic map $\partial_{\SU(n+1)\to \Sph^{2n+1}}(\id)$ generates the homotopy group
\begin{gather*}
  \pi_{2n}(\SU(n))\approx\Z_{n!}.
\end{gather*}
(see \cite{bott} for the computation of this homotopy group). This follows easily by filling in only stable
homotopy groups into the exact homotopy sequence of the bundle.
In this subsection we identify $\partial_{\SU(n+1)\to \Sph^{2n+1}}(\id)$ with a map
$\phi$ given in \cite{unitary} and obtain the homotopy $H_{\SU(3)\to\Sph^5}$ of
Theorem\,\ref{finalthm}.

Let $\SU(n+1)$ denote the group of all complex $(n+1)\times (n+1)$-matrices $A$ with
$\bar A\tp A =\imat$. The group $\SU(n+1)$ acts transitively on the unit sphere $\Sph^{2n+1}$
in $\C^{n+1}$. We endow $\SU(n+1)$ with the up to scaling unique biinvariant Riemannian metric.
We consider the curve
\begin{gather*}
  \gamma(t) = \cos t \, \bsmat 1 \\ 0\esmat + \sin t \, \bsmat iy\\z\esmat
\end{gather*}
in $\Sph^{2n+1}$ with $y \in \R$, $z\in\C^n$, and $\abs{y}^2+\abs{z}^2 = 1$.
For $z \neq 0$ we set
\begin{multline*}
  \tilde \gamma(t) = \Biggl(
    \begin{pmatrix} 0 & 0\\ 0 & \, \imat -
      \smash[b]{\frac{z}{\abs{z}}\frac{\smash{\bar z\tp}}{\abs{z}}} \end{pmatrix}
    + \cos t \, \begin{pmatrix} 1 & 0\\ 0 & \; 
    \smash[b]{\frac{z}{\abs{z}} e^{iyt}
      \frac{\smash{\bar z\tp}}{\abs{z}}} \end{pmatrix} \\
    + \sin t \, \begin{pmatrix} iy & \; -\smash[t]{e^{iyt}\bar z\tp}\\ z & \;
    - \smash[b]{\frac{z}{\abs{z}} iy e^{iyt} 
    \frac{\smash{\bar z\tp}}{\abs{z}}} \end{pmatrix} \Biggr) \cdot
    \begin{pmatrix} 1 & 0\\ 0 & e^{-iyt/n}\,\imat \end{pmatrix}.
\end{multline*}
\begin{lem}
The curve $\tilde\gamma$ is the unique horizontal lift of $\gamma$ with respect to the
projection $\SU(n+1)\to\Sph^{2n+1}$ such that $\tilde \gamma(0) = \imat$.
In particular, the above formula for $\tilde\gamma$ extends analytically
to the case where $z=0$.
\end{lem}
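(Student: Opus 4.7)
For the proof, I will decompose $\C^{n+1} = V \oplus V^\perp$ with $V = \spann(e_1, u')$ where $u := z/\abs{z}$ and $u' := \bsmat 0\\ u\esmat$. A direct reading of the formula shows that the matrix $M(t)$ inside the large parentheses preserves this splitting, acts as $\imat$ on $V^\perp$, and acts in the ordered basis $(e_1, u')$ of $V$ as the $2\times 2$ block
\begin{gather*}
  M_2(t) = \bsmat \cos t + iy\sin t & -\abs{z}\sin t\cdot e^{iyt}\\ \abs{z}\sin t & (\cos t - iy\sin t)e^{iyt}\esmat.
\end{gather*}
A short check exploiting $y^2 + \abs{z}^2 = 1$ gives $M_2(t)\in \U(2)$ with $\det M_2(t) = e^{iyt}$. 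The factor $\diag(1, e^{-iyt/n}\imat_n)$ also preserves the splitting and has total determinant $e^{-iyt}$; hence $\tilde\gamma(t)\in \SU(n+1)$. Reading off the first column immediately gives $\pi\circ\tilde\gamma = \gamma$, and substituting $t = 0$ gives $\tilde\gamma(0) = \imat$.

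Next, for horizontality, the biinvariant metric identifies the horizontal complement at $\imat$ of the isotropy algebra $\ah = \{0\}\oplus\asu(n) \subset \asu(n+1)$ with matrices of the form $\bsmat ix & -\bar w\tp\\ w & -\tfrac{ix}{n}\imat_n\esmat$. By left-invariance of the metric, horizontality of $\tilde\gamma$ reduces to showing that the lower right $n\times n$ block of $A(t) := \tilde\gamma(t)^{-1}\dot{\tilde\gamma}(t)$ is a scalar multiple of $\imat_n$. Since $\tilde\gamma$ preserves both $V$ and $V^\perp$, so does $A(t)$, and the relation $\tilde\gamma(t)\vert_{V^\perp} = e^{-iyt/n}\imat$ gives $A(t)\vert_{V^\perp} = -\tfrac{iy}{n}\imat$. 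It remains to verify that the $(2,2)$-entry of $A(t)\vert_V$ in the basis $(e_1, u')$ is also $-\tfrac{iy}{n}$. This is the key algebraic step and the main obstacle of the proof: one inverts $M_2(t)\diag(1, e^{-iyt/n})$ explicitly, multiplies by its time derivative, and checks that the real and imaginary parts of the $(2,2)$-entry collapse to $0$ and $-y/n$, using $\abs{z}^2 = 1 - y^2$ and $y(n-1)/n - y = -y/n$ to eliminate all extraneous terms. Uniqueness of horizontal lifts with prescribed initial condition then identifies $\tilde\gamma$ with the horizontal lift of $\gamma$.

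For the analytic extension, the only appearance of the projector $P_u := z\bar z\tp / \abs{z}^2$ in $\tilde\gamma$ is through the combination $\imat + (f(y, t) - 1) P_u$ in the lower right $n\times n$ block, where $f(y, t) := (\cos t - iy\sin t)e^{iyt}$. Since $f(\pm 1, t) = 1$, the scalar $f(y, t) - 1$ is divisible by $1 - y^2$ as an entire function of $y$. On the constraint locus $\abs{z}^2 = 1 - y^2$ we may therefore rewrite $(f(y,t) - 1) P_u = \frac{f(y,t) - 1}{1 - y^2}\cdot z\bar z\tp$, which is jointly real analytic in $(y, z)$. The remaining entries of $\tilde\gamma(t)$ are already polynomial in $(y, z)$. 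At $z = 0$, where $y = \pm 1$, the formula reduces to $\diag(e^{iyt}, e^{-iyt/n}\imat_n)$, the one-parameter subgroup generated by the horizontal initial velocity $\bsmat iy & 0\\ 0 & -\tfrac{iy}{n}\imat_n\esmat$.
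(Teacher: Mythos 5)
Your proof is correct, and it is organized quite differently from the paper's. The paper's proof is a two-step direct computation: it observes the lift property from the first column, then asserts the closed form
\begin{gather*}
  \tilde\gamma(t)^{-1}\dot{\tilde\gamma}(t)
  = \begin{pmatrix} 1 & 0\\ 0 & e^{iyt/n}\,\imat \end{pmatrix}
   \begin{pmatrix} iy & -e^{iyt} \bar z\tp\\
   z e^{-iyt} & -\tfrac{iy}{n}\,\imat \end{pmatrix}
   \begin{pmatrix} 1 & 0\\ 0 & e^{-iyt/n}\,\imat \end{pmatrix}
\end{gather*}
and notes that the lower-right $n\times n$ block is a scalar multiple of $\imat$, hence perpendicular to $\asu(n)$; it does not verify membership in $\SU(n+1)$ separately, nor does it justify the analytic extension at $z=0$ beyond asserting it. Your decomposition $\C^{n+1} = V\oplus V^\perp$ with $V = \spann(e_1, u')$ buys three things in exchange for length: membership in $\SU(n+1)$ falls out of a $2\times 2$ determinant computation together with the factor $e^{-iyt}$; horizontality reduces from an $(n+1)\times(n+1)$ identity to checking a single scalar (the $(2,2)$-entry of $A(t)\vert_V$), since $A(t)\vert_{V^\perp} = -\tfrac{iy}{n}\imat$ is automatic from $\tilde\gamma\vert_{V^\perp}=e^{-iyt/n}\imat$; and it isolates the only singular appearance of $P_u$ in the single scalar factor $f(y,t)-1$ with $f(y,t)=(\cos t - iy\sin t)e^{iyt}$, whose vanishing at $y=\pm 1$ and hence divisibility by $1-y^2=\abs{z}^2$ gives a genuine proof of the analytic extension, which the paper leaves to the reader. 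The core algebraic verification is, of course, the same identity in both proofs; yours packages it so that the scalar nature of the lower-right block of $A(t)$ is structurally forced on $V^\perp$ and only needs to be checked on the $u'$-direction.
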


\begin{proof}
It is evident that $\tilde\gamma(t)\cdot \bmat 1\\ 0\emat = \gamma(t)$. Hence,
$\tilde\gamma$ is a lift of $\gamma$. A direct computation shows that
\begin{gather*}
  \tilde\gamma(t)^{-1}\cdot \dot{\tilde\gamma}(t)
  = \begin{pmatrix} 1 & 0\\ 0 & e^{iyt/n}\,\imat \end{pmatrix} \cdot
   \begin{pmatrix} iy & -e^{iyt} \bar z\tp\\
   z e^{-iyt} & -\tfrac{iy}{n}\,\imat \end{pmatrix}
   \cdot \begin{pmatrix} 1 & 0\\ 0 & e^{-iyt/n}\,\imat \end{pmatrix}.
\end{gather*}
Hence, $\tilde\gamma(t)^{-1}\cdot \dot{\tilde\gamma}(t)$ is always perpendicular
to the Lie subalgebra $\asu(n)\subset\asu(n+1)$. Since the Riemannian
metric on $\SU(n+1)$ is left invariant this means that $\dot{\tilde\gamma}(t)$
is perpendicular to the $\SU(n)$-fiber at $\tilde\gamma(t)$.
\end{proof}

We recall a simple formula for a map that generates $\pi_{2n}(\SU(n))$ from \cite{unitary}.
This map
\begin{gather*}
  \phi : [0,\tfrac{2\pi}{n}] \times \Sph^{2n-1} \longrightarrow \SU(n)
\end{gather*}
is defined by
\begin{gather*}
  \phi(\tau,z) = A\cdot\diag(e^{i(n-1)\tau},e^{-i\tau},\ldots,e^{-i\tau})\cdot A^{-1}
\end{gather*}
where $A\in \SU(n)$ is any matrix whose first column is $z\in\Sph^{2n-1}\subset\C^n$.
The values of $\phi$ are independent of $z\in\Sph^{2n-1}$ for $\tau=0$
and $\tau=\tfrac{2\pi}{n}$. Because of this collapse at the endpoints of the interval
the map $\phi$ induces a map $\Sph^{2n}\to\SU(n)$. More precisely,
we now identify the round sphere $\Sph^{2n} \subset \R\times\C^n$ with the
cylinder $[0,\tfrac{2\pi}{n}] \times \Sph^{2n-1}$ that is collapsed at the endpoints
of the interval by the map
\begin{gather*}
  \Sph^{2n} \to \,([0,\tfrac{2\pi}{n}] \times \Sph^{2n-1}) / \x\sim\,,\quad
  \bmat i y\\z\emat \mapsto
  \bigl( \tfrac{\pi}{n}(y+1),\tfrac{z}{\abs{z}} \bigr).
\end{gather*}

\begin{thm}
We have
\begin{gather*}
  \partial_{\SU(n+1)\to \Sph^{2n+1}}(\id)\begin{pmatrix} iy \\ z \end{pmatrix}
  = \begin{pmatrix} -1 & 0 \\ \m 0 & e^{i\pi/n} \; \imat \end{pmatrix}
  \cdot \begin{pmatrix} 1 & 0\\ 0 & \phi\bigl( \frac{\pi}{n}(y+1),\frac{z}{\abs{z}} \bigr) \end{pmatrix}.
\end{gather*}
\end{thm}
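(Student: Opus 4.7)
The plan is a direct matching computation: evaluate the left-hand side $\tilde\gamma(\pi)$ using the formula from the preceding lemma, rewrite $\phi$ in a coordinate-free spectral form via the Hermitian projector $P := \frac{z}{\abs{z}}\frac{\bar z\tp}{\abs{z}}$, and verify that the two expressions agree after absorbing the block-diagonal prefactor $\diag(-1,e^{i\pi/n}\imat)$.

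First I would substitute $t=\pi$ into the formula for the horizontal lift. Because $\sin\pi=0$ the off-diagonal third matrix vanishes, and because $\cos\pi=-1$ the first two matrices collapse into a block-diagonal expression. By definition of the horizontal boundary map, $\partial_{\SU(n+1)\to\Sph^{2n+1}}(\id)\bsmat iy\\z\esmat = \tilde\gamma(\pi)$, and the substitution yields
\begin{gather*}
  \tilde\gamma(\pi) \;=\; \bsmat -1 & 0 \\ \m 0 & (\imat - P) - e^{i\pi y}P\esmat \cdot \bsmat 1 & 0 \\ 0 & e^{-i\pi y/n}\imat\esmat.
\end{gather*}

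Next I would rewrite $\phi$ without reference to any choice of completion $A\in\SU(n)$. Decomposing the inner diagonal matrix as $e^{-i\tau}\imat + (e^{i(n-1)\tau}-e^{-i\tau})\diag(1,0,\ldots,0)$ and conjugating by $A$ replaces $A\diag(1,0,\ldots,0)A^{-1}$ by $P$, since the first column of $A$ is the unit vector $z/\abs{z}$. Therefore
\begin{gather*}
  \phi(\tau, z/\abs{z}) \;=\; e^{-i\tau}(\imat-P) + e^{i(n-1)\tau}P,
\end{gather*}
which is manifestly independent of the choice of $A$. Setting $\tau=\tfrac{\pi}{n}(y+1)$ and using $e^{i\pi(y+1)} = -e^{i\pi y}$ allows the factor $e^{-i\pi(y+1)/n}$ to be pulled out of both summands, so that $\phi\bigl(\tfrac{\pi}{n}(y+1),z/\abs{z}\bigr) = e^{-i\pi(y+1)/n}\bigl((\imat-P) - e^{i\pi y}P\bigr)$. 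Multiplying on the left by $\diag(-1,e^{i\pi/n}\imat)$ then combines the exponentials into the scalar $e^{-i\pi y/n}$, which commutes past the bracket and reproduces the expression for $\tilde\gamma(\pi)$ from the first step.

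There is no genuine obstacle: the argument amounts to bookkeeping of exponentials and the spectral form of $\phi$. The only mild subtlety is the degenerate case $z=0$, which forces $y=\pm1$; this is handled either by the analytic extension of $\tilde\gamma$ asserted in the preceding lemma combined with the continuity of both sides in $(y,z)$, or directly by noting that $\phi(0,\cdot)=\imat$ and $\phi(2\pi/n,\cdot)=e^{-2i\pi/n}\imat$ reproduce the horizontal lift $\diag(e^{iy\pi},e^{-iy\pi/n},\ldots,e^{-iy\pi/n})$ of the planar geodesic $\gamma(t)=(e^{iyt},0)$.
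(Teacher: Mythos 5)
Your proof is correct and follows essentially the same route as the paper: both rewrite $\phi$ in the coordinate-free form $\phi(\tau,z)=e^{-i\tau}\bigl(\imat + (e^{in\tau}-1)z\bar z\tp\bigr)$ (your spectral form $e^{-i\tau}(\imat-P)+e^{i(n-1)\tau}P$ is the same expression with $P=z\bar z\tp$), evaluate $\tilde\gamma(\pi)$ from the lemma's formula, and match exponentials. The only difference is that you spell out the bookkeeping the paper dismisses as ``straightforward evaluation'' and you explicitly check the degenerate case $z=0$, $y=\pm1$, which the paper leaves to the reader via the lemma's analytic-extension clause.
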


\begin{proof}
The following computation shows that $\phi$ in fact only depends on the first column $z$ of
the matrix $A$:
\begin{gather*}
  \phi(\tau,z) = e^{-i\tau}\cdot A\cdot
    \left(\imat + \diag(e^{in\tau}-1,0,\ldots,0)\bigr)\cdot A^{-1}
  =  e^{-i\tau}\bigl(\imat + z(e^{in\tau}-1)\bar z\tp\right).
\end{gather*}
Straightforward evaluations of $\tilde\gamma(\pi)$ and
$\phi\bigl( \frac{\pi}{n}(y+1),\frac{z}{\abs{z}} \bigr)$ now yield the statement.
\end{proof}

We consider now the special case $n=2$. Recall the following homotopy from~\cite{unitary}:
\begin{multline*}
  H_{\SU(3)\to\Sph^5}(\tau,z,s)
    = \bmat z & -\bar w\\ w & \m \bar z\emat \bmat e^{i\tau} & 0 \\ 0 & e^{-i\tau}\emat
     \bmat \m \bar z & \m \bar w\\ -w & \m  z\emat \cdot \\
      \bmat z & -\bar w\\ w & \m \bar z\emat \bmat \cos s & -\sin s\\ \sin s & \m \cos s\emat
      \bmat e^{i\tau} & 0 \\ 0 & e^{-i\tau}\emat \bmat \m \cos s & \m \sin s \\ -\sin s & \m \cos s\emat
     \bmat \m \bar z & \m \bar w\\ -w & \m  z\emat.
\end{multline*}

\begin{thm}[see \cite{unitary}]
\label{suhom}
The homotopy $H_{\SU(3)\to\Sph^5}$ deforms $\phi^2$ and hence the map
\begin{gather*}
  \left(\bmat -1 & \m 0 & \m 0\\ \m 0 & -i & \m 0\\ \m 0 & \m 0 & -i \emat
  \cdot \partial_{\SU(n+1)\to \Sph^{2n+1}}(\id)\right)^2
\end{gather*}
(for $s=0$) to the constant map to the unit matrix (for $s = \frac{\pi}{2}$).
\end{thm}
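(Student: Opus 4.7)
The plan is to verify that the explicitly given product formula for $H_{\SU(3)\to\Sph^5}(\tau,z,s)$ does exactly what the theorem claims, which reduces to recognizing its algebraic structure as a pair of diagonal conjugations that cancel at $s=\tfrac{\pi}{2}$. Writing $M_1 = \bmat z & -\bar w\\ w & \m\bar z\emat$ (with $(z,w)\in \Sph^3\subset \C^2$), $R(\tau)=\diag(e^{i\tau},e^{-i\tau})$, and $K(s) = \bmat \cos s & -\sin s\\ \sin s & \m\cos s\emat$, the given formula factorizes cleanly as
\begin{gather*}
H_{\SU(3)\to\Sph^5}(\tau,z,s) = \bigl(M_1\, R(\tau)\, M_1^{-1}\bigr)\cdot \bigl(M_1\, K(s)\, R(\tau)\, K(s)^{-1}\, M_1^{-1}\bigr),
\end{gather*}
and by the very definition of $\phi$ for $n=2$ (the matrix $A$ there can be taken to equal $M_1$, since any $\SU(2)$-matrix with first column $(z,w)\tp$ will do), the first factor equals $\phi(\tau,(z,w))$ regardless of $s$.

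Next I would compute the two boundary values. At $s=0$ one has $K(0)=\imat$, so the second factor collapses to $M_1R(\tau) M_1^{-1}=\phi(\tau,(z,w))$, giving $H(\tau,z,0)=\phi(\tau,(z,w))^2$. At $s=\tfrac{\pi}{2}$, a two-line computation shows $K(\tfrac{\pi}{2})R(\tau)K(\tfrac{\pi}{2})^{-1}=R(-\tau)=R(\tau)^{-1}$, so the second factor becomes $\phi(\tau,(z,w))^{-1}$ and the two factors cancel to $\imat$. This proves the first claim.

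For the ``hence'' clause I would invoke the formula derived just above, $\partial_{\SU(n+1)\to\Sph^{2n+1}}(\id) = C\cdot P$ with $C=\diag(-1, e^{i\pi/n}\imat)$ and $P=\diag(1,\phi)$. For $n=2$ the constant matrix $\bmat -1 & \m 0 & \m 0\\ \m 0 & -i & \m 0\\ \m 0 & \m 0 & -i\emat$ appearing in the theorem is precisely $C^{-1}$, so $C^{-1}\cdot\partial_{\SU(3)\to\Sph^5}(\id)=P$, and since $P$ is block-diagonal the pointwise square is $P^2=\diag(1,\phi^2)$. The $2\times 2$ homotopy $H_{\SU(3)\to\Sph^5}$, inserted into $\SU(3)$ as the lower right block of $\diag(1,\cdot)$, therefore deforms $P^2=\diag(1,\phi^2)$ to $\diag(1,\imat_2)=\imat_3$.

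The main obstacle, such as there is one, is minimal at this stage: the formula was \emph{designed} for each conjugation pattern to collapse. The real content lies upstream, in the construction of $H_{\SU(3)\to\Sph^5}$ in \cite{unitary}. What deserves attention here is only bookkeeping --- the $\SU(2)\hookrightarrow\SU(3)$ block embedding and the identity $K(\tfrac{\pi}{2})R(\tau)K(\tfrac{\pi}{2})^{-1}=R(-\tau)$, which is exactly what unwinds the double winding of $\phi^2$.
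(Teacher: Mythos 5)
Your verification is correct, and it is the natural (and presumably the same as the cited reference's) computation: read the paper's displayed formula for $H_{\SU(3)\to\Sph^5}$ as the product of the two conjugates $M_1R(\tau)M_1^{-1}$ and $M_1K(s)R(\tau)K(s)^{-1}M_1^{-1}$, observe that the second one interpolates from $\phi$ (at $s=0$, where $K(0)=\imat$) to $\phi^{-1}$ (at $s=\tfrac{\pi}{2}$, where conjugation by $K(\tfrac{\pi}{2})$ swaps the diagonal entries of $R(\tau)$), and finally use the factorization $\partial_{\SU(3)\to\Sph^5}(\id)=\diag(-1,i,i)\cdot\diag(1,\phi)$ together with $\diag(-1,-i,-i)=\diag(-1,i,i)^{-1}$ to transfer the null-homotopy of $\phi^2$ to the displayed map. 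The paper itself does not reproduce a proof here (it cites \cite{unitary}), so the only thing to check was that the bookkeeping is consistent, and it is.
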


\begin{rem}
The cases $n > 2$ were also treated in \cite{unitary}. Let $\eta: \Sph^{2n-1} \to \SU(n)$
be a map that generates $\pi_{2n-1}(\SU(n))$ and let $\eta_j$ denote its $j$-th column.
The $n$ maps $\phi\circ\Sigma\eta_j:\Sph^{2n}\to\SU(n)$ are all mutually homotopic
(by homotopies analogous to the above) and represent the $(n-1)!$-th power of the
generator $\phi$ of $\pi_{2n}(\SU(n))$. The maps $\phi\circ\Sigma\eta_j$ commute
mutually and their value-by-value product is the constant map to the identity matrix.
\end{rem}

\smallskip

\subsection{The characteristic map of the principal bundle $\Syp(n+1)\to \Sph^{4n+3}$}
The characteristic map of the principal bundle $\Syp(n+1)\to \Sph^{4n+3}$ generates
the homotopy group
\begin{gather*}
  \pi_{4n+2}(\Syp(n))\approx
  \begin{cases} \Z_{(2n+1)!}, & \text{if $n$ is even,}\\ \Z_{2(2n+1)!}, & \text{if $n$ is odd} \end{cases}
\end{gather*}
(see \cite{kervpont} for the computation of this homotopy group). This follows easily by filling in
only stable homotopy groups into the exact homotopy sequence of the bundle. In this subsection we
compute the characteristic map using the horizontal lifting construction of section\,\ref{lifts}.

Let $\Syp(n+1)$ denote the group of all quaternionic $(n+1)\times (n+1)$-matrices $A$
with $\bar A\tp A =\imat$.
The group $\Syp(n+1)$ acts transitively on the unit sphere $\Sph^{4n+3}$ in~$\H^{n+1}$.
We endow $\Syp(n+1)$ with the up to scaling unique biinvariant Riemannian metric.
We consider the curves
\begin{gather*}
  \gamma(t) = \cos t \, \bmat 1 \\ 0\emat + \sin t \, \bmat p\\u\emat
\end{gather*}
in $\Sph^{4n+3}$. Here, $p \in \Imag\H$ is purely
imaginary and $u\in\H^n$ is a vector such that $\abs{p}^2+\abs{u}^2 = 1$.
In other words the vector $\bmat p\\u\emat$ is contained in the unit sphere
$\Sph^{4n+2}$. For $u\neq 0$ we set
\begin{gather*}
  \tilde \gamma(t) =
    \Bsmat 0 & 0\\ 0 & \, \imat -
      \smash[b]{\tfrac{u}{\abs{u}}\tfrac{\smash{\bar u\tp}}{\abs{u}}} \Esmat
    + \cos t \, \bbsmat 1 & 0\\ 0 & \, 
    \smash[b]{\tfrac{u}{\abs{u}} e^{tp} \tfrac{\smash{\bar u\tp}}{\abs{u}}} \eesmat
    + \sin t \, \bbsmat p & \; -\smash{e^{tp} \bar u\tp}\\ u & \;
    - \smash[b]{\tfrac{u}{\abs{u}} p e^{tp} \tfrac{\smash{\bar u\tp}}{\abs{u}}} \eesmat,
\end{gather*}
where $e^{p} = \cos \abs{p} + \tfrac{p}{\abs{p}} \sin \abs{p}$ denotes the
exponential map of $\Sph^3\subset\H$ from $1$.

\begin{lem}[see \cite{duran} for $n=1$]
The curve $\tilde\gamma$ is the unique horizontal lift of $\gamma$ with respect to the
projection $\Syp(n+1)\to\Sph^{4n+3}$ such that $\tilde \gamma(0) = \imat$.
In particular, the above formula for $\tilde\gamma$ extends in an analytic way
to the case where $u=0$.
\end{lem}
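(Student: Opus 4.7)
The plan is to mimic the proof of the $\SU(n+1)\to\Sph^{2n+1}$ case just given. Setting $t=0$ in the formula collapses the three summands to $\bsmat 0 & 0\\ 0 & \imat - P \esmat + \bsmat 1 & 0\\ 0 & P \esmat = \imat$ with $P = \tfrac{u\bar u\tp}{|u|^2}$, verifying $\tilde\gamma(0) = \imat$. The first column of $\tilde\gamma(t)$ reads as $\bsmat \cos t + p\sin t\\ u\sin t\esmat = \gamma(t)$, so $\tilde\gamma$ is indeed a lift.

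The central step is the horizontality check. Since the biinvariant metric on $\Syp(n+1)$ is left-invariant, horizontality reduces to showing that $\tilde\gamma(t)^{-1}\cdot \dot{\tilde\gamma}(t) \in \asyp(n+1)$ has vanishing lower-right $n\times n$ block (the orthogonal complement of the isotropy subalgebra $\asyp(n)$ under the biinvariant metric). A direct matrix computation — routine but tedious, and in fact somewhat simpler than the $\SU$ case since $\Syp$ carries no determinant constraint and so needs no correction factor analogous to $\diag(1,e^{-iyt/n}\imat)$ — produces a skew-Hermitian element of $\asyp(n+1)$ whose $(2,2)$ block is identically zero. One can simultaneously verify $\tilde\gamma(t) \in \Syp(n+1)$ by checking that the columns of $\tilde\gamma(t)$ are Hermitian orthonormal, using the relation $|p|^2 + |u|^2 = 1$; the first-column normalization, for instance, reduces immediately to $\cos^2 t + (|p|^2 + |u|^2)\sin^2 t = 1$. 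Uniqueness of the horizontal lift is then the standard ODE fact.

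For the analytic extension at $u=0$, the apparently singular terms all live in the lower-right block, which collects into
\begin{equation*}
  \imat + \frac{u\bigl((\cos t - p\sin t)\,e^{tp}-1\bigr)\bar u\tp}{|u|^2}.
\end{equation*}
Setting $f(t) = (\cos t - p\sin t)\,e^{tp}$, the terms $\cos t\cdot p\, e^{tp}$ and $p\cos t\cdot e^{tp}$ cancel in $f'(t)$ (as $p$ commutes with $e^{tp}$), leaving $f'(t) = -(1+p^2)\sin t\cdot e^{tp} = -|u|^2 \sin t\cdot e^{tp}$ via $p^2 = -|p|^2$ and $|p|^2 + |u|^2 = 1$. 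Hence $f(t) - 1 = -|u|^2\int_0^t \sin s\cdot e^{sp}\,ds$, and the offending block reduces to $\imat - u\bigl(\int_0^t \sin s\cdot e^{sp}\,ds\bigr)\bar u\tp$, which is polynomial in $u$ with analytic dependence on $t, p$. At $u=0$ (so $|p|=1$) this specializes to $\tilde\gamma(t) = \bsmat e^{tp} & 0\\ 0 & \imat\esmat$, the horizontal one-parameter subgroup generated by $\bsmat p & 0\\ 0 & 0\esmat \in \asyp(n+1)$.

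The principal obstacle is the horizontality calculation: although conceptually parallel to the $\SU$ case, the non-commutativity of quaternions and the interaction of the rank-one projection $P = u\bar u\tp/|u|^2$ with powers of $e^{tp}$ require careful tracking of left-versus-right multiplication by quaternionic scalars, so that the off-diagonal entries of $\tilde\gamma(t)^{-1}\dot{\tilde\gamma}(t)$ end up genuinely adjoint-negatives of each other and the $(2,2)$-block cancellations go through.
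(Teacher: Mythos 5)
Your argument matches the paper's proof: verify the lift property from the first column, then compute $\tilde\gamma(t)^{-1}\dot{\tilde\gamma}(t)$ and check that its lower-right $\asyp(n)$-block vanishes — the paper records the explicit result
$\tilde\gamma(t)^{-1}\dot{\tilde\gamma}(t) = \Bsmat p & -e^{tp}\bar u\tp\\ u e^{-tp} & 0 \Esmat$, which you would be well advised to state rather than leave as ``routine but tedious.'' Your verification of the analytic extension at $u=0$ via $f(t)=(\cos t - p\sin t)e^{tp}$, $f'(t)=-|u|^2\sin t\, e^{tp}$ is correct and fills in a detail that the paper asserts without proof.
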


\begin{proof}
It is evident that $\tilde\gamma(t)\cdot \bmat 1\\ 0\emat = \gamma(t)$. Hence,
$\tilde\gamma$ is a lift of $\gamma$. A direct computation shows that
\begin{gather*}
  \tilde\gamma(t)^{-1}\cdot \dot{\tilde\gamma}(t)
  = \Bsmat p & -\smash{e^{tp} \bar u\tp}\\ u e^{\!-tp} & 0 \Esmat.
\end{gather*}
Hence, $\tilde\gamma(t)^{-1}\cdot \dot{\tilde\gamma}(t)$ is always perpendicular
to the Lie subalgebra $\asyp(n-1)\subset\asyp(n)$. Since the Riemannian
metric on $\Syp(n+1)$ is left invariant this means that $\dot{\tilde\gamma}(t)$
is perpendicular to the $\Syp(n-1)$-fiber at $\tilde\gamma(t)$.
\end{proof}

Note that the fiber of the bundle $\Syp(n+1) \to \Sph^{4n+3}$ over the south pole
$\bmat -1\\ 0\emat$ is $\bmat -1  & \m 0 \\ \m 0 & \m \Syp(n)\emat$. This fiber
can be canonically identified with the $\Syp(n)$ fiber over the north pole using
left multiplication by $\bmat -1 & \m 0\\ \m 0 & \m \imat\emat\in \Syp(n)$.

\begin{cor}[see \cite{duran} for $n=1$]
The characteristic map $\partial_{\Syp(n+1)\to \Sph^{4n+3}}(\id) : \Sph^{4n+2}\to \Syp(n)$
is given by
\begin{gather*}
  \partial_{\Syp(n+1)\to \Sph^{4n+3}}(\id)\bmat p\\ u\emat
   = \imat
    -  \smash[b]{\tfrac{u}{\abs{u}} (1+e^{\pi p}) \tfrac{\smash{\bar u\tp}}{\abs{u}}}
\end{gather*}
and hence, for $n = 1$, by
\begin{gather*}
  \partial_{\Syp(2)\to \Sph^{7}}(\id)\bmat p\\ u\emat
   =   -  \smash[b]{\tfrac{u}{\abs{u}} e^{\pi p} \tfrac{\smash{\bar u}}{\abs{u}}}.
\end{gather*}
\end{cor}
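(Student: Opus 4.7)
The plan is to simply evaluate the horizontal lift $\tilde\gamma(t)$ constructed in the previous lemma at the south pole $t=\pi$ and then transport the result back to the $\Syp(n)$-fiber over the north pole via the canonical identification.

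First I would substitute $\cos\pi = -1$ and $\sin\pi = 0$ into the explicit formula for $\tilde\gamma(t)$. The sine term vanishes entirely, and the surviving combination of the constant term and the cosine term organizes into a block-diagonal matrix of the form
\begin{gather*}
\tilde\gamma(\pi) = \begin{pmatrix} -1 & 0 \\ \m 0 & \; \imat - \tfrac{u}{\abs{u}}(1+e^{\pi p})\tfrac{\bar u\tp}{\abs{u}} \end{pmatrix}.
\end{gather*}
Since the bundle projection $\Syp(n+1)\to\Sph^{4n+3}$ sends this matrix to its first column $\bmat -1\\ 0\emat$, the lift indeed lands in the fiber over the south pole as expected. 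Multiplying on the left by the canonical identification $\bmat -1 & 0\\ 0 & \imat\emat$ between this fiber and the $\Syp(n)$-fiber over the north pole flips the top-left sign to $+1$ and leaves the lower $n\times n$ block unchanged, which yields the first asserted formula.

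For the case $n=1$ in the second formula, $u$ is a single quaternion, so the expression $\frac{u}{\abs{u}}\cdot\imat\cdot\frac{\bar u}{\abs{u}}$ collapses to $\frac{u\bar u}{\abs{u}^2}=1$. Expanding the general formula and cancelling the term $\imat-\frac{u}{\abs{u}}\frac{\bar u}{\abs{u}}=0$ against $\imat$ leaves precisely $-\frac{u}{\abs{u}}e^{\pi p}\frac{\bar u}{\abs{u}}$, as claimed.

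There is no real obstacle: the work was done in constructing and verifying the horizontal lift in the preceding lemma, and only routine bookkeeping remains. The one point that deserves a remark is that the formula, originally derived assuming $u\ne 0$, extends analytically to $u=0$ (in which case $\bmat p\\0\emat$ is a purely imaginary unit quaternion and the formula reads $\imat$ in the general case, respectively $-e^{\pi p}=1$ in the case $n=1$), matching the values obtained by taking limits as $\abs{u}\to 0$ of the rank-one perturbations.
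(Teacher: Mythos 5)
Your proposal is correct and follows exactly the route the paper intends: evaluate the explicit horizontal lift $\tilde\gamma$ from the preceding lemma at $t=\pi$, observe that the sine block vanishes while the constant and cosine blocks combine into the stated block-diagonal matrix, and then apply the canonical left-translation identification of the fiber over the south pole with $\Syp(n)$, which the paper spells out in the sentence immediately before the corollary. Your observation that the $n=1$ case follows from $\tfrac{u}{|u|}\tfrac{\bar u}{|u|}=1$ and your remark on the analytic extension to $u=0$ (already noted in the lemma) are both correct.
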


\smallskip

\subsection{The characteristic map of the principal bundle $\Gtwo\to \Sph^6$}
In this subsection we identify the characteristic map of the principal bundle $\Gtwo\to \Sph^6$
with the embedding $\eta:\Sph^5\to\SU(3)$ given in \cite{unitary} (and previously in slightly
different form in \cite{chaves} and \cite{bryant}).

Let $\Ca = \C\times \C^3$ denote the octonions with the product introduced in section\,\ref{oct}.
Octonionic conjugation is given by $\bmat z_0\\ z\emat \mapsto \bmat \bar z_0\\ -z\emat$.
Let $\Imag\Ca = i\R\times\C^3$ denote the imaginary octonions.
The compact Lie group $\Gtwo$ is the automorphism group of the octonions. It is a connected
subgroup of $\SO(\Imag\Ca)$ and acts transitively on the sphere $\Sph^6\subset \Imag\Ca$.
Let $\gamma: \R \to \Sph^6$ be the unit speed geodesic defined by
\begin{gather*}
  \gamma(t) = \bmat i\cos t \\ \sin t\\ 0\\ 0\emat.
\end{gather*}
Set
\begin{gather*}
  e_1(t) = \gamma(t), \quad
  e_2(t) = \bmat -i\sin t\\ \cos t\\ 0\\ 0\emat, \quad
  e_3(t) = \bmat 0\\ i\\ 0\\ 0\emat, \\
  e_4(2t) = \bmat 0\\ 0\\ \cos t\\ i\sin t\emat, \;
  e_5(2t) = \bmat 0\\ 0\\ i\cos t\\ \sin t\emat, \;
  e_6(2t) = \bmat 0\\ 0\\ -i\sin t\\ \cos t\emat, \;
  e_7(2t) = \bmat 0\\ 0\\ -\sin t\\ i\cos t\emat.
\end{gather*}
and define a curve $\tilde\gamma(t)$ in $\SO(\Imag\Ca)$ by mapping $e_k(0)$ to
$e_k(t)$ for each $k$.

\begin{lem}
With respect to the fibration $\Gtwo\to \Sph^6$, $A\mapsto A\bigl(e_1(0)\bigr)$,
the curve $\tilde\gamma$ is the unique horizontal lift of $\gamma$ with $\tilde\gamma(0) =\imat$.
\end{lem}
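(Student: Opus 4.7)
The plan is to verify three statements about $\tilde\gamma$: (a) it takes values in $\Gtwo$ rather than just $\SO(\Imag\Ca)$; (b) it projects to $\gamma$ under $A\mapsto A(e_1(0))$; (c) it is horizontal for the biinvariant metric. Claim (b) is tautological, since $\tilde\gamma(t)\cdot e_1(0) = e_1(t) = \gamma(t)$ by the defining relation, and $\tilde\gamma(0)$ fixes every $e_k(0)$.

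For (a), using that $\Gtwo = \mathrm{Aut}(\Ca) \cap \SO(\Imag\Ca)$, it suffices to check that the moving frame $\{e_k(t)\}$ obeys the same octonionic multiplication table as the standard frame $\{e_k(0)\}$. With the cross product formula of Section~\ref{oct}, each product $e_i(t) \cdot e_j(t)$ with $i < j$ is a short trigonometric expression that I would match against the corresponding linear combination of $\{e_k(t)\}$. The particular angular rates chosen for $e_4, \ldots, e_7$ --- signalled by the notation $e_k(2t)$ and moving at half the speed of $e_1, e_2$ --- are precisely what makes these identities close; they are exactly what forces the infinitesimal generator $\tilde\gamma(t)^{-1}\dot{\tilde\gamma}(t)$ to be a derivation of $\Ca$. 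I expect this to be the main technical step: each individual product is simple, but the collection requires careful bookkeeping.

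For (c), compute $X(t) := \tilde\gamma(t)^{-1}\dot{\tilde\gamma}(t) \in \ag_2$ by evaluating on the original frame through $X(t) e_k(0) = \tilde\gamma(t)^{-1} \dot e_k(t)$. Differentiating the explicit formulas shows that $X(t)$ is skew-symmetric, fixes $e_3$, and has nonzero matrix entries only in the three $2$-planes $\spann(e_1,e_2)$, $\spann(e_4,e_7)$, and $\spann(e_5,e_6)$. The subalgebra $\asu(3) \subset \ag_2$, on the other hand, fixes $e_1$ and acts by $\C$-linear traceless anti-Hermitian endomorphisms of $\C^3 = \spann(e_2,\ldots,e_7)$; in this basis its matrix entries live in the planes $(e_2,e_3)$, $(e_4,e_5)$, $(e_6,e_7)$ (the Cartan part) and in the three pairwise off-diagonal mixings among these complex coordinate slots. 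Since none of these $\asu(3)$-supports overlaps with the support of $X(t)$, the trace pairing $\tr(X(t) A)$ vanishes for every $A \in \asu(3)$, and horizontality follows. Uniqueness of the horizontal lift is then a standard ODE consequence.
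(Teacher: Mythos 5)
Your overall structure matches the paper: (a) check the moving frame respects the octonionic multiplication, (b) note the lift is tautological, (c) establish horizontality by pairing the velocity against $\asu(3)$. For (a) the paper is more economical---it only checks four products, $e_1(t)\cdot e_2(t)=e_3(t)$, $e_5(t)=e_1(t)\cdot e_4(t)$, $e_6(t)=e_2(t)\cdot e_4(t)$, $e_7(t)=-e_3(t)\cdot e_4(t)$, which suffice to pin down the whole table since $\tilde\gamma(t)$ is already known to be orthogonal---and for (c) the paper first observes that $\tilde\gamma$ is a one-parameter subgroup, hence a geodesic of the biinvariant metric, so horizontality need only be verified at $t=0$ (making your computation of $X(t)$ for general $t$ unnecessary; it is constant anyway).

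However, there is a genuine error in your step (c). You claim the supports of $X=\dot{\tilde\gamma}(0)$ and of $\asu(3)$ inside $\aso(\Imag\Ca)$ do not overlap, and conclude $\tr(XA)=0$ for free. That is false. With $(e_2,e_3)$, $(e_4,e_5)$, $(e_6,e_7)$ being the realifications of the three complex coordinate slots, the $(2,3)$ off-diagonal block of a generic $A\in\asu(3)$, coming from $a_{23}=p+qi$, occupies rows $e_4,e_5$ and columns $e_6,e_7$ with real $2\times 2$ block $\bsmat p & -q\\ q & \m p\esmat$; in particular $A_{47}=-q$ and $A_{56}=q$ are generically nonzero. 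These are exactly the positions where $X$ has support (you correctly identified $X$ as rotating in $\spann(e_4,e_7)$ and $\spann(e_5,e_6)$). So the supports do overlap, and "disjointness" cannot be the reason the pairing vanishes. The pairing does vanish, but by cancellation: writing out the overlap contribution gives $\tr(XA)=-2X_{47}A_{47}-2X_{56}A_{56}$, and since $X_{47}=X_{56}=-\tfrac12$ while $\C$-linearity forces $A_{47}+A_{56}=0$, the two terms cancel. The conceptual reason is that on $\spann(e_4,\ldots,e_7)$ the endomorphism $X$ is $\C$-\emph{anti}-linear (it sends $w_2\mapsto \tfrac{i}{2}\bar w_3$), and anti-linear skew maps are trace-orthogonal to $\C$-linear ones. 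So your conclusion is correct but the justification needs this repair; the argument as stated would be caught by any reader who writes out the $(2,3)$ block of $\asu(3)$ explicitly.

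Finally, you omit the uniqueness clause with the phrase "standard ODE consequence," which is fine, but you should also note somewhere that the one-parameter-subgroup observation is what allows the horizontality check to be done at a single time.
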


\begin{proof}
It is easily verified using the octonionic multiplication from section\,\ref{oct} that
$e_1(t) \cdot e_2(t) = e_3(t)$, $e_5(t) = e_1(t)\cdot e_4(t)$, $e_6(t) = e_2(t)\cdot e_4(t)$,
and $e_7(t) = -e_3(t)\cdot e_4(t)$ for all times $t$. Hence, the curve $\tilde\gamma$
is contained in $\Gtwo$. Clearly, $\tilde\gamma$ is a lift of $\gamma$. It is also
easily verified that $\tilde\gamma$ is a one parameter subgroup of $\SO(\Imag\Ca)$.
Hence, $\tilde\gamma$ is a geodesic in $\Gtwo$ with respect to the up to scaling
unique biinvariant metric. It is a standard fact that in order to verify whether a geodesic
is horizontal, it suffices to verify horizontality at one point of time.
The fiber over $\gamma(0) = \imat$ is the natural embedding of $\SU(3)$ into $\SO(\Imag\Ca)$
as the automorphism group of the complex cross product of section\,\ref{oct}.
It is straightforward to verify that $\dot{\tilde\gamma}(0)$ is perpendicular to this $\SU(3)$ fiber.
\end{proof}

We now recall the embedding $\eta:\Sph^5\to \SU(3)$ from \cite{unitary} (see also the previous papers
\cite{chaves} and \cite{bryant}):
Let $\Sph^5$ be the unit sphere in $\C^3$. Set
\begin{gather*} 
  \eta(z) = zz\tp + \bmat 0 & -\bar z_3 & \bar z_2\\ \bar z_3 & 0 & -\bar z_1\\ -z_2 & \bar z_1 & 0\emat.
\end{gather*}
Note that $\eta(Az) = A\eta(z)A\tp$ for $A \in \SU(3)$ and that $\eta$ generates $\pi_5(\SU(3))$.
Let $\theta \in \SO(\Imag\Ca)$ denote complex (not octonionic) conjugation on
$\Imag\Ca = i\R \times\C^3$. The fiber of the bundle $\Gtwo\to\Sph^6$ over $\bmat -i\\ 0\emat$
is evidently $\SU(3)\cdot \theta$ since $\theta\bmat i\\ 0\emat = \bmat -i\\ 0\emat$.

\begin{thm}
\label{eta}
We have
\begin{gather*}
  \partial_{\Gtwo\to \Sph^6}(\id)(z) = \eta(-iz)\cdot \theta.
\end{gather*}
\end{thm}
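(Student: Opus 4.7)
The plan is to verify the formula at a single point of $\Sph^5$ via an explicit computation with the horizontal lift of the preceding lemma, and then propagate the identity to all of $\Sph^5$ using Lemma~\ref{equiv}.

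First I would pin down the base case $z = (1,0,0) \in \Sph^5 \subset \C^3$. The initial velocity of $\gamma$ is $\dot\gamma(0) = e_2(0)$, which under the identification $T_N\Sph^6 \cong \C^3$ at the north pole $N = e_1(0)$ corresponds precisely to the first standard basis vector of $\C^3$. Hence $\partial_{\Gtwo\to\Sph^6}(\id)(1,0,0) = \tilde\gamma(\pi)$. I would evaluate $\tilde\gamma(\pi)$ by plugging $t=\pi$ into the formulas for $e_1(t),\ldots,e_7(t)$, taking into account that $e_4,\ldots,e_7$ are parameterized at double speed. The resulting $\SO(\Imag\Ca)$-matrix sends $e_k(0)$ for $k = 1,\ldots,7$ to
\[
  -e_1(0),\ -e_2(0),\ e_3(0),\ e_7(0),\ e_6(0),\ -e_5(0),\ -e_4(0),
\]
respectively. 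Packaging the pairs $(e_2,e_3)$, $(e_4,e_5)$, $(e_6,e_7)$ into the complex coordinates $z_1,z_2,z_3$ of $\C^3$, the resulting action on $\Imag\Ca = i\R\times \C^3$ reads $(ix,z_1,z_2,z_3)\mapsto (-ix,-\bar z_1,-i\bar z_3,i\bar z_2)$. A direct computation of $\eta(-i(1,0,0))\cdot\theta$ from the formulas for $\eta$ and $\theta$ produces the same map, so the identity holds at $z = (1,0,0)$.

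Next I would extend by equivariance. For arbitrary $z \in \Sph^5$, pick $A \in \SU(3)$ with $A\cdot(1,0,0) = z$. Lemma~\ref{equiv} applied to the bundle $\Gtwo\to\Sph^6$ (whose isotropy at $N$ is the natural $\SU(3)\subset\Gtwo$) yields $\partial(\id)(z) = A\,\partial(\id)(1,0,0)\,A^{-1}$, which by the base case equals $A\,\eta(-i(1,0,0))\,\theta\,A^{-1}$. Combining the covariance $\eta(Aw) = A\,\eta(w)\,A\tp$ recalled just before the statement with the elementary identity $\theta A^{-1}\theta^{-1} = \overline{A^{-1}} = A\tp$, valid for $A\in\SU(3)$ since $\theta$ is complex conjugation, rearranges this expression to $\eta(-iA\cdot(1,0,0))\,\theta = \eta(-iz)\,\theta$, as claimed.

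The main obstacle is the bookkeeping in the base case: correctly tracking the permutation of $e_4,\ldots,e_7$ caused by the parameter rate $2t$, and then repackaging the result as an action on the complex coordinates of $\C^3$. Once this is done, both the base case and the equivariant extension reduce to routine algebraic manipulations with the definitions of $\eta$, $\theta$, and the covariance property of $\eta$.
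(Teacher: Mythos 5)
Your proof is correct and takes essentially the same approach as the paper: verify the formula at the single point where the explicit horizontal lift $\tilde\gamma$ was computed, then propagate to all of $\Sph^5$ via the $\SU(3)$-equivariance from Lemma~\ref{equiv} together with the covariance $\eta(Aw)=A\eta(w)A\tp$ and the relation $\theta A^{-1}\theta^{-1}=A\tp$. The paper compresses the base-case bookkeeping into ``straightforward evaluation shows $\tilde\gamma(\pi)=\eta\bsmat -i\\ 0\esmat\cdot\theta$,'' which you have simply made explicit.
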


\begin{proof}
Straightforward evaluation shows $\tilde\gamma(\pi) = \eta\bmat -i\\ 0\emat\cdot\theta$.
By Lemma\,\ref{equiv} the characteristic map is equivariant with respect to the transitive action
of $\SU(3)$ on $\Sph^5$ and to the action of $\SU(3)$ on $\SU(3)\cdot \theta$ by conjugation.
We have
\begin{multline*}
  \partial_{\Gtwo\to \Sph^6}(\id)\left(A\cdot\bmat -i\\ 0\emat\right)
  = A \cdot \eta\bmat -i\\ 0\emat \cdot \theta\cdot A^{-1} \\
  = A \cdot \eta\bmat -i\\ 0\emat \cdot A\tp\cdot \theta
  = \eta \left(-i \,A\cdot\bmat -i\\ 0\emat\right)\cdot \theta
\end{multline*}
since $\eta(Az) = A\eta(z)A\tp$.
\end{proof}

\begin{rem}
Theorem\,\ref{eta} in particular implies the known fact that $\partial_{\Gtwo\to \Sph^6}(\id)$
generates $\pi_5(\SU(3)) \approx\Z$. This can be used to give an elementary proof of the
known fact that $\pi_5(\Gtwo)$ is trivial (see \cite{mimura}) just by inspecting the
relevant part of the exact homotopy sequence of the bundle $\Gtwo\to \Sph^6$.
\end{rem}

\begin{rem}
Observe that $\partial_{\Gtwo\to \Sph^6}(\id)^4 = \imat$, while $\partial_{\Gtwo\to \Sph^6}(\id)$
generates an infinite cyclic homotopy group. This is reflected in the fact that when transfering
the boundary map from the fiber over the south pole to the fiber over the north pole the
equivariance changes from conjugation to twisted conjugation (twisted by an outer automorphism
of $\SU(3)$).
\end{rem}

\begin{rem}
It is known (see, e.g.,~\cite{kollross}) that the action of $\SU(3)\times\SU(3)$ on $\Gtwo$
by left and right translations is of cohomogeneity one. The geodesic $\tilde\gamma$
is a normal geodesic for this action, i.e., perpendicular to all orbits.
\end{rem}

\begin{rem}
The embedding $\eta$, i.e., essentially the characteristic map
$\partial_{\Gtwo\to \Sph^6}(\id)$, defines a calibrated submanifold of $\SU(3)$.
This result is due to R.~Bryant \cite{bryant} who classified all codimension\,3
cycles of compact Lie groups that are calibrated by the Hodge dual of the
fundamental $3$-form $(X,Y,Z) \mapsto \langle X,[Y,Z]\rangle$.
\end{rem}

\subsection{The characteristic map of the principal bundle $\Spin(7)\to \Sph^7$}
Chaves and Rigas obtained in \cite{rigas} an embedding $\psi:\Sph^6\to \Gtwo$
that genrates $\pi_6(\Gtwo)\approx\Z^3$. This embedding parametrizes the adjoint
orbit of $\Gtwo$ through one of the elements in the center of the subgroup
$\SU(3)\subset\Gtwo$ and is hence minimal. In this subsection we obtain the
embedding $\psi$ by the horizontal lifting construction. This approach is essential
for the subsequent proof of Theorem\,\ref{boundarymaps}.

Let $\Ca$ denote the normed division algebra of the octonions.
We use the triality model of $\Spin(8)$:
\begin{gather*}
  \Spin(8) = \{ (A,B,C) \in\SO(\Ca)^3 \cond A(x\cdot y) = B(x)\cdot C(y)
  \text{ for all $x,y\in\Ca$}\}
\end{gather*}
and consider the natural homomorphism $\Spin(8)\to\SO(8)$ given by the
projection to the first factor. From this homomorphism we get the standard
transitive action of $\Spin(8)$ on the unit octonions $\Sph^7$.
The condition $A(1) = 1$ defines a subgroup $\Spin(7)\subset\Spin(8)$,
which acts transitively on the unit octonions by
\begin{gather*}
  \Spin(7) \times \Sph^7 \to \Sph^7, \quad
  (A,B,C) \cdot x  = B(x).
\end{gather*}
The isotropy group of $1\in\Ca$ with respect to this action is the automorphism
group of the octonions
\begin{gather*}
  \Gtwo = \{(A,A,A) \in \Spin(7) \}.
\end{gather*}
From the action of $\Spin(7)$ we therefore get the $\Gtwo$-principal bundle $\Spin(7)\to\Sph^7$ in the usual way: $(A,B,C) \mapsto (A,B,C)\cdot 1 = B(1)$.

\smallskip

Let $\gamma$ be a geodesic in the unit octonions with $\gamma(0) = 1$ and
$\gamma(\pi) = -1$. In order to give a formula for the horizontal lift of $\gamma$
to $\Spin(7)$ we use the standard notation for the left multiplication, right
multiplication, and conjugation on $\Ca$ with a fixed octonion $a$:
\begin{gather*}
  L_{a}(x) = ax,\quad
  R_{a} (x) = xa,\quad
  C_{a}(x) = axa^{-1}.
\end{gather*}
Since the octonions form a normed division algebra the transformations
$L_{a}$, $R_{a}$, and $C_{a}$ are contained in $\SO(\Ca)$.

\begin{lem}
\label{spinlift}
The horizontal lift of $\gamma$ through the unit element of $\Spin(7)$ is
given~by
\begin{gather*}
  \tilde \gamma(3t) = \bigl(C_{\gamma(t)}, L_{\gamma(t)}\circ R_{\gamma(t)^2},
  L_{\bar\gamma(t)^2} \circ R_{\bar\gamma(t)} \bigr)\,.
\end{gather*}
Here, $\bar\gamma(t)$ is identical to $\gamma(t)^{-1}$ since $\gamma(t)$
is a unit octonion.
\end{lem}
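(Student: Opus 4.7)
The plan is to verify three properties that together characterize the horizontal lift: (i) $\tilde\gamma(3t)\in\Spin(7)$ for every $t$, (ii) the curve projects to $\gamma(3t)$ via $(A,B,C)\mapsto B(1)$, and (iii) it is horizontal for a biinvariant metric on $\Spin(7)$.

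Property (ii) is immediate from Artin's theorem: the subalgebra generated by the single octonion $\gamma(t)$ is associative, so $\gamma(t)\cdot 1\cdot \gamma(t)^2 = \gamma(t)^3$ is unambiguous, and since $\gamma$ is a geodesic from $1$ in $\Sph^7$ it has the form $e^{tv}$ for a unit imaginary octonion $v$, whence $B(1) = \gamma(t)^3 = \gamma(3t)$.

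For (i), one checks $C_{\gamma(t)}(1) = 1$ trivially, placing the triple in $\Spin(7) \subset \Spin(8)$; what remains is the nontrivial triality identity $\gamma(xy)\bar\gamma = (\gamma x\gamma^2)(\bar\gamma^2 y \bar\gamma)$. I propose to verify this using the decomposition $\Ca = \C\oplus\C^3$ of section \ref{oct}, arranged so that $\gamma = \bsmat z_0\\ 0\esmat$ with $|z_0|=1$. The product formulas of section \ref{oct} then give, for $x = \bsmat x_0\\ x'\esmat$ and $y = \bsmat y_0\\ y'\esmat$,
\[
  B(x) = \bsmat z_0^3\,x_0\\ z_0\,x'\esmat,\qquad
  C(y) = \bsmat \bar z_0^3\,y_0\\ z_0\,y'\esmat,
\]
and expanding $B(x)\cdot C(y)$ via the definition of octonionic multiplication, using $|z_0|^2=1$ and the homogeneity $(z_0 \xi)\times(z_0 \eta) = \bar z_0^2\,(\xi\times\eta)$, reproduces the expected $A(xy) = \bsmat (xy)_0\\ \bar z_0^2\,(xy)'\esmat$ componentwise.

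For (iii), I observe that $\tilde\gamma$ is a one-parameter subgroup in $\Spin(7)$: because $\gamma(t)\gamma(s) = \gamma(t+s)$ and powers of $\gamma$ associate with every other octonion by Artin, a short componentwise check gives $\tilde\gamma(3t)\cdot \tilde\gamma(3s) = \tilde\gamma(3(t+s))$. Hence $\tilde\gamma$ is a geodesic for any biinvariant metric on $\Spin(7)$, and horizontality need only be verified at a single time, exactly as in the $\Gtwo\to\Sph^6$ lemma earlier in this section. Differentiating at $t=0$ produces the initial velocity $(L_v-R_v,\, L_v+2R_v,\, -2L_v-R_v)$. The Lie algebra of $\Gtwo$ sits diagonally in $\mathfrak{spin}(8)$ as $\{(D,D,D) : D \text{ is a derivation of }\Ca\}$, and the biinvariant inner product on $\mathfrak{spin}(8)$ is (up to a scalar) the sum of the three component-wise trace pairings; so the pairing with any $(D,D,D)$ is proportional to $\tr\bigl([(L_v-R_v)+(L_v+2R_v)+(-2L_v-R_v)]\,D\bigr) = \tr(0\cdot D) = 0$, making $\dot{\tilde\gamma}(0)$ perpendicular to the Lie algebra of $\Gtwo$.

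The main obstacle is the triality identity of step (i); the remaining steps follow the pattern already established for the $\Gtwo$-bundle in the previous subsection.
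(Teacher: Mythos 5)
Your proposal is correct and follows the same global strategy as the paper's proof: show the expression lies in $\Spin(8)$, then in $\Spin(7)$, that it projects to $\gamma(3t)$, that it is a one-parameter subgroup and hence a geodesic, and verify horizontality only at $t=0$ via the trace pairing on $\aso(\Ca)^{\times 3}$, using the observation that the three component operators $(L_v-R_v)$, $(L_v+2R_v)$, $(-2L_v-R_v)$ sum to zero. The genuine difference is in step (i). The paper establishes the triality identity
\begin{gather*}
  a(xy)\bar a = (axa^2)(\bar a^2 y\bar a)
\end{gather*}
abstractly for every unit octonion $a$ via a chain of Moufang manipulations, whereas you propose to verify it by direct componentwise computation in the $\C\oplus\C^3$ model after arranging $\gamma(t) = \bsmat z_0(t)\\ 0\esmat$. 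Your formulas for $A$, $B$, $C$ in that model are correct, and the homogeneity $(z_0\xi)\times(z_0\eta)=\bar z_0^2(\xi\times\eta)$ together with $|z_0|=1$ does close the computation. The one thing left tacit in your write-up is the justification for the ``arranged so that'' step: the lemma must hold for an arbitrary geodesic from $1$ to $-1$, so you are implicitly invoking the transitivity of the $\Gtwo$-action on $\Sph^6\subset\Imag\Ca$ (and the fact that conjugation by the diagonal $\Gtwo\subset\Spin(8)$ carries $\tilde\gamma$ for one geodesic to $\tilde\gamma$ for its $\Gtwo$-image) to reduce to the case $\gamma\subset\C$. Once that WLOG is stated, your coordinate argument is a legitimate and more elementary alternative to the Moufang computation, and it has the nice side benefit of making the one-parameter-subgroup property in step (iii) an immediate consequence of the explicit triple $(z_0^{-2},z_0^3,\,z_0)$-scaling of the components, rather than another appeal to Artin. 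The Moufang route is coordinate-free and slightly shorter; yours dovetails better with the complex cross product model used throughout section \ref{oct}.
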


\begin{proof}
It follows from the Moufang identities
\begin{gather*}
  L_{aba} = L_{a}\circ L_{b}\circ L_{a},\quad
  R_{aba} = R_{a}\circ R_{b}\circ R_{a}
\end{gather*}
that the expression for $\tilde\gamma(3t)$ above is contained in $\Spin(8)$:
\begin{gather*}
  a(x y)\bar a = a \bigl( x (a(\bar a y))\bigr)\bar a
  = \bigl((axa)(\bar a y)\bigr)\bar a
  = \bigl(\bigl((a x a^2)\bar a \bigr)(\bar a y)\bigr)\bar a
  = (a x a^2)(\bar a^2 y\bar a).
\end{gather*}
Moreover, $\tilde\gamma(3t)$ is contained in $\Spin(7)$ since $C_{a}(1) = 1$.
The formula for $\tilde\gamma$ apparently defines a one parameter subgroup
of $\Spin(7)$ and hence a geodesic. This geodesic projects to the geodesic
$\gamma$ of $\Sph^7$ since
\begin{gather*}
  (C_{a}, L_{a}\circ R_{a^2}, L_{\bar a^2} \circ R_{\bar a} )
   \cdot 1 = a^3.
\end{gather*}
It remains to show that $\tilde\gamma$ is horizontal. Since we already know that
$\tilde\gamma$ is a geodesic, it suffices to verify that $\Dot{\tilde\gamma}(0)$
is perpendicular to the $\Gtwo$-fiber over $1\in\Sph^7$. For this purpose we
note that the inner product of two endomorphisms
$Y,Z\in\aso(\Ca)\times\aso(\Ca)\times\aso(\Ca)$ is given by $-\tr (Y\circ Z)$.
We have $\dot\gamma(0) = p\in\Sph^6\subset\Imag\Ca$. Hence,
\begin{gather*}
  \Dot{\Tilde\gamma}(0) = (L_p - R_p, L_p+2R_p, -2L_p -R_p)
  \in \aso(\Ca)\times\aso(\Ca)\times\aso(\Ca).
\end{gather*}
The Lie algebra of $\Gtwo$ consists of endomorphisms of the form
$(X,X,X)\in\aso(\Ca)\times\aso(\Ca)\times\aso(\Ca)$. It is now obvious that
$\tr(\Dot{\Tilde\gamma}(0)\circ (X,X,X))$ vanishes, since the sum of the three
components of $\Dot{\Tilde\gamma}(0)$ vanishes.
Therefore, $\tilde\gamma$ is horizontal.
\end{proof}

Let $\Sph^6\subset\Imag(\Ca)$ denote the imaginary unit octonions.
For any geodesic $\gamma_v(t)$ with $v\in\Sph^6$
we have $\gamma_v(\pi) = -1$. Thus, $\tilde\gamma_v(\pi)$ is contained in the
$\Gtwo$-fiber $\Spin(7)_{-1}\approx \Gtwo$. Since $(\imat,-\imat,-\imat)\in\Spin(7)$
left translates the north pole $1\in\Sph^7$ to the south pole $-1$ we have
$\Spin(7)_{-1} = (\imat,-\imat,-\imat)\cdot\Gtwo$.
\begin{cor}
\label{charspin}
The characteristic map $\partial_{\Spin(7)\to\Sph^7}(\id): \Sph^6 \to \Spin(7)_{-1}$
is given by
\begin{align*}
  v \mapsto & \bigl(C_{\gamma_v(\pi/3)},
  L_{\gamma_v(\pi/3)} \circ R_{\gamma_v(2\pi/3)},
  L_{\gamma_v(-2\pi/3)} \circ R_{\gamma_v(-\pi/3)}\bigr)\\
  =& (C_{\gamma_v(-2\pi/3)}, -C_{\gamma_v(-2\pi/3)},
  -C_{\gamma_v(-2\pi/3)}),
\end{align*}
in particular, $\bigl(\partial_{\Spin(7)\to\Sph^7}(\id)\bigr)^3 \equiv (\imat,-\imat,-\imat)$
and $\bigl(\partial_{\Spin(7)\to\Sph^7}(\id)\bigr)^6 \equiv (\imat,\imat,\imat)$.
\end{cor}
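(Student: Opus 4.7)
The plan is to specialize Lemma~\ref{spinlift} at $3t=\pi$ and then exploit the fact that $\gamma_v$ generates a complex subalgebra of $\Ca$. For $v\in\Sph^6\subset\Imag\Ca$ the curve $\gamma_v(t)=\cos t+v\sin t$ lies inside the associative subalgebra $\R\oplus\R v$ and is a one-parameter subgroup, so in particular $\gamma_v(t)^2=\gamma_v(2t)$ and $\overline{\gamma_v(t)^2}=\gamma_v(-2t)$. Substituting $t=\pi/3$ into the formula
\begin{equation*}
  \tilde\gamma_v(3t)=\bigl(C_{\gamma_v(t)},\,L_{\gamma_v(t)}\circ R_{\gamma_v(t)^2},\,L_{\bar\gamma_v(t)^2}\circ R_{\bar\gamma_v(t)}\bigr)
\end{equation*}
from Lemma~\ref{spinlift} immediately yields the first displayed expression for $\partial_{\Spin(7)\to\Sph^7}(\id)(v)$.

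Next I would collapse each component to the form $\pm C_{\gamma_v(-2\pi/3)}$. Setting $a=\gamma_v(\pi/3)$, the one-parameter subgroup identity gives $a^3=\gamma_v(\pi)=-1$, so $a^2=-a^{-1}$ and $\bar a^2=-a$. By Artin's theorem the subalgebra of $\Ca$ generated by $a$ together with any single element $x$ is associative, hence
\begin{equation*}
  L_a\circ R_{a^2}(x)=a(xa^2)=axa^2=ax(-a^{-1})=-C_a(x),
\end{equation*}
and analogously $L_{\bar a^2}\circ R_{\bar a}(x)=\bar a^2 x\bar a=(-a)xa^{-1}=-C_a(x)$. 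Finally $\gamma_v(-2\pi/3)=-a$, and conjugation by $-a$ coincides with conjugation by $a$, so $C_{\gamma_v(-2\pi/3)}=C_a$. This produces the second displayed form of the characteristic map.

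For the power statements, the same associativity inside the subalgebra generated by $a$ shows that $C_a^k=C_{a^k}$, so $C_a^3=C_{-1}=\imat$. Componentwise cubing of the triple $(C_a,-C_a,-C_a)$ therefore yields $(\imat,-\imat,-\imat)$, a constant independent of $v\in\Sph^6$, and squaring again gives $(\imat,\imat,\imat)$. The only subtle step in the whole argument is the use of Artin's theorem to convert the iterated left/right multiplications $L_a R_{a^2}$ and $L_{\bar a^2}R_{\bar a}$ into conjugations; verifying that the constant triples belong to $\Spin(7)$ is immediate from the triality relation $A(xy)=B(x)C(y)$ applied to $(A,B,C)=(\imat,-\imat,-\imat)$ together with the condition $A(1)=1$.
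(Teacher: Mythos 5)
Your argument is correct, and it fills in the details that the paper leaves implicit, since Corollary~\ref{charspin} is stated without a proof as an immediate consequence of Lemma~\ref{spinlift}. Specializing the lift formula at $3t=\pi$, using $\gamma_v(t)^2=\gamma_v(2t)$ and $\bar\gamma_v(t)=\gamma_v(-t)$ on the one-parameter subgroup, and then invoking Artin's theorem to reassociate $L_a\circ R_{a^2}$ and $L_{\bar a^2}\circ R_{\bar a}$ into $\pm C_a$ with $a^3=-1$ is exactly the right chain of deductions; the identities $C_{-a}=C_a$ and $C_a^k=C_{a^k}$ (again from associativity of the subalgebra generated by $a$ and $x$) finish it. One small shortcut for the two power statements: since $\tilde\gamma_v(t+\pi)=\tilde\gamma_v(t)\cdot\tilde\gamma_v(\pi)$ and $\partial(\id)^k(v)=\tilde\gamma_v(k\pi)$, you can also just substitute $t=\pi$ and $t=2\pi$ directly into the lift formula, getting $(C_{-1},L_{-1}\circ R_{1},L_{1}\circ R_{-1})=(\imat,-\imat,-\imat)$ and $(C_{1},L_{1}\circ R_{1},L_{1}\circ R_{1})=(\imat,\imat,\imat)$ without needing $C_a^k=C_{a^k}$, but your route is equally valid.
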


\begin{lem}
The characteristic map $\partial_{\Spin(7)\to\Sph^7}(\id): \Sph^6 \to \Spin(7)_{-1}$
generates the homotopy group $\pi_6(\Gtwo)\approx\Z_3$.
\end{lem}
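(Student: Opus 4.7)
The plan is to read off the result from the long exact homotopy sequence of the principal $\Gtwo$-bundle $\Gtwo\cdots\Spin(7)\to\Sph^7$:
\begin{gather*}
  \pi_7(\Spin(7))\to\pi_7(\Sph^7)\xrightarrow{\partial_*}\pi_6(\Gtwo)\to\pi_6(\Spin(7))\to\pi_6(\Sph^7)=0.
\end{gather*}
By the lemma of section\,\ref{lifts} identifying Duran's horizontal-lift construction with the abstract connecting homomorphism, $\partial_*[\id_{\Sph^7}]=[\partial_{\Spin(7)\to\Sph^7}(\id)]$. Since $\pi_7(\Sph^7)=\Z$ is cyclic generated by $[\id_{\Sph^7}]$ and $\pi_6(\Gtwo)\approx\Z_3$, it is enough to show $\partial_*$ is surjective, for then the image of a generator is a generator.

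Surjectivity of $\partial_*$ is equivalent, via exactness and $\pi_6(\Sph^7)=0$, to the classical vanishing $\pi_6(\Spin(7))=0$, which I would cite (e.g.\,from Mimura--Toda). A consistency check comes for free from Corollary\,\ref{charspin}: the value-by-value cube of $\partial_{\Spin(7)\to\Sph^7}(\id)$ is the constant map to $(\imat,-\imat,-\imat)$, and after translating the coset $\Spin(7)_{-1}$ onto $\Gtwo$ this gives $3\cdot[\partial_{\Spin(7)\to\Sph^7}(\id)]=0$ in $\pi_6(\Gtwo)$, compatible with the class having order exactly $3$.

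A self-contained alternative, foreshadowed by the remark on the Chaves--Rigas embedding $\psi$, would be to identify the horizontally-lifted characteristic map directly with $\psi$. Corollary\,\ref{charspin} writes it, up to a central translation, as conjugation in $\Ca$ by the unit octonion $\gamma_v(-2\pi/3)=-\tfrac12-\tfrac{\sqrt 3}{2}v$, which is an element of algebraic order three. As $v$ ranges over $\Sph^6$ this parametrizes the adjoint orbit of $\Gtwo$ through a primitive cube root of unity in $\SU(3)\subset\Gtwo$, which is precisely $\psi$, known from \cite{rigas} to generate $\pi_6(\Gtwo)\approx\Z_3$. The main obstacle on this route is the bookkeeping needed to pass from the triality triple $(C_a,-C_a,-C_a)\in\Spin(7)_{-1}$ to an honest automorphism of $\Ca$ and match it with $\psi$; the exact-sequence route avoids this and is what I would actually write up.
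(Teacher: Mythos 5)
Your exact-sequence argument is precisely the paper's proof: both read off the conclusion from the segment $\Z\approx\pi_7(\Sph^7)\to\pi_6(\Gtwo)\to\pi_6(\Spin(7))\approx 0$ of the long exact homotopy sequence of $\Spin(7)\to\Sph^7$. The extra material you add (explicitly invoking the lemma identifying Duran's lift with the connecting map, the $\psi$-based alternative, the consistency check via Corollary\,\ref{charspin}) is sound but not needed; the paper states the same argument more tersely.
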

\begin{proof}
This follows immediately from the exact homotopy sequence of the principal bundle
$\Spin(7)\to\Sph^7$:
\begin{gather*}
   \Z \approx \pi_{7}(\Sph^{7})
   \, \longrightarrow\, \pi_{6}(\Gtwo)
      \,\longrightarrow\, \pi_{6}(\Spin(7)) \approx 0.\qedhere
\end{gather*}
\end{proof}

Chaves and Rigas \cite{rigas} obtained the map
\begin{gather*}
  \psi: \Sph^6 \to \Gtwo, \quad
  p \mapsto (C_{\exp(-2\pi p/3)}, C_{\exp(-2\pi p/3)}, C_{\exp(-2\pi p/3)})
\end{gather*}
which parametrizes the adjoint orbit of $\Gtwo$ through one of the elements in the center
of $\SU(3)\subset\Gtwo$ and thus represents the homotopy group $\pi_6(\Gtwo)\approx\Z_3$
in the most geometric way possible: $\psi^{3j}$ is the constant map to the identity matrix
in $\Gtwo$, while the embeddings $\psi^{3j+1}$ and $\psi^{3j+2}$ represent the two
nontrivial classes in $\pi_6(\Gtwo)$.

\begin{thm}
We have
\begin{gather*}
  \partial_{\Spin(7)\to\Sph^7}(\id) = (\imat,-\imat,-\imat) \cdot \psi.
\end{gather*}
\end{thm}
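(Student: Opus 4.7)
The plan is to simply combine Corollary \ref{charspin} with the explicit formula for geodesics on $\Sph^7$ and then verify that left multiplication by $(\imat,-\imat,-\imat)$ in the triality model of $\Spin(7)$ converts the output of $\psi$ into the second form of the characteristic map obtained in Corollary \ref{charspin}.

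First I would note that for $v \in \Sph^6 \subset \Imag\Ca$, the unit speed geodesic on the octonion unit sphere with $\gamma_v(0) = 1$ and $\dot\gamma_v(0) = v$ is given by the exponential in the associative subalgebra $\R[v] \cong \C$, namely
\begin{gather*}
  \gamma_v(t) = \cos t + v \sin t = \exp(tv).
\end{gather*}
In particular $\gamma_v(-2\pi/3) = \exp(-2\pi v/3)$, so the inner conjugations appearing in Corollary \ref{charspin} and in the definition of $\psi$ are based on the same octonion.

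Next I would compute the group multiplication in the triality model. Since $\Spin(7)$ multiplication is componentwise composition, left multiplication by $(\imat,-\imat,-\imat)$ sends $(C_a,C_a,C_a)$ to $(C_a,-C_a,-C_a)$. Applying this to $\psi(v) = (C_{\exp(-2\pi v/3)}, C_{\exp(-2\pi v/3)}, C_{\exp(-2\pi v/3)})$ gives exactly the expression
\begin{gather*}
  (C_{\gamma_v(-2\pi/3)}, -C_{\gamma_v(-2\pi/3)}, -C_{\gamma_v(-2\pi/3)})
\end{gather*}
identified with $\partial_{\Spin(7)\to\Sph^7}(\id)(v)$ in Corollary \ref{charspin}.

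Since everything reduces to the two algebraic observations above, I do not expect any genuine obstacle; the only point worth double-checking is that $(\imat,-\imat,-\imat)$ indeed lies in $\Spin(7)$ and represents the element that translates the fiber over the north pole to the fiber over the south pole in our chosen model, which was already established in the discussion preceding Corollary \ref{charspin}.
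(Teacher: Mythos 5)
Your argument is correct and matches what the paper leaves implicit: the theorem is an immediate consequence of Corollary~\ref{charspin} once one observes that $\gamma_v(t) = \exp(tv)$, so the paper simply states the theorem without a separate proof. You have merely written out the componentwise multiplication in the triality model and the identification $\gamma_v(-2\pi/3) = \exp(-2\pi v/3)$, which is exactly the intended verification.
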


With this identity we can, of course, also recover from Corollary\,\ref{charspin} that
$\psi^3$ is the constant map to the identity matrix. Moreover,
it follows from Lemma\,\ref{equiv} that $\partial_{\Spin(7)\to\Sph^7}(\id)$ is
$\Gtwo$-equivariant and hence it is easy to recover that the map $\psi$
indeed parametrizes an adjoint orbit of $\Gtwo$ through one of the elements
in the center of $\SU(3)\subset \Gtwo$.

\smallskip

\subsection{Proof of Theorem\,\ref{boundarymaps}}
\label{geomidentity}
Identify $\Sph^7$ with the unit sphere in the octonions $\Ca$ and let $V_{7,2}$
denote the Stiefel manifold of orthonormal $2$-frames in $\R^7 \approx \Imag\Ca$.
Let $e_1, e_2,\ldots$ denote an orthonormal basis of $\R^7 \approx \Imag\Ca$.
Define the map
\begin{gather*}
  \kappa : \Sph^7 \to V_{7,2}, \quad
  a \mapsto (\bar a e_1 a, \bar a e_2 a).
\end{gather*}
Identify $\Syp(2)$ with the subgroup of matrices
$(A,B,C)$ in $\Spin(7)\subset \SO(\Ca)\times\SO(\Ca)\times\SO(\Ca)$, such that,
$A\cdot e_1 = e_1$ and $A\cdot e_2 = e_2$, i.e., with $\Spin(5)$.

Let $\gamma_v$ denote the geodesic of $\Sph^7$
with $\gamma_v(0) = N$ and $\dot \gamma_v(0) = v$, $\abs{v} = 1$.
Apply Lemma\,\ref{doublehor} to the commutative diagram
\begin{gather*}
  \begin{CD}
    \Sph^3 @>>> \Gtwo @>>>  \Gtwo/\Sph^3 \\
     @VVV    @VVV @|\\
    \Syp(2) @>>> \Spin(7) @>>> \Spin(7)/\Syp(2) @= V_{7,2}\\
    @VV{\pi}V     @VV{\pi}V\\
    \Sph^7 @= \Sph^7
  \end{CD}
\end{gather*}
This shows that the curves
\begin{gather*}
  \tilde\delta_v(t) = \tilde\gamma_v^{\Spin(7)}(t)^{-1}\cdot \tilde\gamma_v^{\Syp(2)}(t)
\end{gather*}
in $\Gtwo$ are horizontal with respect to the fibration $\Gtwo\to V_{7,2}$.
The curves $\tilde\delta_v$ satisfy $\tilde\delta_v(0) = \imat$ and
$\tilde\delta_v(6\pi) = \tilde\gamma_v^{\Syp(2)}(6\pi)$ since
$\tilde\gamma_v^{\Spin(7)}(6\pi) = \imat$ by Lemma\,\ref{spinlift}.
The two column vectors of the projected curve $\delta(t)$
in $V_{7,2}$ are given by $\tilde\delta_v(t) \cdot e_1$ and $\tilde\delta_v(t) \cdot e_2$.
Here, $\tilde\delta_v(t) \in\Gtwo$ has three identical entries in
$\SO(\Imag \Ca) \times \SO(\Imag\Ca)\times \SO(\Imag\Ca)$.
Trivially, all of these act identically on $e_1$ and $e_2$. In order to evalute the
result it is suitable to choose the first component since this is adepted to the
embedding of $\Syp(2)$ into $\Spin(7)$. We obtain
\begin{gather*}
  \tilde\delta_v(3t) \cdot e_1
  = \tilde\gamma_v^{\Spin(7)}(3t)^{-1}\cdot \tilde\gamma_v^{\Syp(2)}(3t)\cdot e_1
  = \tilde\gamma_v^{\Spin(7)}(3t)^{-1}\cdot e_1
  = \overline{\gamma_v(t)}\cdot e_1\cdot \gamma_v(t)
\end{gather*}
and hence
$\tilde\delta_v(6t) \cdot e_1 = \overline{\gamma_v(2t)}\cdot e_1\cdot \gamma_v(2t)$.
Similarly,
$\tilde\delta_v(6t) \cdot e_2 = \overline{\gamma_v(2t)}\cdot e_2\cdot \gamma_v(2t)$.

\begin{rem}
Toda, Saito, and Yokota proved that the map $\kappa$ generates
$\pi_7(V_{7,2}) \approx \Z_4$. Our construction above yields a
direct argument for this fact:
The map $\kappa$ projects to a map $\Sph^7\to \Sph^6$ that generates $\pi_7(\Sph^6)$
since it can be deformed to the fourth suspension of the Hopf fibration,
see subsection\,\ref{kappatoh}.
The claim follows from the relevant part of the exact homotopy sequence of the fibration
$\Sph^5 \cdots V_{7,2} \to \Sph^6$ (see the introduction).
\end{rem}

\bigskip

\section{The precise relation between the commutator and the Duran map}
\label{Scommutator}
In this section we perform Construction\,\ref{homeq}.
The domain of definition of the commutator of unit quaternions is $\Sph^3\times\Sph^3$.
The commutator factors through the smash product $\Sph^3\wedge\Sph^3$.
It is elementary that the smash product $\Sph^3\wedge\Sph^3$ is homeo\-morphic to $\Sph^6$
since $\Sph^3\wedge\Sph^3$ is the one-point compactification of $\R^3\times\R^3$.
For our purposes, however, this homeomorphism is not appropriate, since it does
not have any direct relations to the characteristic map $\partial_{\Syp(2)\to\Sph^7}(\id)$.
Instead, we extract from Duran's explicit formula for $\partial_{\Syp(2)\to\Sph^7}(\id)$
a suitable homotopy equivalence between $\Sph^3\wedge\Sph^3$ and $\Sph^6$.

\smallskip

As usual, we define the one point union
\begin{gather*}
  \Sph^3 \vee\Sph^3 = \{1\}\times\Sph^3\cup \Sph^3\times \{1\}
\end{gather*}
and the smash product by
\begin{gather*}
  \Sph^3\wedge\Sph^3 =
    (\Sph^3\times\Sph^3) / (\Sph^3 \vee\Sph^3),
\end{gather*}
The value of the commutator $[a,b]$ is always $1$ if $a = 1$ or $b= 1$
thus the commutator $\comm$ factors through the smash product. Set
\begin{gather*}
  \iota: \Sph^6 \to \Sph^3\wedge\Sph^3, \quad
  \bsmat p\\ u\esmat \mapsto (u/\abs{u}, -e^{\pi p}).
\end{gather*}
Here, $\Sph^6$ denotes the unit sphere in $\Imag \H\times\H$ and
$e^{p}$ denotes the exponential map of $\Sph^3\subset \H$ at the point $1$
(note that $T_1\Sph^3 = \Imag \H$).

\begin{lem}
The map $\iota : \Sph^6\to \Sph^3\wedge\Sph^3$ is continuous.
\end{lem}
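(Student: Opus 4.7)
The plan is straightforward: the formula for $\iota$ is continuous at every point where $u\neq 0$ since it is built from the normalization $u\mapsto u/\abs{u}$, the quaternionic exponential, and sign negation, all of which are continuous there. The only points requiring attention are those $\bsmat p\\ u\esmat\in\Sph^6$ with $u=0$.

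First I would check the value at such a point. If $u=0$ then $\abs{p}=1$, and the definition of the quaternionic exponential gives
\begin{gather*}
  -e^{\pi p} = -\bigl(\cos\pi + p\sin\pi\bigr) = 1.
\end{gather*}
Thus, regardless of what the (undefined) first coordinate would be, the pair $(u/\abs{u},-e^{\pi p})$ has second coordinate equal to $1\in\Sph^3$, and therefore lies in $\Sph^3\vee\Sph^3$, which is collapsed to the basepoint of $\Sph^3\wedge\Sph^3$. Declaring $\iota(p,0)$ to be this basepoint yields a well-defined extension.

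For continuity at such a point, I would take an arbitrary sequence $\bsmat p_n\\ u_n\esmat\to\bsmat p\\0\esmat$ in $\Sph^6$ with $u_n\neq 0$ (the case $u_n=0$ is trivial). The first coordinates $u_n/\abs{u_n}$ remain in the compact set $\Sph^3$, while continuity of the quaternionic exponential yields $-e^{\pi p_n}\to -e^{\pi p}=1$. Hence the second coordinate of $\iota(p_n,u_n)\in\Sph^3\times\Sph^3$ converges to $1$, meaning these points approach the closed subset $\Sph^3\times\{1\}\subset\Sph^3\vee\Sph^3$.

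The final, purely topological, step is to invoke the definition of the quotient topology on the smash product: a neighborhood basis of the basepoint of $\Sph^3\wedge\Sph^3$ is given by images of open subsets of $\Sph^3\times\Sph^3$ containing $\Sph^3\vee\Sph^3$. Any sequence in $\Sph^3\times\Sph^3$ whose second coordinate tends to $1$ eventually enters every such neighborhood, so its image descends to a sequence converging to the basepoint, which is $\iota(p,0)$. I do not expect any real obstacle here; the only point of care is ensuring the quoted basis description of the quotient topology, which follows from the fact that $\Sph^3\times\Sph^3$ is compact Hausdorff and $\Sph^3\vee\Sph^3$ is closed.
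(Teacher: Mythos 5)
Your proof is correct. It proceeds by the same underlying geometric observation as the paper — that as $u\to 0$ the image of $\iota$ is forced toward $\Sph^3\vee\Sph^3$ because the second coordinate $-e^{\pi p}$ tends to $1$ — but packages the conclusion differently. The paper first equips the smash product with the explicit quotient metric
$\bar d([x],[y]) = \min\{d(x,y),\ d(x,\Sph^3\vee\Sph^3)+d(y,\Sph^3\vee\Sph^3)\}$,
asserts that it induces the quotient topology, and then derives continuity from the estimate $d(\iota(p,u),\Sph^3\vee\Sph^3)\to 0$. You instead work directly with the quotient topology: you note that open sets around the basepoint of $\Sph^3\wedge\Sph^3$ are exactly the images of open sets $U\subset \Sph^3\times\Sph^3$ containing $\Sph^3\vee\Sph^3$, and you use compactness of $\Sph^3$ (effectively the tube lemma) to conclude that once the second coordinate is close enough to $1$ the point lies in any such $U$. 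Both arguments are correct and elementary; yours avoids the metrizability step entirely, while the paper's quotient metric is more quantitative and is set up anyway for reuse. One small remark: your expression $-\bigl(\cos\pi + p\sin\pi\bigr)$ implicitly uses $\abs{p}=1$ to substitute $\pi\abs{p}=\pi$ in the formula $e^{\pi p}=\cos(\pi\abs{p})+\tfrac{p}{\abs{p}}\sin(\pi\abs{p})$; that is valid here, but it is worth stating since $\abs{p}=1$ is exactly what is being used.
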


\begin{proof}
The map $\iota$ is well-defined since the first $\Sph^3$-factor collapses
if $u=0$ and hence $\abs{p}=1$. The smash product $\Sph^3\wedge\Sph^3$
inherits a canonical distance function $\bar d$ from the standard metric $d$ on
$\Sph^3\times\Sph^3$:
\begin{gather*}
  \bar d([x],[y]) =
    \min\bigl\{d(x,y), d(x,\Sph^3\vee\Sph^3) + d(y,\Sph^3\vee\Sph^3)\bigr\}.
\end{gather*}
The distance function $\bar d$ induces the quotient topology on
$\Sph^3\wedge\Sph^3$. Since
\begin{gather*}
  d(\iota\bmat p\\u\emat,\Sph^3\vee\Sph^3)\to 0
\end{gather*}
if $u\to 0$ the map $\iota$ is continuous.
\end{proof}

It is easy to see that $\iota$ is surjective: If $a,b \in \Sph^3$ with
$a \neq 1$ and $b \neq 1$ then the equation
\begin{gather*}
  \iota\bmat p\\u\emat = [(u/\abs{u}, -e^{\pi p})] = [(a,b)]
\end{gather*}
has precisely one solution.
If $a = 1$ or $b = 1$ then the solutions of the latter equation are precisely
of the form $\bmat p\\u\emat$ with $u\in\R$, $u\ge 0$. This is a three dimensional
disk $D^3$ in $\Sph^6$. The map $\iota$ hence factors  as follows:
\begin{gather*}
  \Sph^6 \longrightarrow \Sph^6/D^3 \stackrel{\lambda}\longrightarrow \Sph^3\wedge\Sph^3.
\end{gather*}
The second map $\lambda$ is continuous and bijective and therefore a homeomorphism.
Its inverse is the map
\begin{gather*}
  \lambda^{-1}: (a,b) \mapsto \bmat p\\ a \sqrt{\smash[b]{1-\abs{p}^2}}\emat, \quad
  \text{where $p$ is defined by $b = -e^{\pi p}$.}
\end{gather*}
The projection map $\Sph^6 \to \Sph^6/D^3$ is a homotopy equivalence by standard constructions.
Since this essential here, we present the details:
Let $f:\Sph^6\times [0,1]\to\Sph^6$ be a homotopy with $f_0 = \id_{\Sph^6}$,
$f_s(D^3)\subset D^3$ and such that  $f_1(D^3)$ consists only of one point.
An explicit formula for such a homotopy is, for example, given by
\begin{gather*}
   f_s\bmat p\\ u\emat = \begin{cases}
   \bmat 0\\ \Real u + s(1+\Real u)\emat +
     \sqrt{\tfrac{1-(\Real u + s(1+\Real u))^2}{1-(\Real u)^2}}
   \bmat p \\ \Imag u \emat & \text{for $\Real u \le \frac{1-s}{1+s}$,}\\
   \bmat 0 \\ 1\emat & \text{for $\Real u \ge \tfrac{1-s}{1+s}$}.
  \end{cases}
\end{gather*} 
We have a commutative diagram
\begin{gather*}
  \begin{CD}
    \Sph^6 @>{f_t}>> \Sph^6\\
     @VVV    @VVV \\
    \Sph^6/D^3 @>>> \Sph^6/D^3 \end{CD}
\end{gather*}
with $f_0 = \id$ and such that for $s=1$ the induced map $\Sph^6/D^3\to \Sph^6/D^3$
lifts to a map $\hat f_1:\Sph^6/D^3\to \Sph^6$. We have seen:
\begin{lem}
The projection map $\Sph^6 \to \Sph^6/D^3$ is a homotopy equivalence with
homotopy inverse $\hat f_1$. Hence, the map $\iota : \Sph^6\to \Sph^3\wedge \Sph^3$
is a homotopy equivalence with homotopy inverse $\mu := \hat f_1\circ \lambda^{-1}$.
\end{lem}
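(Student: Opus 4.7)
The plan is to unpack the two assertions in sequence. Write $\pi:\Sph^6\to\Sph^6/D^3$ for the projection and recall that the stated homotopy $f:\Sph^6\times[0,1]\to\Sph^6$ satisfies $f_0 = \id_{\Sph^6}$, $f_s(D^3)\subset D^3$ for every $s$, and $f_1(D^3) = \{q\}$ for a single point $q\in\Sph^6$. The latter property makes $f_1$ constant on $D^3$, so by the universal property of the quotient it descends to a continuous map $\hat f_1:\Sph^6/D^3 \to\Sph^6$ with $\hat f_1\circ\pi = f_1$. The first thing to verify is that this universal-property step really applies to the explicit formula for $f_s$; this is an immediate inspection, because for $\Real u \ge (1-s)/(1+s)$ (which includes all of $D^3$ once $s=1$) the formula outputs the constant point $\bsmat 0\\1\esmat$.

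Next, I would establish the two homotopies required for a homotopy equivalence. The composite $\hat f_1\circ\pi$ equals $f_1$ by construction, and $f$ itself is a homotopy from $f_0 = \id_{\Sph^6}$ to $f_1 = \hat f_1\circ\pi$, giving $\hat f_1\circ\pi\simeq \id_{\Sph^6}$. For the other composite, the condition $f_s(D^3)\subset D^3$ means that at every time $s$ the map $f_s$ sends the collapsed disk into itself, so $f_s$ descends to a continuous map $\bar f_s:\Sph^6/D^3\to\Sph^6/D^3$, and the family $\bar f:(\Sph^6/D^3)\times[0,1]\to\Sph^6/D^3$ is continuous because $f$ is and the quotient map is open on saturated sets. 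One has $\bar f_0 = \id_{\Sph^6/D^3}$ and $\bar f_1 = \pi\circ\hat f_1$, yielding $\pi\circ\hat f_1\simeq\id_{\Sph^6/D^3}$.

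Having shown that $\pi$ is a homotopy equivalence with inverse $\hat f_1$, the second statement is formal: we have already established that $\lambda:\Sph^6/D^3\to\Sph^3\wedge\Sph^3$ is a homeomorphism and that $\iota = \lambda\circ\pi$, so $\iota$ is a homotopy equivalence whose inverse is $\pi^{-1}\circ\lambda^{-1} = \hat f_1\circ\lambda^{-1} = \mu$, as claimed. The main obstacle, if any, is purely bookkeeping: one must confirm that the piecewise formula for $f_s$ really is continuous at the interface $\Real u = (1-s)/(1+s)$ and that it stays on $\Sph^6$, which amounts to checking that the square root appearing in the definition is non-negative in the first branch and that the two branches agree at the interface. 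Both of these facts are visible from the formula: at $\Real u = (1-s)/(1+s)$ the first branch reduces to $\bsmat 0\\ 1\esmat$ because the bracket under the square root vanishes and the prepended vector $\bsmat 0\\ \Real u + s(1+\Real u)\esmat$ equals $\bsmat 0\\ 1\esmat$.
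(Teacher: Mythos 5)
Your proposal is correct and takes essentially the same route as the paper: introduce the explicit homotopy $f$, observe that $f_1$ is constant on $D^3$ and hence descends to $\hat f_1$, and read the two required homotopies off the commutative square $\pi\circ f_s = \bar f_s\circ\pi$. The paper compresses this into the single commutative diagram with $f_t$ on top, whereas you spell out both composites $\hat f_1\circ\pi\simeq\id_{\Sph^6}$ and $\pi\circ\hat f_1\simeq\id_{\Sph^6/D^3}$; no substantive difference. Two very minor points of hygiene: the notation $\pi^{-1}$ in the final sentence should be read as ``a homotopy inverse of $\pi$'' since $\pi$ is not a bijection, and the justification that $\bar f$ is continuous is more cleanly phrased via the fact that $\pi\times\id_{[0,1]}$ is a quotient map (because $[0,1]$ is locally compact Hausdorff), rather than ``the quotient map is open on saturated sets.''
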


\begin{cor}
\label{homone}
The homotopy
\begin{gather*}
  \Sph^3\wedge \Sph^3 \stackrel{\lambda^{-1}}\longrightarrow \Sph^6/D^3 \stackrel{f_s}
  \longrightarrow \Sph^6/D^3 \stackrel{\lambda}\longrightarrow \Sph^3\wedge\Sph^3
  \stackrel{[\cdot,\cdot]}\longrightarrow \Sph^3
\end{gather*}
is a homotopy between the commutator $\comm$ and the composition $\comm\circ\iota\circ\mu$.
\end{cor}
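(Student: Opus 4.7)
The plan is simply to verify the endpoints of the given one-parameter family at $s=0$ and $s=1$ and to note that continuity in $s$ is already built into the previous constructions. Since $f_s$ is continuous in $(x,s)$ and satisfies $f_s(D^3)\subset D^3$ for every $s$, it induces a continuous family of self-maps of $\Sph^6/D^3$; pre- and post-composing with the homeomorphism $\lambda$ (resp.\ $\lambda^{-1}$) and with the (continuous) commutator produces a continuous homotopy $\Sph^3\wedge\Sph^3\times[0,1]\to\Sph^3$.

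For the endpoint at $s=0$, I would use that $f_0=\id_{\Sph^6}$, so the induced map on $\Sph^6/D^3$ is the identity. The composition $\comm\circ\lambda\circ f_0\circ\lambda^{-1}$ therefore collapses to $\comm\circ\lambda\circ\lambda^{-1}=\comm$, which is the first map in the statement.

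For the endpoint at $s=1$, I would unwind $\mu=\hat f_1\circ\lambda^{-1}$ as follows. Writing $\pi:\Sph^6\to\Sph^6/D^3$ for the projection, the defining factorization $\iota=\lambda\circ\pi$ is immediate from the formula for $\iota$ and the definition of $\lambda$. Combined with the lifting property $\pi\circ\hat f_1=(f_1)_{\rm ind}$ (the map $\Sph^6/D^3\to\Sph^6/D^3$ induced by $f_1$), this gives
\begin{gather*}
  \iota\circ\mu \;=\; \lambda\circ\pi\circ\hat f_1\circ\lambda^{-1}
  \;=\; \lambda\circ (f_1)_{\rm ind}\circ\lambda^{-1},
\end{gather*}
which is exactly the $s=1$ value of the family. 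Hence $\comm\circ\iota\circ\mu$ is the second endpoint, completing the verification.

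There is no real obstacle here: all the substantive work (the continuity of $\iota$, the fact that $\lambda$ is a homeomorphism, the explicit $f_s$ preserving $D^3$, and the existence of the lift $\hat f_1$) has already been carried out in the lemmas preceding the corollary, so what remains is only the two bookkeeping identities above.
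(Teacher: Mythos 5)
Your verification is correct and is exactly the argument the paper intends (the corollary is stated without proof, as an immediate consequence of the preceding lemma). The two bookkeeping facts you isolate — that $\iota=\lambda\circ\pi$ is how $\lambda$ is defined, and that $\pi\circ\hat f_1$ is the self-map of $\Sph^6/D^3$ induced by $f_1$ — are precisely what make the $s=1$ endpoint collapse to $\comm\circ\iota\circ\mu$, and the $s=0$ endpoint is $\comm$ because $f_0=\id$.
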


Now consider the two homotopies $\Sph^6 \times [0,1] \to \Sph^3$,
\begin{gather*}
  (\bmat p\\u\emat,s) \mapsto
  \tfrac{u}{\abs{u}} e^{\pi p} \tfrac{\bar u}{\abs{u}} e^{-(1-s)\pi p+s\pi\qi} \quad\text{ and }\quad
  (\bmat p\\u\emat,s) \mapsto
  \tfrac{u}{\abs{u}} e^{12 \pi p} \tfrac{\bar u}{\abs{u}} e^{-(1-s) 12 \pi p}.
\end{gather*}
The first homotopy was given in \cite{duranrigas}, the other is a simple modification.

\begin{lem}
\label{homtwo}
The first homotopy deforms $\comm\circ\iota$ to the characteristic map
$\partial_{\Syp(2)\to\Sph^7}(\id)$ and hence $\comm^{12}\circ\iota$ to
$\bigl(\partial_{\Syp(2)\to\Sph^7}(\id)\bigr)^{12}$.
The second homotopy deforms the composition of the map
$(a,b) \mapsto [a,b^{12}]$ and the homotopy equivalence~$\iota$ to
$\bigl(\partial_{\Syp(2)\to\Sph^7}(\id)\bigr)^{12}$.
\end{lem}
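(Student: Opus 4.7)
The plan is to verify each homotopy by checking its two endpoints, confirming continuity across the singular locus $u=0$, and then deducing the $12$-th power statement for the commutator by applying a pointwise $12$-th power to the first homotopy.

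For the first homotopy, at $s=0$ I would expand
\[
  \comm\circ\iota\bmat p\\ u\emat
  = \bigl[\tfrac{u}{\abs{u}},-e^{\pi p}\bigr]
  = \tfrac{u}{\abs{u}}(-e^{\pi p})\tfrac{\bar u}{\abs{u}}(-e^{\pi p})^{-1}
  = \tfrac{u}{\abs{u}}\,e^{\pi p}\,\tfrac{\bar u}{\abs{u}}\,e^{-\pi p},
\]
which is exactly the $s=0$ value of the formula. At $s=1$ the last exponential becomes $e^{\pi\qi}=-1$, so the value collapses to $-\tfrac{u}{\abs{u}}\,e^{\pi p}\,\tfrac{\bar u}{\abs{u}}$, matching the formula for $\partial_{\Syp(2)\to\Sph^7}(\id)$ derived in section\,\ref{examples}. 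Continuity across the singular locus $u=0$ (where $\abs{p}=1$) follows from $e^{\pi p}=\cos\pi+p\sin\pi=-1$: the factor $\tfrac{u}{\abs{u}}\,e^{\pi p}\,\tfrac{\bar u}{\abs{u}}$ converges uniformly to $-1$ independently of the direction of $u$, so the expression extends continuously across the collapsed disk $D^3$. Taking the pointwise $12$-th power of the homotopy inside the group $\Sph^3$ then yields the first claim, since $(\comm\circ\iota)^{12}=\comm^{12}\circ\iota$.

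For the second homotopy, the $s=0$ endpoint evaluates to
\[
  \bigl[\tfrac{u}{\abs{u}},(-e^{\pi p})^{12}\bigr]
  = \bigl[\tfrac{u}{\abs{u}},e^{12\pi p}\bigr]
  = \tfrac{u}{\abs{u}}\,e^{12\pi p}\,\tfrac{\bar u}{\abs{u}}\,e^{-12\pi p},
\]
using $(-1)^{12}=1$, matching the formula. For the $s=1$ endpoint I would set $n=u/\abs{u}$, use the conjugation-covariance $n\,e^{\pi p}\,\bar n = e^{\pi\,npn^{-1}}$ (valid since $p\in\Imag\H$), and compute
\[
  \bigl(\partial_{\Syp(2)\to\Sph^7}(\id)\bmat p\\u\emat\bigr)^{12}
  = (-e^{\pi\,npn^{-1}})^{12}
  = e^{12\pi\,npn^{-1}}
  = n\,e^{12\pi p}\,\bar n,
\]
which agrees with the $s=1$ value of the second homotopy. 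Continuity at $u=0$ is even easier here, since $e^{12\pi p}=1$ whenever $\abs{p}=1$, so the whole expression tends uniformly to $1$.

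The main obstacle is the last calculation: one has to correctly handle the interplay of the sign and the conjugation of exponentials so that the pointwise $12$-th power of $-n\,e^{\pi p}\,\bar n$ folds neatly into $n\,e^{12\pi p}\,\bar n$. Everything else is routine, resting on the identities $e^{\pi q}=-1$ for unit imaginary $q$ and $(-1)^{12}=1$ together with the straightforward continuity argument at the collapse locus $u=0$.
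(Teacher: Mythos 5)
Your proof is essentially correct and follows the route the paper implicitly intends (the paper gives no written argument; it cites \cite{duranrigas} for the first homotopy and says the second is a simple modification). The endpoint checks are right: at $s=0$ the first formula gives $\frac{u}{\abs{u}}(-e^{\pi p})\frac{\bar u}{\abs{u}}(-e^{\pi p})^{-1}$, which is exactly $\comm\circ\iota$; at $s=1$ the factor $e^{\pi\qi}=-1$ turns it into $-\frac{u}{\abs{u}}e^{\pi p}\frac{\bar u}{\abs{u}} = \partial_{\Syp(2)\to\Sph^7}(\id)$, and taking pointwise $12$-th powers of the homotopy gives the statement for $\comm^{12}\circ\iota$, using $(\comm\circ\iota)^{12}=\comm^{12}\circ\iota$. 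Your identification of the $s=1$ endpoint of the second homotopy with $\partial^{12}$ is also fine; a slightly shorter route than the conjugation-of-exponentials identity is to write $\partial=-n e^{\pi p}\bar n$ with $n=u/\abs{u}$ and note that $(-n e^{\pi p}\bar n)^{12}=(-1)^{12}n(e^{\pi p})^{12}\bar n=ne^{12\pi p}\bar n$ directly, since $\bar n n=1$ and $-1$ is central.

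One small inaccuracy: for the continuity of the second homotopy near $u=0$ you claim that ``the whole expression tends uniformly to $1$.'' That is not true for general $s$: only the factor $\frac{u}{\abs{u}}e^{12\pi p}\frac{\bar u}{\abs{u}}$ tends to $1$ (because $e^{12\pi p}\to 1$ as $\abs{p}\to 1$, uniformly in the direction of $u$); the full expression tends to $e^{-(1-s)12\pi p}$, which for instance equals $-1$ at $s=\tfrac14$. The continuity argument still goes through, because the offending factor is a continuous function of $(p,s)$ that does not involve $u$ at all, so the limit as $u\to 0$ exists and the homotopy is continuous on $\Sph^6\times[0,1]$. The same remark applies, in slightly milder form, to your phrasing of the continuity argument for the first homotopy: what you should say is that the $u$-dependent factor converges to a direction-independent limit, and the remaining factor is continuous in $(p,s)$.
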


\begin{cor}
The concatenation of the homotopies in Corollary\,\ref{homone} and Lemma\,\ref{homtwo}
deform $\comm^{12}$ and $(a,b) \mapsto [a,b^{12}]$ both to
$\bigl(\partial_{\Syp(2)\to\Sph^7}(\id)\bigr)^{12}\circ\mu$ and thus conclude Construction\,\ref{homeq}.
\end{cor}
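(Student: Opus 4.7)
The plan is to splice together the homotopies produced in Corollary\,\ref{homone} and Lemma\,\ref{homtwo}. Writing $\partial = \partial_{\Syp(2)\to\Sph^{7}}(\id)$ for brevity, the goal is to exhibit homotopies $\comm^{12}\sim\partial^{12}\circ\mu$ and $\bigl((a,b)\mapsto[a,b^{12}]\bigr)\sim\partial^{12}\circ\mu$, and no new ideas should be needed.

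First I would raise the homotopy $H_{s}:\Sph^{3}\wedge\Sph^{3}\to\Sph^{3}$ of Corollary\,\ref{homone} (with $H_{0}=\comm$ and $H_{1}=\comm\circ\iota\circ\mu$) to the pointwise twelfth power in the topological group $\Sph^{3}$. The map $s\mapsto H_{s}^{12}$ is then a continuous homotopy deforming $\comm^{12}$ to $\comm^{12}\circ\iota\circ\mu$. For the second target, I would observe that $(a,b)\mapsto[a,b^{12}]$ is equal to $1$ whenever $a=1$ or $b=1$, hence factors through $\Sph^{3}\wedge\Sph^{3}$. The construction of Corollary\,\ref{homone} uses only this factorization: its homotopy is $g\circ\lambda\circ f_{s}\circ\lambda^{-1}$ for any $g:\Sph^{3}\wedge\Sph^{3}\to\Sph^{3}$. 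Running it with $g$ equal to the factored version of $[\,\cdot\,,(\,\cdot\,)^{12}]$ in place of $g=\comm$ yields a homotopy between $(a,b)\mapsto[a,b^{12}]$ and $\bigl((a,b)\mapsto[a,b^{12}]\bigr)\circ\iota\circ\mu$.

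Second I would precompose the two deformations of Lemma\,\ref{homtwo} with the fixed map $\mu:\Sph^{3}\wedge\Sph^{3}\to\Sph^{6}$. The first Lemma\,\ref{homtwo} homotopy already gives $\comm^{12}\circ\iota\sim\partial^{12}$, so on composition with $\mu$ one obtains $\comm^{12}\circ\iota\circ\mu\sim\partial^{12}\circ\mu$. The second Lemma\,\ref{homtwo} homotopy gives $\bigl((a,b)\mapsto[a,b^{12}]\bigr)\circ\iota\sim\partial^{12}$ and therefore, on composition with $\mu$, deforms $\bigl((a,b)\mapsto[a,b^{12}]\bigr)\circ\iota\circ\mu$ to $\partial^{12}\circ\mu$ as well. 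Concatenating the first-step homotopy with the appropriate second-step homotopy in each case produces the desired deformation to the common target $\partial^{12}\circ\mu$, completing Construction\,\ref{homeq}.

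There is no genuine obstacle: everything substantive has already been done in the previous two statements. The only piece of bookkeeping worth flagging is that Corollary\,\ref{homone}'s argument is intrinsic to $\Sph^{3}\wedge\Sph^{3}$ and applies verbatim to any map $\Sph^{3}\times\Sph^{3}\to\Sph^{3}$ that is trivial on $\Sph^{3}\vee\Sph^{3}$, so that the same machine can be fed both $\comm^{12}$ and $[\,\cdot\,,(\,\cdot\,)^{12}]$ without modification.
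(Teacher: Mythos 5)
Your argument is correct and is essentially the paper's intended proof: the paper states this corollary without proof as an immediate consequence of the preceding two results, and your splicing — raising the Corollary\,\ref{homone} homotopy to its pointwise twelfth power for the $\comm^{12}$ case, re-running the same machine with $[\,\cdot\,,(\,\cdot\,)^{12}]$ (which also vanishes on $\Sph^3\vee\Sph^3$) for the second case, and precomposing the Lemma\,\ref{homtwo} homotopies with $\mu$ — is precisely the missing bookkeeping.
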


\bigskip

\section{The homotopy between $\kappa\circ\power^2$ and $(N,\Sigma^2\tau)$}
\label{kappatwo}
In this section we perform Construction\,\ref{kappadeform} and construct a homotopy
between the map $\kappa\circ\power^2$ (see subsection\,\ref{geomidentity}) and
a map $(N,\Sigma^2\tau)$. Finally, we perform Construction\,\ref{homlift}.

\subsection{The deformation between $\kappa$ and $h$}
\label{kappatoh}
We first produce a formula for $\kappa$ using complex coordinates. For this we use the
complex cross product definition for the octonionic multiplication on $\C \times \C^3$
from section\,\ref{oct} but not precisely as defined there. For technical purposes
we pull it back by the isometry
\begin{gather*}
  \C \times \C^3 \to \C \times \C^3, \quad
  \bsmat z_0\\ z\esmat \mapsto \bsmat \bar z_0\\ iz\esmat.
\end{gather*}
This saves us some additional deformations. We also need to specify the basis
$e_1,\ldots, e_7$ of subsection\,\ref{geomidentity}. Set
\begin{gather*}
  e_1 = \bbsmat i\\ 0\\ 0\\ 0\eesmat, \quad
  e_2 = \bbsmat 0\\ 1\\ 0\\ 0\eesmat, \quad
  e_3 = \bbsmat 0\\ i\\ 0\\ 0\eesmat, \quad\ldots,
  e_7 = \bbsmat 0\\ 0\\ 0\\ i\eesmat.
\end{gather*}

\begin{lem}
\label{kappacmplx}
With this convention we have $\kappa\bsmat z_0\\ z\esmat = \bigl(
 \kappa_1\bsmat z_0\\ z\esmat, \kappa_2\bsmat z_0\\ z\esmat\bigr)$, where
\begin{gather*}
  \kappa_1\bsmat z_0\\ z \esmat  = \BBsmat i(\abs{z_0}^2-\abs{z}^2)\\
  2 \bar z_0 z_1\\ 2 \bar z_0 z_2\\ 2 \bar z_0 z_3 \EEsmat, \qquad
  \kappa_2\bsmat z_0\\ z \esmat = \BBsmat -2i \Real z_0 z_1\\
    \bar z_0^2 + z_1^2 - \abs{z_2}^2 - \abs{z_3}^2\\
    -2i(\Imag z_1)z_2 + 2i(\Real z_0) \bar z_3\\
    -2i(\Imag z_1)z_3 - 2i(\Real z_0)\bar z_2\EEsmat
\end{gather*}
\end{lem}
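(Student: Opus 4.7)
The proof is a direct coordinate computation. My first step is to make the pulled-back multiplication explicit. Since $\phi$ commutes with octonionic conjugation, we still have $\bar a = \bsmat \bar a_0\\ -a\esmat$ in the new convention. Using the elementary identities $\langle iz, iw\rangle = \langle z,w\rangle$ and $(iz)\times(iw) = -(z\times w)$, the relation $x *_{\mathrm{new}} y = \phi^{-1}(\phi(x)\cdot_{\mathrm{old}}\phi(y))$ unrolls into a compact formula for $*_{\mathrm{new}}$ built from the same ingredients (Hermitian product, complex cross product) as the original multiplication, up to predictable signs and factors of $i$.

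With this in hand, I compute $\kappa_1(a) = \bar a\, e_1\, a$ by first multiplying $\bar a$ with $e_1 = \bsmat i\\ 0\esmat$ and then multiplying the result on the right by $a$. The first step is short because $e_1$ has vanishing $\C^3$-part. In the second step, the only cross product that appears is of the form $(\lambda a)\times a$ with $\lambda\in\C$, which vanishes termwise because $(z\times w)_k = \bar z_l\bar w_m - \bar z_m\bar w_l$ gives $\bar\lambda(\bar a_l\bar a_m - \bar a_m\bar a_l) = 0$. What remains collapses to the stated $\kappa_1$; geometrically this is essentially the standard Hopf map $\C^4 \to \R\times\C^3$, which is the simplification that the pullback by $\phi$ is engineered to produce and will be exploited in subsection\,\ref{kappatoh}.

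The computation of $\kappa_2(a) = \bar a\, e_2\, a$ with $e_2 = \bsmat 0\\ 1\\ 0\\ 0\esmat$ follows the same pattern but is considerably longer, since now $e_2$ has a nonzero $\C^3$-part and both multiplication steps produce nonvanishing cross products. After expanding everything and regrouping with the identities $a_k + \bar a_k = 2\Real a_k$ and $a_k - \bar a_k = 2i\Imag a_k$, the four coordinates reassemble into the formula stated for $\kappa_2$. The main obstacle is purely computational---bookkeeping signs, conjugations, and cross-product indices---with no conceptual subtlety beyond writing $*_{\mathrm{new}}$ correctly. A useful sanity check is that $\kappa_1(a)$ and $\kappa_2(a)$ must be orthogonal unit imaginary octonions (since $\kappa$ lands in $V_{7,2}$), which follows from $e_1 \perp e_2$ and the fact that conjugation by a unit octonion is orthogonal on $\Imag\Ca$.
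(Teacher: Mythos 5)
Your overall plan---a direct coordinate computation---is the same spirit as the paper's, whose proof is the single line ``straightforward evaluation of $\bsmat z_0\\ -iz\esmat\cdot e_1\cdot\bsmat\bar z_0\\ iz\esmat$ and $\bsmat z_0\\ -iz\esmat\cdot e_2\cdot\bsmat\bar z_0\\ iz\esmat$ with the product defined in Theorem\,\ref{octonionicmultiplication}.'' But there is a concrete mismatch between the computation you outline and the formula you are trying to reach, and it would bite you in practice.

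You propose to first write out the strict pull-back product $x *_{\mathrm{new}} y = \phi^{-1}(\phi(x)\cdot_{\mathrm{old}}\phi(y))$ and then evaluate $\bar a * e_1 * a$. Carry that through: $\bar a * e_1 = \bsmat i\bar z_0\\ iz\esmat$, and then $(\bar a * e_1) * a = \bsmat i(|z_0|^2-|z|^2)\\ 2i\,\bar z_0 z\esmat$, with an extra factor of $i$ on the $\C^3$ part compared to the stated $\kappa_1$. The source of the discrepancy is that $\bar a *_{\mathrm{new}} e_1 *_{\mathrm{new}} a = \phi^{-1}\bigl(\phi(\bar a)\cdot\phi(e_1)\cdot\phi(a)\bigr)$, whereas the paper's displayed expression and the lemma's formula correspond to $\phi(\bar a)\cdot e_1\cdot\phi(a)$ evaluated with the \emph{old} product---no $\phi(e_1)$ (note $\phi(e_1)=-e_1$) and no final $\phi^{-1}$. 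Concretely, the lemma's formula is $\kappa_{\mathrm{old}}\circ\phi$, not the fully conjugated $\phi^{-1}\circ\kappa_{\mathrm{old}}\circ\phi$ that a literal ``pull back the multiplication'' would yield; the two differ by the isometry $\bsmat w_0\\ w\esmat\mapsto\bsmat w_0\\ iw\esmat$ on $\Imag\Ca$. So your claim that the computation ``collapses to the stated $\kappa_1$'' is not correct for the approach you actually describe. To reproduce the stated formula you should do what the paper's proof literally says: substitute $\bsmat z_0\\ -iz\esmat$, the unmodified $e_k$, and $\bsmat\bar z_0\\ iz\esmat$ directly into the product of Theorem\,\ref{octonionicmultiplication}, without ever writing out a ``new'' product. (Your preliminary identities $\langle iz,iw\rangle=\langle z,w\rangle$ and $(iz)\times(iw)=-(z\times w)$ are correct but are not the ones that actually appear; the substitution approach produces mixed cross products such as $(-iz)\times w$ and $z''\times(iz)$ instead.)

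One further caution for anyone performing this computation: the second entry of $\kappa_2$ in the lemma should read $\bar z_0^2 - z_1^2 - \abs{z_2}^2 - \abs{z_3}^2$, not $\bar z_0^2 + z_1^2 - \cdots$; this is visible from direct evaluation and from the formulas for $\tilde\kappa_2(0)$ and $h_2$ later in section\,\ref{kappatwo}. If you had obtained $-z_1^2$ you should not have taken that as evidence of an error on your part.
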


\begin{proof}
Straightforward evaluation of
$\bsmat z_0\\ -iz \esmat\cdot e_1\cdot \bsmat \bar z_0\\ iz\esmat$
and
$\bsmat z_0\\ -iz \esmat\cdot e_2\cdot \bsmat \bar z_0\\ iz\esmat$
with the product defined in Theorem\,\ref{octonionicmultiplication}.
\end{proof}

\begin{rem}
It is evident from the above formulas that the columns of $\kappa$ are two
perpendicular variants of the composition of the fibration $\Sph^7\to \CP^3$
and the cut locus collapse $\CP^3\to\Sph^6$.
\end{rem}

Now omit the argument $\bsmat z_0\\ z \esmat$ for a better readability and set
\begin{align*}
  \tilde\kappa_1(s) &=
  \bmat i(\abs{z_0}^2-\abs{z_1}^2 -(\abs{z_2}^2+\abs{z_3}^2)\cos^2s)\\
    2 \bar z_0 z_1\\
    2 z_2(\sin^2s + \bar z_0\cos^2s)-2\bar z_3(1-\bar z_0)\cos s\sin s\\
    2 z_3 (\sin^2s + \bar z_0\cos^2s)+2\bar z_2(1-\bar z_0)\cos s\sin s
  \emat,\\
  \tilde\kappa_2(s) &=
  \bmat -2i \Real z_0 z_1\\
    \bar z_0^2 - z_1^2 - (\abs{z_2}^2 + \abs{z_3}^2)\cos^2s\\
    2 z_2 (\Imag z_1) (\cos s\sin s - i \cos^2s)
      - 2 \bar z_3 ((1-\Real z_0) \cos s\sin s - i(\sin^2s+\Real z_0\cos^2s))\\
    2 z_3(\Imag z_1) (\cos s\sin s - i \cos^2s)
    + 2 \bar z_2 ((1-\Real z_0) \cos s\sin s - i(\sin^2s+\Real z_0\cos^2s))
  \emat.
\end{align*}
Keep first $\kappa_2$ fixed while deforming $\kappa_1$,
then keep $\kappa_1$ fixed while deforming $\kappa_2$, i.e., set
\begin{gather*}
  H_{\kappa}(s) =
  \begin{cases}
    \bigl( \tilde\kappa_1(s), \tilde\kappa_2(0)\bigr), & \text{ if $s \in [0,\tfrac{\pi}{2}]$,}\\
    \bigl( \tilde\kappa_1(\tfrac{\pi}{2}),
      \tilde\kappa_2(s-\tfrac{\pi}{2})\bigr), & \text{ if $s \in [\tfrac{\pi}{2},\pi]$.}
  \end{cases}
\end{gather*}
Define two perpendicular variants $h_1$ and $h_2$ of the Hopf map
$\Sph^3\to \Sph^2$ by
\begin{gather*}
  h_1\bsmat z_0\\ z_1 \esmat =
  \bmat i(\abs{z_0}^2-\abs{z_1}^2)\\ 2\bar z_0 z_1\emat
  \text{ and }
  h_2\bsmat z_0\\ z_1 \esmat =
  \bmat -2i \Real z_0 z_1\\ \bar z_0^2 - z_1^2\emat
\end{gather*}
and extend them to all $\bsmat z_0\\ z_1 \esmat \in \C^2$ by
\begin{gather*}
  h_1\bsmat z_0\\ z_1 \esmat =
    \Abs{\bsmat z_0\\ z_1 \esmat} \, h_1(\bsmat z_0\\ z_1 \esmat/\Abs{\bsmat z_0\\ z_1 \esmat})
  \text{ and }
  h_2\bsmat z_0\\ z_1 \esmat =
    \Abs{\bsmat z_0\\ z_1 \esmat} \, h_2(\bsmat z_0\\ z_1 \esmat/\Abs{\bsmat z_0\\ z_1 \esmat}).
\end{gather*}

\begin{lem}
\label{maindeform}
The homotopy $H_\kappa$ induces a homotopy $\Sph^7\times [0,\pi+1] \to V_{7,2}$
between the map $\kappa$ and the map $h$ given by
\begin{gather*}
  h\bsmat z_0\\ z \esmat =
    \begin{pmatrix}
      h_1\bsmat z_0\\ z_1 \esmat & h_2\bsmat z_0\\ z_1 \esmat\\
      z_2 & i\bar z_3\\ z_3 & -i\bar z_2
    \end{pmatrix}
\end{gather*}
\end{lem}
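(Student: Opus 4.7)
The homotopy $H_\kappa$ as written is a family of pairs of vectors in $\Imag\Ca$, and the claimed homotopy into $V_{7,2}$ should be obtained by normalizing these pairs (using Gram--Schmidt) and appending a short interpolation on $[\pi,\pi+1]$ that carries the normalized endpoint to $h$. The plan breaks into four stages.

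\emph{Stage 1 (endpoints and continuity at $s=\pi/2$).} Substituting $\cos^2 0=1$, $\sin^2 0=0$ into $\tilde\kappa_1(s)$ and $\tilde\kappa_2(s)$ reproduces the formulas for $\kappa_1$ and $\kappa_2$ of Lemma \ref{kappacmplx} on the nose, so the homotopy starts at $\kappa$. I would also check that the two piecewise rules defining $H_\kappa$ agree at $s=\pi/2$: both prescriptions yield $\bigl(\tilde\kappa_1(\pi/2),\tilde\kappa_2(0)\bigr)$ there.

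\emph{Stage 2 (orthogonality and non-vanishing).} The critical calculation is to verify
\begin{gather*}
  \langle\tilde\kappa_1(s),\tilde\kappa_2(0)\rangle=0 \quad\text{and}\quad \langle\tilde\kappa_1(\pi/2),\tilde\kappa_2(s)\rangle=0
\end{gather*}
for every $s\in[0,\pi/2]$. I would expand the real inner product on $\Imag\Ca$, collect terms by the trigonometric monomials $\cos^2 s$, $\sin^2 s$, $\cos s\sin s$, and reduce using $|z_0|^2+|z|^2=1$ together with the known identity $\langle\kappa_1,\kappa_2\rangle=0$ (which handles the $s=0$ specialization). In parallel I would write $|\tilde\kappa_j(s)|^2$ in closed form to rule out vanishing; for instance, a direct computation gives $|\tilde\kappa_1(\pi/2)|=2-|z_0|^2-|z_1|^2>0$ on $\Sph^7$, and the other norms admit analogous expressions.

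\emph{Stage 3 (normalization and final interpolation).} Once orthogonality and non-vanishing are in hand, $\bigl(\tilde\kappa_1(s)/|\tilde\kappa_1(s)|,\tilde\kappa_2(s)/|\tilde\kappa_2(s)|\bigr)$ is a continuous map $\Sph^7\times[0,\pi]\to V_{7,2}$ beginning at $\kappa$. On $[\pi,\pi+1]$ I would connect its terminal value to $h$ by the straight-line homotopy of unnormalized columns followed by Gram--Schmidt: since $\tilde\kappa_j(\pi/2)$ and the $j$-th column of $h$ have matching ``Hopf block'' direction and differ only in the relative scaling of their $(z_2,z_3)$-components versus their $(z_0,z_1)$-components, the straight line avoids the zero vector and stays in a 2-plane on which normalization yields a well-defined path in $V_{7,2}$.

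The principal obstacle is the orthogonality check in Stage 2. The formulas for $\tilde\kappa_1$ and $\tilde\kappa_2$ are dense with trigonometric and conjugate factors, and the verification is long though mechanical. What makes the statement believable before the calculation is the design of the $s$-dependence: the $\cos^2 s$--weighted terms in $\tilde\kappa_1$ redistribute the weight of $|z_2|^2+|z_3|^2$, and the analogous terms in $\tilde\kappa_2$ transfer the $\Real z_0$ and $\Imag z_1$ contributions, in a way that preserves the underlying Hopf-block orthogonality present in both $\kappa$ and $h$.
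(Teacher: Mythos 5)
The orthogonality you assert in Stage~2 does not hold, and this breaks the plan: $H_\kappa$ only keeps the two columns \emph{linearly independent}, not perpendicular, so normalizing each column separately does not land in $V_{7,2}$. A concrete counterexample: take $z_0 = z_1 = \tfrac12$, $z_2 = \tfrac{1}{\sqrt 2}$, $z_3 = 0$, and $s = \tfrac{\pi}{2}$. Then
\begin{gather*}
\tilde\kappa_1(\tfrac{\pi}{2}) = \bigl(0,\ \tfrac12,\ \sqrt{2},\ 0\bigr),
\qquad
\tilde\kappa_2(0) = \bigl(-\tfrac{i}{2},\ -\tfrac12,\ 0,\ -\tfrac{i}{\sqrt 2}\bigr),
\end{gather*}
and the real inner product on $\Imag\Ca \approx i\R \times \C^3$ gives
$\langle\tilde\kappa_1(\tfrac{\pi}{2}),\tilde\kappa_2(0)\rangle = -\tfrac14 \neq 0$. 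In general one finds
$\langle\tilde\kappa_1(\tfrac{\pi}{2}),\tilde\kappa_2(0)\rangle = -2\,\Real(z_0\bar z_1)\,(\abs{z_2}^2+\abs{z_3}^2)$,
so the two columns are genuinely non-orthogonal on a large set. Consequently the map $\bigl(\tilde\kappa_1(s)/\abs{\tilde\kappa_1(s)},\, \tilde\kappa_2(s)/\abs{\tilde\kappa_2(s)}\bigr)$ you propose on $[0,\pi]$ does not take values in $V_{7,2}$, and the step ``once orthogonality\ldots is in hand'' has no valid foundation.

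What the paper does instead is establish the weaker and correct fact of \emph{linear independence}, and then Gram--Schmidt-orthonormalize the two columns for \emph{all} $s\in[0,\pi+1]$, not only on the final segment. The independence argument is also structured differently from the ``expand and collect trigonometric monomials'' calculation you envision: one first notes $\tilde\kappa_2(s) = f_2\circ\tilde\kappa_1(s)\circ f_1$ for explicit isometries $f_1,f_2$, giving $\abs{\tilde\kappa_1(s)} = \abs{\tilde\kappa_2(s)} = 1 + (\abs{z_2}^2+\abs{z_3}^2)\sin^2 s$ by a single short computation; one then supposes for contradiction that the columns are dependent, decomposes their last two complex components along the Hermitian-orthogonal directions $\bsmat z_2\\ z_3\esmat$ and $\bsmat -\bar z_3\\ \bar z_2\esmat$, and obtains two scalar equations whose real and imaginary parts have no common solution. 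Your Stage~1 (endpoint and continuity checks) and your final-segment interpolation are fine as far as they go, but you should replace the Stage~2 orthogonality check by a linear-independence argument of this kind and apply Gram--Schmidt over the entire parameter interval.

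Separately, your explicit norm $\abs{\tilde\kappa_1(\tfrac{\pi}{2})} = 2 - \abs{z_0}^2 - \abs{z_1}^2$ is correct and agrees with the paper's expression $1 + (\abs{z_2}^2+\abs{z_3}^2)$.
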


\begin{proof}
First note that $\tilde\kappa_2(s) = f_2\circ \tilde\kappa_1(s)\circ f_1$, where
$f_1$ and $f_2$ are the two isometries
\begin{gather*}
  f_1\bsmat z_0\\ z \esmat = \BBsmat \Real z_0 + i \Imag z_1\\
  \Real z_1 - i\Imag z_0\\ \Real z_2 + i \Real z_3\\ \x-\Imag z_2 + i\Imag z_3\EEsmat
  \text{ and }
  f_2\bsmat z_0\\ z \esmat = \BBsmat \Real z_0 + i \Real z_1\\
  \Imag z_0 + i\Imag z_1\\ \Imag z_3 + i \Imag z_2\\ \Real z_3 - i\Real z_2\EEsmat.
\end{gather*}
Hence, $\abs{\tilde\kappa_2(s)} = \abs{\tilde\kappa_1(s)}$ and a straightforward
computation shows
\begin{gather*}
  \abs{\tilde\kappa_2(s)} = \abs{\tilde\kappa_1(s)}
    = 1 + (\abs{z_2}^2+\abs{z_3}^2) \sin^2 s.
\end{gather*}
In this and the following computations one can use the fact that the two vectors
$\bsmat z_2\\ z_3 \esmat$ and $\bsmat -\bar z_3\\ \bar z_2 \esmat$ that arise in the
last two components of $\tilde\kappa_1(s)$ and $\tilde\kappa_1(s)$ are
perpendicular with respect to the standard hermitian product on $\C^2$.
Now we verify first that $\tilde\kappa_1(s)$ and $\tilde\kappa_2(0)$
are linearly independent. If $z_2 = z_3 = 0$ this is clear because $h_1$ and $h_2$
are perpendicular. Hence, suppose that $z_2\neq 0$ or $z_3\neq 0$.
The equation
\begin{gather*}
  \tilde\kappa_1(s)
    = \pm \bigl(1 + (\abs{z_2}^2+\abs{z_3}^2) \sin^2 s\bigr)\,\tilde\kappa_2(0)
\end{gather*}
then yields the two equations
\begin{align*}
  \sin^2 s + \bar z_0 \cos^2 s
    &= \mp i \Imag z_1 \bigl(1 + (\abs{z_2}^2+\abs{z_3}^2) \sin^2 s\bigr),\\
  (1-\bar z_0) \cos s \sin s
    &= \mp i \Real z_0 \bigl(1 + (\abs{z_2}^2+\abs{z_3}^2) \sin^2 s\bigr).
\end{align*}
Sorting by real and by imaginary parts one can easily see that there are no solutions.
Next we verify that $\tilde\kappa_1(\tfrac{\pi}{2})$ and $\tilde\kappa_2(s-\tfrac{\pi}{2})$
are linearly independent. As above this is clear if $z_2 = z_3 = 0$.
If $z_2\neq 0$ or $z_3\neq 0$ then the equation
\begin{gather*}
  \bigl(1 + (\abs{z_2}^2+\abs{z_3}^2)\sin^2 s\bigr)\, \tilde\kappa_1(\tfrac{\pi}{2})
    = \pm (1 + \abs{z_2}^2+\abs{z_3}^2)\, \kappa_2(s-\tfrac{\pi}{2})
\end{gather*}
yields the two unsolvable equations
\begin{align*}
  \bigl(1 + (\abs{z_2}^2+\abs{z_3}^2)\sin^2 s\bigr)
     &= \pm (1 + \abs{z_2}^2+\abs{z_3}^2)(\Imag z_1)(\cos s\sin s - i\cos^2 s),\\
  0 &= \pm  ((1-\Real z_0) \cos s\sin s - i(\sin^2s+\Real z_0\cos^2s)).
\end{align*}
Hence, the homotopy $H_{\kappa}$ deforms the map $\kappa$ to the map
\begin{gather*}
  H_{\kappa}(\pi) :\Sph^7 \to V_{7,2}, \quad
  \bmat z_0\\ z \emat \mapsto 
    \bmat
      i(\abs{z_0}^2-\abs{z_1}^2) & -2i \Real z_0 z_1\\
      2\bar z_0 z_1 &  \bar z_0^2 - z_1^2\\
      2 z_2 & 2 i\bar z_3\\ 2z_3 & -2 i\bar z_2
    \emat
\end{gather*}
and the two columns of $\tilde\kappa$ are linearly independent for all $s \in [0,\pi]$.
Now concatenate the homotopy $H_{\kappa}$ with the deformation
\begin{gather*}
  \tfrac{1}{1-s + s\sqrt{\abs{z_0}^2+\abs{z_1}^2}}
    \bmat
      i(\abs{z_0}^2-\abs{z_1}^2) & -2i \Real z_0 z_1\\
      2\bar z_0 z_1 &  \bar z_0^2 - z_1^2\\ 0 & 0\\ 0 & 0
    \emat
    + (2-s) 
    \bmat
      0 & 0\\ 0 & 0\\
      z_2 & i\bar z_3\\ z_3 & - i\bar z_2
    \emat
\end{gather*}
for $s \in [0,1]$ and orthonormalize the two columns for all $s \in [0,\pi+1]$.
\end{proof}

\smallskip

\subsection{The deformation between $h\circ\power^2$ and $(N,\Sigma^2\tau)$}
Recall first that the map $h$ consists of two perpendicular variants of the
fourth suspension $\Sigma^4 h_1$ of the Hopf fibration $h_1:\Sph^3\to \Sph^2$.
Lemma\,\ref{spherenull} provides a concrete null-homotopy of $\Sigma^4 h_1\circ\power^2$.
This null-homotopy can be lifted from $\Sph^6$ to $V_{7,2}$ and we
obtain a homotopy between $h\circ\power^2$ and a map $(N,\sigma)$ for some map
$\sigma:\Sph^7\to \Sph^5$. When we lift the deformation curves horizontally,
however, this map $\sigma$ is not the suspension of a map and we have
no idea how to deform $\sigma\circ\power^2$ to a constant map in a subsequent step.
In order to obtain a suspension we deform the squaring map of $\Sph^7$
to the double suspension of the squaring map of $\Sph^5$
and transfer the problem from $V_{7,2}$ to $V_{5,2}$.

\smallskip
 
For technical purposes we change our coordinates again.
Set $z_2 = a + id$ and $z_3 = c+ib$.
More formally, define an isometry $f_3: \R^4 \to \C^2$ by
\begin{gather*}
  f_3: \bmat a\\ b \\ c\\ d\emat \mapsto \bmat a + i d\\ c + ib\emat
\end{gather*}
and extend it naturally to an isometry $\C^2 \times \R^4 \to \C^2 \times \C^2$.
The map $\tilde h = f_3^{-1}\circ h\circ f_3$ is now given by
\begin{gather*}
  \tilde h : \bmat z_0\\ z_1\\ a\\ b \\ c\\ d\emat \mapsto 
  \begin{pmatrix} h_1\bsmat z_0\\ z_1\esmat & h_2\bsmat z_0\\ z_1\esmat\\
    a & \m b\\ b & -a\\ c & -d \\ d & \m c\end{pmatrix}.
\end{gather*}
Note that $f_3\circ \power^2 = \power^2\circ f_3$ since $f$ does not affect the first four real
coordinates. Hence, $h\circ\power^2 = f_3\circ \tilde h\circ \power^2\circ f_3^{-1}$.

\smallskip

Thus it remains to construct a homotopy between $\tilde h\circ \power^2$
and $(N,\Sigma^2\tilde\tau)$ where $N$ is the constant map to the north pole of $\Sph^7$
and $\tilde\tau$ is a map $\Sph^5 \to \Sph^3$. The map $\tau$ above is then given by
$\tau = f_3\circ \tilde \tau \circ f_3^{-1}$.

\smallskip

We now apply a construction that deforms the squaring map such that the last two
coordinates remain unaltered.

\smallskip

We consider the squaring map $\power^2$ of the general sphere
$\Sph^n \subset \R\times\R^n$. In Cartesian coordinates this map is given by
\begin{gather*}
  \power^2: \bmat x_0\\ \smash[b]{x_1}\\ \vdots \\ x_n\emat \mapsto
  \bmat x_0^2 - x_1^2 -\ldots - x_n^2 \\ 2 \smash[b]{x_0 x_1} \\
  \vdots \\ 2 x_0 x_n \emat.
\end{gather*}
Given a map $\rho : \Sph^{n} \to \Sph^{m}$,
view $\Sph^{n+2}\subset (\R\times \R^n) \times \R^2$ and
$\Sph^{m+2} \subset (\R\times \R^m) \times \R^2$ as double lower suspensions of
$\Sph^{n}$ and $\Sph^{m}$ and let
\begin{gather*}
  \Sigma_2\rho : \Sph^{n+2}\to \Sph^{m+2}, \quad
  \bsmat v\\ w\esmat \mapsto \bsmat \abs{v} \rho(v/\abs{v})\\ w\esmat
\end{gather*}
denote the double lower suspension of $\rho$.
For $x \in \Sph^{n+2}$ and $s \in [0,\frac{\pi}{2}]$ set
\begin{gather*}
  H_{\power^2}(x,s) =
  \bmat
     x_0^2 - x_1^2 - \ldots - x_n^2 -(x_{n+1}^2+x_{n+2}^2)\cos^2s)\\
    2 \bar x_0 x_1\\
    \vdots\\
    2 \bar x_0 x_n\\
    2 x_{n+1}(\sin^2s + x_0\cos^2s) - 2 x_{n+2} (1 - x_0)\cos s\sin s\\
    2 x_{n+2} (\sin^2s + x_0\cos^2s) + 2 x_{n+1}(1 - x_0)\cos s\sin s
  \emat.
\end{gather*}

\begin{lem}
\label{squaringdeform}
The homotopy $H_{\power^2}$ induces a homotopy
$\Sph^{n+2}\times [0,\frac{\pi}{2}] \to \Sph^{n+2}$ between the squaring map
$\power^2$ of $\Sph^{n+2}$ and the double lower suspension
$\Sigma_2\power^2$ of the squaring map of $\Sph^n$.
\end{lem}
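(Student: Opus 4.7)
The plan is to show that (i) $H_{\power^2}(\,\cdot\,,0)$ is literally $\power^2$, (ii) $H_{\power^2}$ is nonzero everywhere so that it descends, after normalization, to a continuous map into $\Sph^{n+2}$, and (iii) the $s=\pi/2$ endpoint of the normalized map differs from $\Sigma_2\power^2$ only by a trivial block-rescaling, which can then be concatenated as a final stage of the homotopy. This mirrors the structure (and even the key algebraic identity) already used in the proof of Lemma~\ref{maindeform} for $\tilde\kappa_1$ and $\tilde\kappa_2$.

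For step (i), substituting $s=0$ collapses $\sin^2 s$- and $\cos s\sin s$-terms to zero and turns $\cos^2 s$ into $1$, after which each component reduces directly to the corresponding Cartesian component of $\power^2(x)$; in particular $|H_{\power^2}(x,0)|=1$. For step (ii), write $r^2 = x_{n+1}^2+x_{n+2}^2$, and package the last two components of $H_{\power^2}$ as $2(x_{n+1}\alpha-x_{n+2}\beta,\, x_{n+2}\alpha+x_{n+1}\beta)$ with $\alpha=\sin^2 s+x_0\cos^2 s$ and $\beta=(1-x_0)\cos s\sin s$. Their squared norm is $4r^2(\alpha^2+\beta^2)$, and the crucial algebraic identity
\begin{gather*}
  \alpha^2+\beta^2 = \sin^2 s + x_0^2\cos^2 s
\end{gather*}
(the same simplification used in Lemma~\ref{maindeform}) allows one to combine it with the first coordinate and the middle coordinates $(2x_0 x_i)$ to obtain, after using $x_0^2+\dots+x_{n+2}^2=1$, the perfect square
\begin{gather*}
  \bigl|H_{\power^2}(x,s)\bigr|^2 = \bigl(1+r^2\sin^2 s\bigr)^2.
\end{gather*}
Hence $\hat H := H_{\power^2}/|H_{\power^2}|$ is a well-defined continuous homotopy $\Sph^{n+2}\times[0,\tfrac{\pi}{2}]\to \Sph^{n+2}$.

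For step (iii), at $s=\pi/2$ the first $n+1$ coordinates of $H_{\power^2}(x,\pi/2)$ are $(2x_0^2-1+r^2,\,2x_0 x_1,\ldots,2x_0 x_n)$ and the last two are $(2x_{n+1},2x_{n+2})$, while $|H_{\power^2}|=1+r^2$. Setting $v=(x_0,\ldots,x_n)$ and $w=(x_{n+1},x_{n+2})$, a short computation shows that the first block is a positive scalar multiple of $|v|\,\power^2(v/|v|)$ and the last block is a positive scalar multiple of $w$, so $\hat H(x,\pi/2)$ and $\Sigma_2\power^2(x)=(|v|\,\power^2(v/|v|),w)$ lie on the same great circle inside the two-plane of $\R^{n+3}$ spanned by the first-block direction and the $w$-direction. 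A straightforward rescaling homotopy along this great circle — adjusting the norms of the two blocks from $\tfrac{1-r^2}{1+r^2}:\tfrac{2r}{1+r^2}$ to the correct $|v|:r=\sqrt{1-r^2}:r$ — deforms $\hat H(\,\cdot\,,\pi/2)$ to $\Sigma_2\power^2$. On the loci $v=0$ and $w=0$ the two endpoints already agree, so there is no continuity obstruction at the degenerate locus, and concatenating this rescaling with $\hat H$ produces the required homotopy.

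The only non-routine step is the norm identity in (ii); once the reduction $\alpha^2+\beta^2=\sin^2 s+x_0^2\cos^2 s$ is in hand (it is essentially the same computation as in Lemma~\ref{maindeform}), the remaining verifications are direct substitutions.
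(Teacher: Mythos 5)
Your proof is correct and follows essentially the same route as the paper's, which simply defers to the norm computation carried out in the proof of Lemma~\ref{maindeform} (the identity $\alpha^2+\beta^2=\sin^2 s+x_0^2\cos^2 s$ and the resulting $\abs{H_{\power^2}(x,s)}^2=(1+r^2\sin^2 s)^2$ are the direct analogues of $\abs{\tilde\kappa_i(s)}^2=(1+(\abs{z_2}^2+\abs{z_3}^2)\sin^2 s)^2$ there). You are also right to observe that normalizing $H_{\power^2}$ alone lands not on $\Sigma_2\power^2$ but on a block-rescaled version of it, so a further rescaling homotopy must be concatenated; this matches the final ``concatenate with the deformation\,\ldots\ and orthonormalize'' step in the proof of Lemma~\ref{maindeform}, a detail the lemma statement itself glosses over.
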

\begin{proof}
Contained in the proof of Lemma\,\ref{maindeform}.
\end{proof}

\smallskip

Thus it remains to construct a homotopy between $\tilde h \circ \Sigma_2\power^2$
and $(N,\Sigma^2\tilde\tau)$.

\smallskip

We now apply the following construction: Let
\begin{gather*}
  (\alpha,\beta): \Sph^n \times [0,1] \to V_{m,2}\subset \Sph^{m-1}\times\Sph^{m-1}
\end{gather*}
be a homotopy between the two maps $(\alpha_0,\beta_0)$ and $(\alpha_1,\beta_1)$.
Extend $\alpha$ and $\beta$ to maps $\R^{n+1}\times [0,1] \to \R^{m}$ by setting
$\alpha(v,s) = \abs{v}\alpha(\frac{v}{\abs{v}},s)$ and
$\beta(v,s) = \abs{v}\beta(\frac{v}{\abs{v}},s)$. 
Now define the two homotopies
$\tilde \alpha, \tilde\beta: \Sph^{n+2} \times [0,1] \to \Sph^{m+1}\subset \R^{m+2}$
by setting
\begin{gather*}
  \tilde\alpha(\Bsmat v\\ c\\ d\Esmat, s) = \Bsmat \alpha(v,s)\\ c \\ d\Esmat
  \quad\text{ and }\quad
  \tilde\beta(\Bsmat v\\ c\\ d\Esmat, s) = \Bsmat \beta(v,s)\\ -d \\ c\Esmat.
\end{gather*}
It is evident that $\tilde\alpha$ and $\tilde\beta$ are always perpendicular.
The following statement is now obvious.

\begin{lem}
\label{stiefel}
The assignment $(\alpha,\beta) \mapsto (\tilde\alpha,\tilde\beta)$ induces a
homomorphism $\pi_n(V_{m,2}) \to \pi_{n+2}(V_{m+2,2})$.
\end{lem}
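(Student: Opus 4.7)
Plan. The first step is to check that $(\tilde\alpha,\tilde\beta)$ is a continuous map $\Sph^{n+2} \to V_{m+2,2}$. Since $\alpha$ and $\beta$ have been extended radially to $\R^{n+1}$, we have $\abs{\alpha(v,s)} = \abs{\beta(v,s)} = \abs{v}$, which immediately gives $\abs{\tilde\alpha}^2 = \abs{\tilde\beta}^2 = \abs{v}^2 + c^2 + d^2 = 1$. Orthogonality reduces to the single line $\langle \alpha(v,s),\beta(v,s)\rangle + c\cdot(-d) + d\cdot c = \abs{v}^2\langle\alpha,\beta\rangle + 0 = 0$. Continuity at $v=0$ is automatic since $\alpha(v,s)$ and $\beta(v,s)$ vanish there and the $(c,d)$-part extends continuously.

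The second step is well-definedness on homotopy classes, which is essentially built into the statement: a homotopy $(\alpha_s,\beta_s)$ between two maps $\Sph^n \to V_{m,2}$ produces, via the same formulas, a homotopy $(\tilde\alpha_s,\tilde\beta_s)$ between the corresponding maps $\Sph^{n+2} \to V_{m+2,2}$. Hence the assignment descends to a well-defined set map $\pi_n(V_{m,2}) \to \pi_{n+2}(V_{m+2,2})$.

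For the homomorphism property, I would realize both group operations by pinching along compatible equators. Concretely, fix the first coordinate $v_0$ of $\R^{n+1}$: pinch $\Sph^n \subset \R^{n+1}$ along $\{v_0 = 0\}$ and correspondingly pinch $\Sph^{n+2}\subset \R^{n+1}\times\R^2$ along the same slice $\{v_0 = 0\}$. The construction uses the argument $(v,c,d)$ only through $v$ via the radially extended $\alpha,\beta$, while the $\R^2$-coordinates $(c,d)$ are merely carried along, up to the fixed $90^\circ$ rotation $(c,d)\mapsto(-d,c)$ applied in $\tilde\beta$. Therefore the construction commutes with this pinch on the nose, yielding $\widetilde{(\alpha_1,\beta_1)\cdot(\alpha_2,\beta_2)} = (\tilde\alpha_1,\tilde\beta_1)\cdot(\tilde\alpha_2,\tilde\beta_2)$, which is precisely the homomorphism property.

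The main obstacle is the bookkeeping in this last step: anchoring both pinches at the same first coordinate and checking that the radial extension of $\alpha,\beta$ from $\Sph^n$ to $\R^{n+1}$ is compatible with the pinching reparametrization of $v$. This commutation is essentially formal precisely because the $\R^2$-factor is inert. Conceptually, $(\alpha,\beta)\mapsto(\tilde\alpha,\tilde\beta)$ is just the lift to Stiefel manifolds of the double lower suspension $\Sigma_2$ used in Lemma~\ref{squaringdeform}, and the homomorphism property for double suspensions is standard; the only extra content here is the orthogonality condition on the pair, which is already handled by the cancellation $c(-d)+dc = 0$ in step one.
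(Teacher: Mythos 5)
The paper offers no proof beyond the remark that the lemma ``is now obvious,'' so there is no official argument to compare against; you are supplying more detail than the source. Your first two steps (verifying that $(\tilde\alpha,\tilde\beta)$ lands in $V_{m+2,2}$, including the cancellation $c(-d)+dc=0$, and that homotopies pass through the construction) are correct, and your closing remark that the assignment is a lift of the double lower suspension $\Sigma_2$ identifies the right conceptual reason for the homomorphism property.

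However, the central claim in your third step — that the construction ``commutes with the pinch on the nose'' — is not correct, and the gap is caused by precisely the radial extension you invoke. Suppose $(\alpha,\beta)$ is a concatenation pinched along an equator $E\subset\Sph^n$ through the basepoint $N$, so that $\alpha\equiv\alpha_\ast$ and $\beta\equiv\beta_\ast$ on $E$. On the corresponding equator of $\Sph^{n+2}$, namely $\{(v,c,d):\ v/\abs{v}\in E\}$, one has
\begin{gather*}
  \tilde\alpha(v,c,d)=\bigl(\sqrt{1-c^2-d^2}\,\alpha_\ast,\ c,\ d\bigr),
\end{gather*}
which is \emph{not} constant: it sweeps out a $2$-disk as $(c,d)$ varies. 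So $(\tilde\alpha,\tilde\beta)$ does not factor through the $\Sph^{n+2}$-pinch on the nose; the identity $\widetilde{(\alpha_1,\beta_1)\cdot(\alpha_2,\beta_2)}=(\tilde\alpha_1,\tilde\beta_1)\cdot(\tilde\alpha_2,\tilde\beta_2)$ holds only up to a (standard but nontrivial) homotopy. The clean fix is the one your final sentence hints at: observe that the construction factors as the double suspension $\pi_n(V_{m,2})\to\pi_{n+2}(\Sigma^2 V_{m,2})$ followed by $g_\ast$, where $g:\Sigma^2 V_{m,2}\to V_{m+2,2}$ is the continuous basepoint-preserving map $\bigl[(\omega_1,\omega_2),(r,c,d)\bigr]\mapsto\bigl((r'\omega_1,c,d),(r'\omega_2,-d,c)\bigr)$ with $r'=\sqrt{1-c^2-d^2}$; both factors are group homomorphisms, so the composite is. Alternatively, one can keep your pinch picture but note that the $2$-disk above is collapsed to a point by $\tilde\alpha_i\vee\tilde\beta_i$ only after the canonical homotopy equivalence $\Sph^{n+2}\cup_{D^2}\Sph^{n+2}\simeq\Sph^{n+2}\vee\Sph^{n+2}$, so that the required equality of homotopy classes still follows — just not on the nose.
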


\smallskip

Thus it remains to construct a homotopy between $\tilde h_{5,2}\circ\power^2$ and the map
$(N,\tilde\tau)$, where $\tilde h_{5,2} : \Sph^5 \to V_{5,2}$ is the map
\begin{gather*}
  \tilde h_{5,2} : \bmat z_0\\ z_1\\ a\\ b \emat \mapsto
  \begin{pmatrix} h_1\bsmat z_0\\ z_1\esmat & h_2\bsmat z_0\\ z_1\esmat\\
    a & \m b\\ b & -a\end{pmatrix}.
\end{gather*}
This is because the concrete homomorphism of Lemma\,\ref{stiefel} yields a deformation
\begin{gather*}
  \tilde h \circ \Sigma_2\power^2\bmat z_0\\ z_1 \\ a \\ b\\ c \\ d & \emat
  \sim
  \begin{pmatrix} i\Abs{\bmat z_0\\ z_1\emat} & \m 0 \\
    0 & \tilde \tau \bmat z_0\\ z_1 \\ a \\ b \emat \\ c & -d \\ d & \m c
  \end{pmatrix}
\end{gather*}
and the latter map can be deformed by
\begin{gather*}
  \begin{pmatrix} i\cos (s\arccos(\Abs{\bmat z_0\\ z_1\emat})) & \m 0 \\
    0 & \m \tilde \tau \\
    \frac{c}{\sqrt{a^2+b^2+c^2+d^2}}\sin(s\arccos(\Abs{\bmat z_0\\ z_1\emat})) & -d \\
    \frac{d}{\sqrt{a^2+b^2+c^2+d^2}}\sin(s\arccos(\Abs{\bmat z_0\\ z_1\emat})) & \m c
  \end{pmatrix}
  \quad\text{ to }\quad
  \begin{pmatrix} i & \m 0 \\
    0 & \m \tilde \tau \\ 0 & -d \\ 0 & \m c
  \end{pmatrix}.
\end{gather*}
and, finally, by applying some evident rotations in the domain of definition and
in the target domain, to the map $(N,\Sigma^2\tilde\tau)$.

\smallskip

In the rest of this subsection we construct the remaining homotopy between
$\tilde h_{5,2}\circ\power^2$ and $(N,\tilde\tau)$. It is easy to deform the identity
on $\C^2\times \R^2$ to the isometry
\begin{gather*}
  f_5:\bmat z_0\\ z_1\\ a\\ b \emat \mapsto \bmat a+ ib\\ z_1\\ \Real z_0\\ \Imag z_0 \emat
\end{gather*}
and the identity on $i\R\times\C\times \R^2$ to the isometry
\begin{gather*}
  f_6:\bmat iy\\ w\\ a\\ b \emat \mapsto \bmat ia\\ b + iy\\  \Real w\\ \Imag w \emat.
\end{gather*}
The map $\tilde h_{5,2}$ is thus homotopic to the map
$\hat h_{5,2} = f_6\circ \tilde h_{5,2}\circ f_5$
and, accordingly, $\tilde h_{5,2}\circ \power^2$ is homotopic to $\hat h_{5,2}\circ\power^2$.
The first column of the map $\hat h_{5,2}$ is now nothing but the second suspension
$\Sigma^2 h_1$ of $h_1$. By Lemma\,\ref{spherecommute} we have
\begin{gather*}
  \Sigma^2 h_1\circ \power^2 = \power^2 \circ \Sigma^2 h_1
\end{gather*}
and an explicit null-homotopy of $\power^2 \circ \Sigma^2 h_1$
is supplied in Lemma\,\ref{spherenull}.
Starting with our map $\hat h_{5,2}\circ\power^2$ this null-homotopy
$\Sph^5 \times [0,1] \to \Sph^4$ can be lifted horizontally to the
Stiefel manifold $V_{5,2}$. This yields a homotopy
\begin{gather*}
   \hat h_{5,2} \circ \power^2\sim \begin{pmatrix} i & 0 \\ 0 & \tilde\tau
    \end{pmatrix}
\end{gather*}
with a map $\tilde\tau: \Sph^5 \to \Sph^3$. It is not difficult to solve the ODE
$\delta' (s) = -\langle \delta(s),\gamma'(s)\rangle\gamma(s)$ for the horizontal lift
$(\gamma(s),\delta(s))$ of the curves $\gamma(s) = H_1(\ldots,s)$  and
$\gamma(s) = H_2(\ldots,s)$ of Lemma\,\ref{spherenull} explicitly
and thus to write down a closed formula for the map $\tilde\tau$.
We omit to present this lengthy formula since it is completely irrelevant for the following steps.

\smallskip

\subsection{Performance of Construction\,\ref{homlift}}
Now let $\alpha_s$ denote the deformation between the two maps
$\alpha_0 = \kappa\circ\power^2$ and $\alpha_1 = (N,\Sigma^2\tau)$
constructed in the previous two subsections. Note that the image of the
south pole $\bmat -1\\ 0\emat$ under $\alpha_s$ varies with $s$.
Thus, the deformation $\partial_{\Gtwo\to V_{7,2}}(\alpha_s)$
does not take place in just one $\Sph^3$-fiber.
There are several elementary ways to fix this issue.
We choose one that fits perfectly to the subsequent homotopie $H_{\SU(3)\to\Sph^5}$
given in Theorem\,\ref{suhom}. Let $v_0 = \bmat i\\ 0\emat \in \Sph^7\subset \C\times\C^3$
and $\gamma_{v_0}(t) = \bmat e^{it}\\ 0\emat$. By Lemma\,\ref{kappacmplx} we have
\begin{gather*}
  \alpha_0\circ\gamma_{v_0}(t) = \bmat i & 0\\ 0 & \m e^{-2it}\\ 0 & 0\\ 0 & 0\emat
\end{gather*}
and, by the definition of the suspension, we have
\begin{gather*}
  \alpha_1\circ\gamma_{v_0}(t) = \bmat i & 0\\ 0 & \m e^{it}\\ 0 & 0\\ 0 & 0\emat.
\end{gather*}
In both cases, the horizontal lifts of these curves are contained in $\SU(3) \subset \Gtwo$,
since the first column is constant. It is easily verified that
\begin{gather*}
  \widetilde{\alpha_0\circ\gamma_{v_0}}(t) =
  \bmat e^{-2it} & 0 & 0 \\ \x 0 & \m e^{it} & 0\\ \x 0 & 0 & \m e^{it}\emat
\end{gather*}
and
\begin{gather*}
  \widetilde{\alpha_1\circ\gamma_{v_0}}(t) =
  \bmat e^{it} & 0 & 0 \\ 0 & \m e^{-it/2} & 0\\ 0 & 0 & \m e^{-it/2}\emat.
\end{gather*}
Hence,
\begin{gather*}
  \partial_{\Gtwo\to V_{7,2}}(\alpha_0)(v_0) = \widetilde{\alpha_0\circ\gamma_{v_0}}(\pi)
  = \bmat 1 & \m 0 & \m 0\\ 0 & -1 & \m 0\\ 0 & \m 0 & -1\emat
\end{gather*}
and
\begin{gather*}
  \partial_{\Gtwo\to V_{7,2}}(\alpha_1)(v_0) = \widetilde{\alpha_1\circ\gamma_{v_0}}(\pi)
  = \bmat -1 &  \m 0 & \m 0\\ \m 0 & -i & \m 0\\ \m 0 & \m 0 & -i\emat.
\end{gather*}
We now consider the homotopy
\begin{gather*}
  \partial_{\Gtwo\to V_{7,2}}(\alpha_0)(v_0)
    \cdot \bigl(\partial_{\Gtwo\to V_{7,2}}(\alpha_s)(v_0)\bigr)^{-1}
    \cdot \partial_{\Gtwo\to V_{7,2}}(\alpha_s).
\end{gather*}
This homotopy deforms $\partial_{\Gtwo\to V_{7,2}}(\kappa\circ\power^2)$ to
\begin{gather*}
  \bmat -1 &  \m 0 & \m 0\\ 0 & -i & \m 0\\ 0 & \m 0 & -i\emat\cdot
    \partial_{\Gtwo\to V_{7,2}}(N,\Sigma^2\tau)
  =  \bmat -1 &  \m 0 & \m 0\\ \m 0 & -i & \m 0\\ \m 0 & \m 0 & -i\emat\cdot
    \partial_{\SU(3)\to\Sph^5}(\Sigma\tau)
\end{gather*}
within the fixed $\Sph^3$-fiber over $(e_1,e_2) \in V_{7,2}$.
The last map fits perfectly to Theorem\,\ref{suhom}.

\bigskip

\section{Nontrivial maps from $\Sph^7$ to $\Syp(2)$ and exotic actions on $\Sph^7\times\Sph^3$}
\label{exotic}

\subsection{Nontrivial maps from $\Sph^7$ to $\Syp(2)$}
Consider $\Sph^7$ with north pole $N = (1,0,\ldots,0)$. As in section\,\ref{lifts} let $\gamma_v$
denote the geodesic of $\Sph^7$ (with respect to the standard metric on 	$\Sph^7$) with
$\gamma_v(0) = N$ and $\dot\gamma_v(0) = v$.
Let $D^7(\pi)$ denote the disk in the tangent space $T_N\Sph^7$ with radius $\pi$.
For a fixed but arbitrary $j\in \Z$ lift the geodesics $t \mapsto \gamma_v(12jt)$
horizontally with respect to the fibration $\Syp(2) \to \Sph^7$. This yields a map
$\xi_j : D^7(\pi) \to \Syp(2)$ with
\begin{gather*}
  \xi_{j\big\vert \Sph^6(\pi)} = \partial_{\Syp(2)\to \Sph^7}(\power^{12j}).
\end{gather*}
On the other hand, the null-homotopy of
\begin{gather*}
  \bigl(\partial_{\Syp(2)\to \Sph^7}(\id)\bigr)^{12j} = \partial_{\Syp(2)\to \Sph^7}(\power^{12j})
\end{gather*}
(see Corollary\,\ref{powerid} for this identity) provides us with a map
$\zeta_j : D^7(\pi) \to \Sph^3$ with 
\begin{gather*}
  \zeta_{j\big\vert \Sph^6(\pi)} = \partial_{\Syp(2)\to \Sph^7}(\power^{12j}).
\end{gather*}
We now define a map $\chi_j : D^7(\pi) \to \Syp(2)$ by
\begin{gather*}
  \chi_j(v) = \xi_j(v) \cdot \bmat 1 & 0 \\ 0 & \zeta_j^{-1}\emat.
\end{gather*}

\begin{lem}
The map $\chi_j$ induces a map $\Sph^7 \to \Syp(2)$ whose first column is the $12j$-th power
$\power^{12j}$ of $\Sph^7$. Hence, the map $\chi_j$ represents the $j$-th homotopy
class in $\pi_7(\Syp(2)) \approx \Z$.
\end{lem}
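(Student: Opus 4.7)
The plan is to verify three claims in turn: (i) $\chi_j$ is constant on the boundary sphere $\Sph^6(\pi)$, hence descends to a map $\Sph^7\to\Syp(2)$; (ii) the first column of the induced map equals $\power^{12j}$; and (iii) a short segment of the exact sequence of the fibration $\Sph^3\cdots\Syp(2)\to\Sph^7$ forces the homotopy class of $\chi_j$ to be $j$ times a generator of $\pi_7(\Syp(2))\approx\Z$.

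For (i) I would first invoke Corollary\,\ref{powerid} to rewrite
\begin{gather*}
\xi_j\big|_{\Sph^6(\pi)}=\partial_{\Syp(2)\to\Sph^7}(\power^{12j})=\bigl(\partial_{\Syp(2)\to\Sph^7}(\id)\bigr)^{12j}.
\end{gather*}
Each factor on the right has first column $\bsmat -1\\ 0\esmat$ and $(2,2)$-entry the quaternion $q(v)=-\tfrac{u}{\abs u}e^{\pi p}\tfrac{\bar u}{\abs u}$; since $12j$ is even, the matrix power is block-diagonal of the form $\bsmat 1 & 0\\ 0 & q(v)^{12j}\esmat$. By construction $\zeta_j\big|_{\Sph^6(\pi)}(v)=q(v)^{12j}$ as well, so right multiplication by $\bsmat 1 & 0\\ 0 & \zeta_j^{-1}\esmat$ cancels the $(2,2)$-entry and yields $\imat$ uniformly on the boundary. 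Hence $\chi_j$ descends to $D^7(\pi)/\Sph^6(\pi)\cong\Sph^7$.

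Claim (ii) is immediate: right multiplication by a matrix whose first column is $\bsmat 1\\ 0\esmat$ preserves the first column of $\xi_j(v)$, and by horizontality this first column is the endpoint of the curve $t\mapsto\gamma_v(12jt)$. Under the identification of $D^7(\pi)/\Sph^6(\pi)$ with $\Sph^7$ via the geodesic exponential at $N$, this endpoint is precisely the value of $\power^{12j}$ at the corresponding point.

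For (iii), the relevant portion of the long exact homotopy sequence is
\begin{gather*}
\Z_2\approx\pi_7(\Sph^3)\to\pi_7(\Syp(2))\stackrel{(p_1)_*}{\longrightarrow}\pi_7(\Sph^7)\approx\Z\stackrel{\partial}{\longrightarrow}\pi_6(\Sph^3)\approx\Z_{12},
\end{gather*}
in which $\partial$ sends the generator to a generator by Borel--Serre, hence is reduction mod $12$. Since $\pi_7(\Syp(2))\approx\Z$ is torsion-free, the map from $\pi_7(\Sph^3)$ vanishes and $(p_1)_*$ is injective with image $12\Z\subset\Z$. Combining this with $(p_1)_*[\chi_j]=[\power^{12j}]=12j$ (using $\deg\power^{12j}=12j$ on the odd sphere $\Sph^7$) shows that $[\chi_j]$ is $j$ times a generator of $\pi_7(\Syp(2))$.

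The delicate point I foresee is step (i), namely the bookkeeping that identifies the $\Sph^3$-valued null-homotopy $\zeta_j$ with the $(2,2)$-entry of the matrix-valued boundary $\xi_j\big|_{\Sph^6(\pi)}$. One must check that $12j$ being even places the target fiber over the \emph{north} pole, which produces the $(1,1)$-entry $+1$ in the block-diagonal form above, and that the null-homotopy $\zeta_j$ constructed via Theorem\,\ref{finalthm} really matches $q(v)^{12j}$ on the boundary under this identification of the fiber with $\Sph^3$.
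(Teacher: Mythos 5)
Your proof is correct and takes the same route as the paper, which simply asserts that $\chi_j$ is constant (equal to $\imat$) on $\Sph^6(\pi)$, that the first column of the induced map is $\power^{12j}$ of degree $12j$, and then invokes the exact homotopy sequence of $\Syp(2)\to\Sph^7$. Your steps (i)--(iii) fill in the bookkeeping the paper leaves implicit — in particular the block-diagonal form of $\bigl(\partial_{\Syp(2)\to\Sph^7}(\id)\bigr)^{12j}$ and its cancellation against $\zeta_j^{-1}$, and the identification of $(p_1)_*$ as an injection of $\pi_7(\Syp(2))$ onto $12\Z\subset\pi_7(\Sph^7)$.
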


\begin{proof}
By definition the map $\chi_j$ evaluates constantly to the north pole
on the boundary of $D^7(\pi)$. Hence, we get a map from $\Sph^7$ to $\Syp(2)$
whose first column is $\power^{12j}$, a map of degree $12j$. The last claim follows now
from the relevant part of the exact homotopy sequence of the bundle $\Syp(2)\to\Sph^7$.
\end{proof}

\subsection{Exotic actions on $\Sph^7\times\Sph^3$}
We now combine the maps $\chi_j$ above with the generalized Gromoll-Meyer
construction in \cite{dpr}. Let $E_n^{10}$ denote the $\Sph^3$-principal bundle obtained
by pulling back $\Syp(2)\to \Sph^7$ by the $n$-th power $\power^n$ of~$\Sph^7$:
\begin{gather*}
\begin{CD}
E_n^{10} @>>> \Syp(2) \\
@VVV @VVV \\
\Sph^7 @>{\power^n}>> \Sph^7.
\end{CD}
\end{gather*}
Explicitly,
\begin{gather*}
  E_n^{10} := \bigl\{ (u,v) \in \Sph^7\times \Sph^7\;\big\vert\;
    \llangle \power^n(u), v\rrangle = 0 \bigr\}
\end{gather*}
where $\llangle\,\cdot\,,\,\cdot\,\rrangle$ denotes the standard Hermitian inner product on
the quaternionic vector space $\H^2$. The total spaces $E_n^{10}$ come equipped with
a free action of the unit quaternions:
\begin{gather*}
  \Sph^3 \times E_n^{10} \to E_n^{10},\quad
  q \star (u,v) = (q u \bar q, q v).
\end{gather*}
Here, $qu\bar q$ means that the two quaternionic components of $u$ are
simultaneously conjugated by $q \in \Sph^3$.
The quotient of $E_n^{10}$ by the free $\star$-action is a smooth manifold
\begin{gather*}
  \Sigma^7_n := E_n^{10}/\Sph^3.
\end{gather*}

\begin{thm}[\cite{dpr}]
The differentiable manifold $\Sigma^7_n$ is a homotopy sphere and represents
the $(n\mod 28)$-th element in $\Theta_7\approx\Z_{28}$.
\end{thm}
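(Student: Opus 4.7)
The plan is to verify that $\Sigma^7_n$ is a smooth closed $7$-manifold, realize it as the total space of an $\Sph^3$-bundle over $\Sph^4$, and then identify the classifying data of that bundle to locate it in $\Theta_7\approx\Z_{28}$.

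First I would verify freeness of the $\star$-action. If $q\star(u,v)=(u,v)$ for some $(u,v)\in E_n^{10}$, then in particular $qv=v$ with $v\in\Sph^7\subset\H^2$, where $\Sph^3$ acts by left quaternionic multiplication on each quaternionic component. Since $v\neq 0$ at least one component is nonzero, which forces $q=1$. The action is smooth and proper (the acting group is compact), so $\Sigma^7_n$ is a smooth $7$-manifold, and $E_n^{10}\to\Sigma^7_n$ is a principal $\Sph^3$-bundle.

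Next I would exhibit $\Sigma^7_n$ as an $\Sph^3$-bundle over $\Sph^4$. The projection $(u,v)\mapsto u$ realizes $E_n^{10}\to\Sph^7$ as the pulled-back $\Syp(1)$-bundle. On the first factor the $\star$-action restricts to the conjugation action $u\mapsto qu\bar q$ of $\Sph^3$ on $\Sph^7\subset\H^2$, whose quotient is $\HP^1\approx\Sph^4$. This yields a commutative square whose vertical maps are $\Sph^3$-quotients; checking equivariance, computing the isotropies along the fixed real axis, and verifying that the residual fiber is $\Sph^3$ identifies $\Sigma^7_n\to\Sph^4$ as an $\Sph^3$-bundle.

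Finally I would invoke the classification of $\Sph^3$-bundles over $\Sph^4$ by $\pi_3(\SO(4))\approx\Z\oplus\Z$. Tracing the clutching data of $\Sigma^7_n\to\Sph^4$ through the pullback by $\power^n$ shows that the two characteristic integers depend affinely on $n$; Milnor's criterion then implies each total space is a homotopy $7$-sphere, and the Eells--Kuiper invariant $\mu\in\Q/\Z$ places $[\Sigma^7_n]$ as a linear function of $n$ in $\Theta_7\approx\Z_{28}$. The main obstacle is precisely this final step: tracking the Euler class and first Pontrjagin class of the bundle $\Sigma^7_n\to\Sph^4$ through the pullback/quotient construction accurately enough to verify that the resulting assignment $\Z\to\Theta_7$ is the standard one, and in particular that it sends $n$ to exactly $n\bmod 28$ (the case $n=1$ is the classical Gromoll--Meyer sphere, which fixes the normalization).
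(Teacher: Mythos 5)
The paper does not prove this theorem; it is stated as a citation to \cite{dpr} and used as a black box in the subsequent construction, so there is no in-paper argument to compare yours against.

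Your outline, however, contains a genuine error in the second step. You claim that the $\star$-action restricted to the first factor, $u\mapsto qu\bar q$, has quotient $\HP^1\approx\Sph^4$. This is false: simultaneous conjugation of the two quaternionic coordinates of $u=(u_1,u_2)\in\Sph^7\subset\H^2$ is \emph{not} a free action. If both $u_1,u_2$ are real the isotropy is all of $\Sph^3$; if $\Imag u_1$ and $\Imag u_2$ span a line in $\Imag\H$ the isotropy is a circle; generically it is $\{\pm 1\}$. Writing $u_i = x_i + p_i$ with $x_i\in\R$, $p_i\in\Imag\H\cong\R^3$, the action fixes $(x_1,x_2)$ and acts diagonally by $\SO(3)$ on $(p_1,p_2)$; the orbit space is parametrized by $(x_1,x_2,\abs{p_1},\abs{p_2},p_1\!\cdot p_2)$ subject to $\abs{p_1\!\cdot p_2}\le\abs{p_1}\abs{p_2}$ and the norm constraint, which is a stratified space with boundary and corners, not $\Sph^4$. (By contrast, $\HP^1$ is $\Sph^7$ modulo the \emph{free} right- or left-multiplication action of $\Sph^3$.) Consequently the commutative square you propose does not exhibit $\Sigma^7_n$ as an $\Sph^3$-bundle over $\Sph^4$, and the rest of your argument — Milnor's criterion and the Eells--Kuiper invariant — has no bundle to apply to.

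The map to $\Sph^4$ in the Gromoll--Meyer-type picture goes through the \emph{second} factor composed with the quaternionic Hopf map $h$: one checks $h(qv)=(\abs{v_1}^2-\abs{v_2}^2,\,2\,\overline{qv_1}(qv_2))=(\abs{v_1}^2-\abs{v_2}^2,\,2\bar v_1 v_2)=h(v)$, so $[(u,v)]\mapsto h(v)$ is well defined on $\Sigma^7_n$; identifying its fibers and clutching data is then the actual work. With your projection the construction does not get off the ground, so the proposal as written has a gap that is not merely computational.
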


Now the map $\chi_j: \Sph^7 \to \Syp(2)$ from the previous subsection supplies us
immediately with the section $\Sph^7 \to E^{10}_{12j}$, $u \mapsto (u,\chi_{j,2}(u))$
of the principal bundle $E^{10}_{12j} \to \Sph^7$. Here, $\chi_{j,2}(u)$ means the
second column of $\chi_j$. We obtain the trivialization
\begin{gather*}
  \Sph^7 \times \Sph^3 \to E^{10}_{12j}, \quad
  (u,r) \mapsto \bigl(u, \chi_{j,2}(u) r\bigr)
\end{gather*}
with inverse
\begin{gather*}
  E^{10}_{12j} \to \Sph^7 \times \Sph^3, \quad
  (u,v) \mapsto \bigl(u, \llangle \chi_{j,2}(u),v\rrangle\bigr).
\end{gather*}
This trivialization can be used to transfer the Gromoll-Meyer action from $E^{10}_{12j}$
to the product $\Sph^7\times\Sph^3$. This way we obtain the formula of
Theorem\,\ref{exoticaction} from the introduction.

\bigskip

\section*{Acknowledgements}
The author would like to thank A.~Rigas for his constant encouragement and advice throughout
this project and Carlos Duran for several valuable discussions.
The author was funded by a Heisenberg fellowship of Deutsche Forschungsgemeinschaft in the
years 2006--2008 and supported by the DFG priority programm SPP~1154.

\bigskip

%
%
\nocite{*}


\begin{thebibliography}{DMR}
%
\bibitem[AC]{arkowitz}
M.~Arkowitz, C.\,R.~Curjel, {\em Some properties of the exotic multiplications on the three-sphere},
Quart.\ J.\ Math. {\bf 20} (1969), 171--176.
%
\bibitem[BR]{barros}
T.\,E.~Barros, A.~Rigas, {\em The role of commutators in a non-cancellation phenomenon}, Math.\ J.\ Okayama Univ. {\bf 43} (2001), 73--93.
%
\bibitem[BH]{borel}
A.~Borel, F.~Hirzebruch, {\em Characteristic classes and homogeneous
spaces. II.}, Amer.\ J.\ Math. {\bf 81} (1959), 315--382.
%
\bibitem[BS]{serre}
A.~Borel, J.-P.~Serre, {\em Groupes de Lie et puissances reduites de Steenrod},
Amer.\ J.\ Math. {\bf 75}, (1953). 409--448.
%
\bibitem[Bt]{bott}
R.~Bott, {\em The space of loops on a Lie group},
Michigan Math.\ J. {\bf 5} (1958), 35--61.
%
\bibitem[Bd]{bredon}
G.\,E.~Bredon, {\em Topology and Geometry},
Graduate Texts in Mathematics 139, Springer, New York, 1993.
%
\bibitem[Br]{bryant}
R.~Bryant, {\em Minimizing cycles of codimension 3 in compact simple Lie groups},
preprint 2001.
%
\bibitem[CR1]{rigas}
L.\,M.~Chaves, A.~Rigas, {\em On a conjugate orbit of $G_2$}, 
Math.\ J.\ Okayama Univ. {\bf 33} (1991), 155--161.
%
\bibitem[CR2]{rigchav}
L.\,M.~Chaves, A.~Rigas, {\em Hopf maps and triality},
Math.\ J.\ Okayama Univ. {\bf 38} (1996), 197--208.
%
\bibitem[CR3]{chaves}
L.\,M.~Chaves, A.~Rigas, {\em Complex reflections and polynomial
generators of homotopy groups}, J.\ Lie Theory {\bf 6} (1996), 19--22.  
%
\bibitem[Du]{duran}
C.\,E.~Duran, {\em Pointed Wiedersehen metrics on exotic spheres and
diffeomorphisms of $\Sph^6$},
Geom.\ Dedicata {\bf 88} (2001), 199--210.
%
\bibitem[DMR]{duranrigas}
C.\,E.~Duran, A.~Mendoza, A.~Rigas, {\em Blakers-Massey elements and
exotic diffeomorphisms of $S^6$ and $S^{14}$},
Trans.\ Amer.\ Math.\ Soc. {\bf 356} (2004), 5025--5043.
%
\bibitem[DPR]{dpr}
C.\,E.~Duran, T.~P\"uttmann, A.~Rigas, {\em An infinte family of Gromoll-Meyer spheres},
Arch.\ Math. {\bf 95} (2010), 269--282.
%
\bibitem[HR]{hilton}
P.~James, J.~Roitberg, {\em Note on principal $S^3$-bundles},
Bull.\ Amer.\ Math.\ Soc. {\bf 74} (1968), 957--959.
%
\bibitem[Hu]{hu}
S.\,T.~Hu, {\em Homotopy theory}, Pure and Applied Mathematics VIII,
Academic Press, New York, 1959.
%
\bibitem[Ja]{james}
I.\,M.~James, {\em On $H$-spaces and their homotopy groups},
Quart.\ J.\ Math.\ Oxford Ser.~(2) {\bf 11} (1960), 161--179.
%
\bibitem[Ke1]{kervaire}
M.~Kervaire, {\em Some nonstable homotopy groups of Lie groups},
Illinois J.\ Math. {\bf 4} (1960), 161--169.
%
\bibitem[Ke2]{kervpont}
M.~Kervaire, {\em On the Pontryagin classes of certain ${\rm SO}(n)$-bundles
over manifolds}, Amer.\ J.\ Math. {\bf 80} (1958), 632--638.
%
\bibitem[Ko]{kollross}
A.~Kollross, {\it A classification of hyperpolar and cohomogeneity one actions},
Trans.\ Amer.\ Math.\ Soc. {\bf 354} (2002), 571-612.
%
\bibitem[Mm]{mimura}
M.~Mimura, {\em The homotopy groups of Lie groups of low rank}, 
J.\ Math.\ Kyoto Univ. {\bf 6} (1967), 131--176.
%
\bibitem[P{\"u}1]{habil}
T.~P\"uttmann, {\em Einige Homotopiegruppen der klassischen Gruppen aus geometrischer Sicht}, Habilitationsschrift, Ruhr-Universit\"at Bochum 2004.
%
\bibitem[P{\"u}2]{puttmann}
T.~P\"uttmann, {\em Cohomogeneity one manifolds and self-maps of nontrivial degree},
Transf.\ Groups {\bf 14} (2009), 225--247.
%
\bibitem[PR]{unitary}
T.~P\"uttmann, A.~Rigas, {\em Presentations of the first homotopy groups
of the unitary groups},
Comment.\ Math.\ Helv. {\bf 78} (2003), 648--662.
%
\bibitem[Ri]{rigastwo}
A.~Rigas, {\em $\Sph^3$-bundles and exotic actions},
Bull.\ Soc.\ Math.\ France {\bf 112} (1984), 69--92.
%
\bibitem[Ro]{rokhlin}
V.\,A.~Rokhlin, {\em Classification of mappings of an $(n+3)$-dimensional sphere into an $n$-dimensional one}, Doklady Akad.\ Nauk SSSR {\bf 81} (1951), 19--22.
%
\bibitem[Sa]{samelson}
H.~Samelson, {\em Groups and spaces of loops},
Comment.\ Math.\ Helv. {\bf 28} (1954), 278--287.
%
\bibitem[St]{steenrod}
N.\,E.~Steenrod, {\em The Topology of Fibre Bundles},
Princeton University Press 1951.
%
\bibitem[TSY]{saito}
H.~Toda, Y.~Saito, T.~Yokota, {\em A note on the generator of $\pi_7(\SO(n))$},
Mem.\ Coll.\ Sci.\ Uni.\ Kioto Ser. A {\bf 30} (1957), 227--230.
%
\bibitem[Wh]{whitehead}
G.\,W.~Whitehead, {\em On mappings into group-like spaces},
Comment.\ Math.\ Helv. {\bf 28}, 320--328.
%
\end{thebibliography}
\end{document}